\theoremstyle{plain}
\newtheorem{theorem}{Theorem}
\newtheorem{corollary}{Corollary}
\newtheorem{lemma}{Lemma}
\theoremstyle{definition}
\newtheorem{example}{Example}
\theoremstyle{remark}
\newtheorem{remark}{Remark}
\newcommand{\cp}{\mathop{\rm cap}}
\newcommand{\supp}{\mathop{\rm supp}}
\newcommand{\Res}{\mathop{\rm Res}}
\newcommand{\field}[1]{\mathbb{#1}} 
\newcommand{\R}{\field{R}} 
\newcommand{\N}{\field{N}}
\newcommand{\C}{\field{C}}
\newcommand{\MM}{{\mathcal M}}
\newcommand{\const}{{\rm const}}
\renewcommand{\Re}{\mathop{\rm Re}}
\renewcommand{\Im}{\mathop{\rm Im}}
\newcommand{\dist}{\mathop{\rm dist}}
\def\XXint#1#2#3{{\setbox0=\hbox{$#1{#2#3}{\int}$}
\vcenter{\hbox{$#2#3$}}\kern-.5\wd0}}
\title{{Phase transitions and equilibrium measures in random matrix models}}
 \author{A. Mart\'{\i}nez--Finkelshtein, R. Orive and E. A. Rakhmanov}
\date{\today}
\begin{document}

%
%

\maketitle

\begin{abstract}
The paper is devoted to a study of phase transitions in the Hermitian random matrix models with a polynomial potential.
In an alternative equivalent language, we study families of equilibrium measures on the real line in a polynomial external field.
The total mass of the measure is considered as the main parameter, which may be interpreted also either as temperature or time. Our main tools are differentiation formulas with respect to the parameters of the problem, and a representation of the equilibrium potential in terms of a hyperelliptic integral. Using this combination we introduce and investigate a dynamical system (system of ODE's) describing the evolution of families of equilibrium measures. On this basis we are able to systematically derive a number of new results on phase transitions, such as the local behavior of the system at all kinds of phase transitions, as well as to review a number of known ones.
\end{abstract}

\tableofcontents

\section{Introduction} \label{intro}

This paper is devoted to a study of families of equilibrium measures on the real line in the polynomial external fields. We consider these measures as  functions of a parameter  $t$ representing either the total mass,  time or temperature. These families are regarded as models of many physical processes, which motivates their intense study in the context of the mathematical physics. But equilibrium problems for the logarithmic potential play an important role also in analysis and approximation theory; in particular, they provide a general method used in the theory of orthogonal polynomials. In this sense, the subject is essentially ``bilingual''  and has so many ramifications that we opted for writing an extended introduction, recreating in part a broader context, instead of a dull enumeration of our results.

We start with Statistical Mechanics in Section~\ref{sectionStatMech}, which we consider our main field of interest inside the Mathematical Physics.
In Section~\ref{subsect:equilibrium_analytic} we formally introduce basic facts related to the equilibrium measure  and mention  some applications in analysis and approximation theory. In Section~\ref{subsec:KdV} we analyze briefly some connections between this work and the equilibrium problems arising in integrable systems theory,  returning at the end to the topic of the Coulomb gas and random matrices. In the course of these discussions  along Section~\ref{intro} we  describe, in general terms, the main results of the paper and its structure. 

Several modifications of an essentially identical extremal problem for the logarithmic potential were introduced in the three fields of mathematics mentioned above, independently and almost simultaneously within few years around 1980. 
The fundamental importance of this problem and of its solution (the equilibrium measure) is nowadays a common knowledge. The intrinsic unity of the underlying potential theory is also known to the specialists. However, 
researchers working in approximation theory, solitons and integrable systems, or random matrices,  are often not aware of all details about related developments in the other disciplines\footnote{This statement, obviously, applies also to the authors of the current paper.}. 

 One of our main goals is a formal investigation of the dependence of the key parameters of the equilibrium measure on the real line from the parameter $t$, and in particular, of their singularities as functions of $t$. In thermodynamical terms, these singularities are closely related to phase transitions, and we start the introduction with the discussion of this problem in the context of statistical mechanics and thermodynamics. It does not mean that we actually intend to interpret our results in any nonstandard way. We want, on the contrary, to make a short review of a few existing standard interpretations, which could be helpful during ``translations'' in our bilingual area. 

We have to point out as a conclusion of the remarks above that the problem of the equilibrium measure at large is much wider than the part under consideration in this  paper. For instance, Riemann and Klein used the (logarithmic) electrostatic models in the theory of algebraic functions and conformal mappings. Even earlier, Gauss, who was in part influenced by Euler and Lagrange, introduced his related variational principle. Other variational aspects of the problem are essentially overlapping with extremal problems of the geometric function theory (Teichm\"uller, Shiffer, and others). Many other great mathematicians made important contributions related to the problem. Our paper is devoted to one particular (though important) problem. Establishing the historical development of each result is a task beyond our possibilities, and  we apologize for possible involuntary omissions.

\subsection{Statistical mechanics}\label{sectionStatMech}

Determinantal random point processes (or self-avoiding point fields) are pervasive in statistical mechanics, probability theory, combinatorics, and many other branches of mathematics \cite{MR2932631}, \cite{Borodin:2012fk}, \cite{MR1754518}, \cite{MR2581882}, \cite{Soshnikov00}, \cite{MR2858436}. Among the examples of such processes we can mention 2-D Fermionic systems or Coulomb gases, Plancherel measures on partitions, two-dimensional random growth models, random non-intersecting paths, totally-asymmetric exclusion processes (TASEP), quantum Hall models, and random matrix models, to mention  a few. Many of these models lead to the so-called \emph{orthogonal polynomial ensembles} \cite{MR2203677}, \cite{MR2858436}, an important subclass of determinantal random point processes. The large scale behavior of such ensembles is described in terms of the asymptotics of the underlying family of orthogonal polynomials, and in the last instance, by the related equilibrium measure solving an extremal problem from the potential theory.

One of the earliest and probably best known examples of orthogonal polynomial ensembles is the joint distribution of the eigenvalues of a random $N\times N$ Hermitian matrix drawn from a unitary ensemble (see \cite{MR0143556}). More precisely, we endow the set of $N\times N$ Hermitian matrices
$$
\left\{ M=\left( M_{jk}\right)_{j,k=1}^N:\; M_{kj}=\overline{M_{jk}} \right\}
$$
 with the  joint probability distribution 
$$
d\nu_N(M)=\frac{1}{\widetilde Z_N}\, \exp\left( -  \mathrm{Tr} \, V(M)\right)\, dM, \qquad 
dM=\prod_{j=1}^N dM_{jj}\prod_{j\not=k}^N d\Re M_{jk}d\Im M_{jk},
$$
where $V : \, \R \to \R$ is a given function with enough increase at $\pm \infty$ to guarantee the convergence of
the integral in the definition of the normalizing constant
$$
\widetilde Z_N = \int \exp\left( -  \mathrm{Tr} \, V(M)\right)\, dM.
$$
Then $\nu_N$ induces a (joint) probability distribution $\mu_N$ on the  eigenvalues $\lambda_1<\dots <\lambda_N$ of these matrices, with the density 
\begin{equation}
\label{Eigdensity}
\mu_N' (\bm \lambda)  = \frac{1}{ Z_N}\, \prod_{i<j} (\lambda_i-\lambda_j)^2 \exp\left( -  \sum_{i=1}^N V(\lambda_i)\right) ,
\end{equation}
and with the corresponding \emph{partition function}
$$
Z_N=\int_\R\dots\int_\R \, \prod_{i<j} (\lambda_i-\lambda_j)^2 \exp\left( -  \sum_{i=1}^N V(\lambda_i)\right) \, d\lambda_1\dots d\lambda_N.
$$
This result can be traced back to a classical theorem by  H.~Weyl, see  \cite{MR1488158} and also \cite{Mehta2004}. Notice that $\nu_N$ is invariant under unitary transformations of the Hermitian matrices, while measure $\mu_N$ defines the equilibrium statistical mechanics of a Dyson gas of $N$ particles with positions $\lambda_j$ on $\R$ in the potential $V$, where they interact by the repulsive electrostatic potential of the plane (the logarithmic potential). On the other hand,  
\eqref{Eigdensity} is also the joint probability density function of an $N$-point orthogonal polynomial ensemble with the weight $w(x) =\exp(-V (x))$ (see e.g.\ \cite{MR2000g:47048}, \cite{MR2203677}).
The \emph{free energy} of this matrix model is defined as 
\begin{equation*} 
F_N=-\frac{1}{N^2}\log Z_N.
\end{equation*}
Regardless its interpretation, a particularly important problem is to analyze its asymptotic behavior in the thermodynamic limit, i.e.~as $N\to\infty$.

A rather straightforward fact, the existence of the limit
\begin{equation*} 
F_\infty=\lim_{N\to\infty}F_N
\end{equation*}
(infinite volume free energy) has been established  under very general conditions on $V$, see e.g.~\cite{MR1487983} and Section \ref{subsect:equilibrium_analytic}  below. An important property of the infinite volume free energy $F_\infty$ is its analyticity with respect to the parameters of the problem. The values of the parameters at which the free energy is not analytic are the \emph{critical points}; curves of discontinuity of some derivatives of the free energy connecting the critical points divide the parameter space into different phases of the model. Thus, critical points are points of phase transition \cite{Yeomans}, and they are going to be a center of our case study.

The observation that the distribution \eqref{Eigdensity} can be regarded as the Gibbs ensemble on the Weyl chamber $\{\bm{\lambda}:\, \lambda_1<\dots <\lambda_N\}$ with Hamiltonian
\begin{equation} \label{discreteIntro}	
\sum_{i< j} \log\frac{1}{|\zeta_i-\zeta_j|} + \frac{1}{2} \sum_{j=1}^N V(\lambda_j)
\end{equation}
allows to foretell the fundamental fact  that the value of $F_\infty$ is given by the solution of a minimization problem for the weighted logarithmic energy. The corresponding minimizer is the \emph{equilibrium measure} associated to the  problem\footnote{We have gathered the related definitions and basic facts in Section \ref{subsect:equilibrium_analytic}.}. 
This measure, which is a one-dimensional distribution on $\R$, is also a model for the limit distribution of the eigenvalues $\lambda_1<\dots <\lambda_N$ in \eqref{Eigdensity}. Indeed, the multidimensional probability distribution $\mu_N$  is concentrated  for large $N$ near a single point $\bm \lambda^* =(\lambda_1^*, \dots \lambda_N^*) \in \R^N$, which is actually the minimizer for the corresponding  discrete energy \eqref{discreteIntro} of an $N$-point distribution. In other words, for large $N$ the measure $\mu_N$  is close to  $\delta(\bm \lambda  - \bm \lambda ^*)$. As $N\to\infty$, the discrete equilibrium measure converges to the continuous one, so that the thermodynamic limits are essentially described by the (continuous) equilibrium measures (see Section \ref{sec:equilibriuminanalysis}). 

 When $V$ is real-analytic, the support  of such a measure (or the asymptotic spectrum of the corresponding unitary ensemble) is comprised of a finite number of disjoint intervals (or ``cuts''), see~\cite{MR2000j:31003}, and the number of these intervals is a fundamental parameter. For instance, the free energy $F_N$ has a full asymptotic expansion in powers of $N^{-2}$ (``topological large $N$ expansion'') if and only if this support is a single interval; otherwise, oscillatory terms are present (this was observed in \cite{MR1790279}, and studied systematically in \cite{MR2187941}, \cite{Ercolani:2003fk}, \cite{MR2495713}). 
 
The particularly interesting  phenomena occur precisely in the neighborhood of the  values of the parameters at which the number of the connected components  of the support of the equilibrium measure changes. Any change in the number of cuts is a phase transition in the sense specified above, but there are also phase transitions of other kinds (not related to a change in the number of cuts): see Sections \ref{subsec:classification} and \ref{sec:phasetrans} for details.
 
We specialize our analysis to the polynomial potential. It is convenient to write $V$ in the form 
\begin{equation} \label{limitbehavior1}	
V(x)=V_{n }(x)= 2 n \, \varphi(x),  
\end{equation}
where $\varphi$ is a polynomial of an even degree and positive leading coefficient. This case is of great interest, see e.g.\ \cite{MR2629605}, \cite{MR1986409}, \cite{Bleher99}, \cite{MR1327898}, \cite{MR1744002}, to cite a few references. A common  situation is when $n, N\to \infty$ in such a way that
\begin{equation} \label{limitbehavior2}	
\lim \frac{N}{n}=t >0.
\end{equation}
Although the coefficients of $\varphi$ (``coupling constants'' of the model) play the role of the variables in the problem, the parameter  $t>0$ stands clearly out, as it was already mentioned above. Recall that it can be regarded either as a temperature (from the point of view of statistical mechanics) or time (from the perspective of a dynamical system), and will correspond to the total mass of the  equilibrium measure in the external field $\varphi$ on $\R$.
  
One of the goals of this paper is the description of the evolution of the limiting spectrum of the unitary ensemble for \eqref{limitbehavior1}--\eqref{limitbehavior2}  as time (temperature) $t$ grows from zero to infinity, paying special attention to the mechanisms underlying the increase (``birth of a cut'') and decrease (``fusion of two cuts'' or ``closure of a gap'') in the number of its connected components. Equivalently, we study one-parametric families of equilibrium measures on $\R$ in a polynomial external field with the total mass $t$ of the measure as the parameter. 

This problem has been addressed in several publications before, here we only mention a few of them. For instance, \cite{MR1986409} studied the fusion of two cuts for the quartic potential $\varphi$, proving the phase transition of the third order for the free energy (in other words, that $F_\infty \in C^2\setminus C^3$). This work found continuation in \cite{MR2629605}, \cite{MR2453314}, where in particular the birth of the cut in the same model was observed (but not proved rigorously). It turns out actually that the third order phase transition of the infinite volume free energy is inherent to all possible transitions existing in this model.

The evolution of the support of the equilibrium measure in terms of the parameters of the polynomial potential has been studied also in \cite{MR2350906}, \cite{MR2262808}, \cite{MR2247930},  \cite{McLeod}, \cite{MR3099797}, where a connection with some PDE's and integrable systems have been exploited. In \cite{MR1744002} the analytic dependence of the endpoints of the support from the total mass for real analytic weights has been proved, among some other results (see  Section~\ref{sec:critical} below).

However, despite of this intense activity, the picture is not complete; even the monographic chapter \cite{MR2932634} contains imprecisions. Open questions exist actually for the first non-trivial case of a quartic field, and this paper adds a number of new details to this particular situation; some of them seem to be significant. For instance, we present a simple characterization of the case when the equilibrium measure has one cut \emph{for all} values of the parameter $t>0$, or when a singularity of type III (see the definition in Section \ref{sec:revisited}) can occur. We also study in more detail the system of differential equations governing the dynamics of the endpoints of the support of the equilibrium measure. In particular, we analyze the behavior of the infinite volume free energy and of other magnitudes of the system near the singular points, revealing some interesting universal properties. 

Finally, but not less important, we show that all the known and new results related to the outlined problem can be systematically derived from two basic facts in the potential theory. One of them is a representation for the Cauchy transform of the equilibrium measure. This representation can be obtained as a corollary of the fact that on the real line any equilibrium measure in an analytic field is a critical measure, which allows us to apply a variational technique presented in \cite{GR:87}, and systematically in \cite{MR2770010} and \cite{Rakhmanov2012}.  Another fact is a Buyarov--Rakhmanov  differentiation formula \cite{Buyarov/Rakhmanov:99} for the equilibrium measure with respect to its total mass and some of its immediate consequences, which we complement with a unified treatment of the differentiation formulas with respect to any coupling constant. 
 We provide further  details in Section \ref{sec:critical}.

\subsection{Equilibrium measure in an analytic external field on $\R$} \label{subsect:equilibrium_analytic}

In this section we introduce basic notation and mention a number of fundamental facts on the equilibrium measures on the real line, necessary for the rest of the exposition. For more details the reader can consult the original papers  \cite{Buyarov/Rakhmanov:99}, \cite{Gonchar:84}, \cite{MR2770010}, \cite{Mhaskar/Saff:85}, \cite{MR638916}, \cite{MR675192} and the monograph \cite{Saff:97}, or, from a slightly different perspective, \cite{Lax/Levermore}.

For a finite Borel measure $\sigma$ with compact support $\supp(\sigma)$ on the plane  we can define its
\emph{logarithmic potential}
$$
 V^{\sigma}(z)=  -\int \log |z-x|\,d\sigma (x),
$$
and its \emph{logarithmic energy},
$$
I[\sigma]= -\iint \log |z-x|\,d\sigma (x)d\sigma (z).
$$
Suppose further that a real-valued function $\varphi$, called the \emph{external field}, is defined on $\supp(\sigma)$. Then we introduce the \emph{total} (or ``chemical'') \emph{potential},
\begin{equation} \label{totalPotential}
W_\varphi^{\sigma}(z) =  V^{\sigma}(z)+\varphi(z),
\end{equation}	
(defined at least where $\varphi$ is) and the \emph{total energy},
\begin{equation} \label{totalcontEnergy}
I_\varphi [\sigma]= I [\sigma] + 2\int \varphi(z) \, dz,
\end{equation}	
respectively. 
This definition makes sense for a very wide class of functions $\varphi$, although for the purpose of this paper it is sufficient to consider basically real-analytic, actually polynomial, external fields on the real axis. As usual, we assume also that 
\begin{equation} \label{cond2}
\varliminf\limits_{|x|\to
+\infty}\varphi(x)/\log |x|=+\infty,
\end{equation}	
condition that is automatically satisfied for any real non-constant polynomial of even degree and positive leading coefficient.  
Then for each $t>0$ there exists a unique measure $\lambda_t=\lambda_{t}(\varphi ,\R)$ with compact support  $S_t =\supp(\lambda_t)$,  minimizing the total energy
\begin{equation}\label{weightedenergy}
I_{\varphi  }[\lambda_t]= \min_{\sigma \in \MM_t } I_{\varphi  }[\sigma]
\end{equation}
in the class $ \MM_t$ of positive Borel measures $\sigma$ compactly supported on $\R$ and with total mass $t$ (that is, $\sigma (\R)=t$).  
Moreover, $\lambda_t\in \MM_t$ is completely determined by the equilibrium condition satisfied by the total potential
\begin{equation}\label{equilibrium1}
W_\varphi^{\lambda_t}(z) \, \begin{cases} =\,c_{t},
&    z\in \supp (\lambda_t)\,, \\
\geq \,c_{t}\,,\, &   z\in \R \,,
\end{cases}
\end{equation}
where the \emph{equilibrium} or \emph{extremal constant} $c_t$ can be written as 
\begin{equation} \label{extremalConst}
c_{ t}=c_t(\varphi)
:= \frac{1}{t}\left( I_{\varphi }[\lambda_t]-\int \varphi \, d\lambda_t \right).
\end{equation}	

When  $K$ is a compact  set on $\R$ and  the external field $\varphi$ is
$$
\varphi(x) =\begin{cases} 0, & \text{if } x\in K, \\
+\infty , & \text{otherwise},
\end{cases}
$$
the corresponding energy minimizer $\lambda_{t}(\varphi ,\R)$  in the class of all probability measures $\mathcal M_1$, denoted by $\omega_K$,  is known as the \emph{Robin measure} of $K$, its energy   is the \emph{Robin constant}   of $K$,
\begin{equation} \label{robin}	
\rho(K) = I[\omega_K],
\end{equation}
 and the logarithmic capacity of $K$ is given by $ \cp (K)=\exp(-\rho(K))$. 
Measure $\omega_K$  is characterized by the  fact that  $\supp(\omega_K)=K$ together with the equilibrium condition
\begin{equation} \label{equilibriumRobin}	
V^{\omega_K}(z) \equiv \rho (K), \quad z \in K.
\end{equation}

For a general external field the main technical problem when finding the equilibrium measure is that its support is not known a priori and has to be established from the equilibrium conditions. Once the support is determined, the measure itself can be recovered from the equations presented by the equality part in \eqref{equilibrium1}, which is an integral equation with the logarithmic kernel. After differentiation it is reduced to a singular integral equation with a Cauchy kernel whose solution (on reasonable sets) has an explicit representation. 

Hence, solving the support problem is the key, and it is essentially more difficult.  

 In the one cut case, its endpoints are determined by relatively simple equations, but in general, straightforward solutions are not available. For a polynomial (actually, real analytic) field the support $S_t$ is a union of a finite number of intervals. Equations may be written for the endpoints of those intervals (and we better know the number of intervals in advance), but these equations are not easy to deal with. For instance, in the case of the polynomial field they may be interpreted as systems of equations on periods of an Abelian differential on a hyperelliptic Riemann surface (see e.g.~Remark \ref{remark:Riemann} in Section~\ref{subsec:variations}), but such systems are usually far from being simple.  In this context, our approach is  based on the dynamics of the family $S_t$. 
 As we have mentioned above, one of our goals is to describe the dynamics of the family of supports  $S_t$  as the mass $t$ (which is also ``time'', or according to \cite{MR1986409}, the ``temperature'') changes from $0$ to $+\infty$. The detailed discussion  starts in Sections \ref{sec:critical} and \ref{sec:polyn} below. 
  
Observe finally that a study of the family $\lambda_t$ with total mass $t$ as a parameter in a fixed external field $\varphi$ may be reduced  by a simple  connecting formula to the family of unit equilibrium measures with respect to the family of fields $\frac1t\varphi$. We have
\begin{equation} \label{homogeneity}
c_t(\varphi)=t \, c_1\left( \frac{1}{t}\, \varphi \right), \quad \lambda_t(\varphi)=t \, \lambda_1\left( \frac{1}{t}\, \varphi \right).
\end{equation}	
In some cases such a reduction is reasonable, but more often it brings more problems than benefits. In particular, we believe that the total mass as a parameter has particular advantages in the problem under consideration.

\subsection{Weighted equilibrium in Analysis} \label{sec:equilibriuminanalysis}
 
The significant progress in the theory of rational approximation of analytic functions (Pad\'e-type approximants) and in the related theory of orthogonal polynomials in the 1980's is due in part to the logarithmic potential method; see the original papers  \cite{MR651757}, \cite{Gonchar:84}, \cite{MR2961473}, \cite{Komlov12}, \cite{MR916090}, \cite{Mhaskar/Saff:85}, \cite{MR769985}, \cite{MR0487173}, \cite{MR638916}, \cite{MR675192}, \cite{MR1322292}, \cite{MR81i:41016}, \cite{Stahl:86} and the monographs \cite{MR1130396}, \cite{Saff:97}, \cite{MR93d:42029}, although the list is not complete. A novel important ingredient of the techniques developed during that period was precisely the introduction of the weighted equilibrium measure (among other types of equilibria). We mention first  two prototypical situations where equilibrium measures are used to study certain forms of asymptotics  of orthogonal polynomials.
 
One scenario where the family of equilibrium measures parametrized by the total mass becomes a natural method of solution is the problem of the rate of convergence in the classical Stieltjes' theorem on Pad\'e approximants to an asymptotic series whose coefficients are moments of a given weight $w$ on the real line. Using standard arguments  this question is reduced to the problem of exterior logarithmic asymptotics of orthonormal polynomials 
$Q_N(x) = Q_N (x;w)= k_N\,  x^N+\dots$, satisfying
\begin{equation}
\label{defOP}
Q_N = k_N x^N+\dots,\quad \quad\int_\R Q_N(x) x^k w(x) dx= 0, \quad \quad k=0, 1, \dots, N-1,
\end{equation}
and where the leading coefficient $ k_N>0$ is defined by the normalization condition
\begin{equation*}
\|Q_N\|^2 =\int_\R Q_N^2(x) w(x) dx= 1.
\end{equation*}

With this normalization  it was shown  in \cite{MR638916}, \cite{MR675192} (see also \cite{Lubinsky88} and \cite[Ch.~VII]{Saff:97})
that if the weight has the form 
$$w (x)= e^ {-2\varphi (x)(1+\varepsilon (x))}\quad \text{with} \quad \varphi (x) = |x|^\rho, \quad \rho>1, $$   
and where $\varepsilon(x)\to 0$ as $|x|\to\infty$, then as $N\to \infty$ and for $z\in \C \setminus \R $ we have
\begin{equation*}
\log|Q_N(z)| \sim c_n - V^{\lambda_N}(z) \sim D(\rho) N^{1- \frac1\rho} |\Im z|,  
\end{equation*}
where $\lambda_N = \lambda_{N}(\varphi)$ and $D$ is a certain explicit function. The notation $F_N(z)\sim G_N(z)$, $z\in \Omega$, means that  $F_N/G_N \to 1$ as $N\to\infty$ uniformly in $z\in\Omega$. The last expression above characterizes the desired rate of convergence of the Pad\'e approximants to the Cauchy transform of $w$, while the middle expression is due to the fact that the final result was obtained by reduction of the original problem to a study of a family of equilibrium measures. In particular, the normalized equilibrium potential turned out to be a good approximation to  $\log|Q_N|$. The parameter $N = \deg Q_N$ is discrete, in accordance to the nature of the problem. The result was immediately generalized to a wider class of fields $\varphi$; we will go into some further details related to the case in Section~\ref{sec:critical} below when we study family $\lambda_{t}(\varphi)$ more systematically.

Another class of problems  leading to a family of unit equilibrium measures with external fields depending on a parameter is the one of the zero distribution for orthogonal polynomials with variable weights.
We mention (in a simplified form) one basic result from  \cite{Gonchar:84} (see also \cite{MR916090}, \cite{Mhaskar/Saff:85}, \cite{Saff:97}): let  $\varphi$ be a continuous external field,  $n, N\in \N$, and  let polynomial $Q_N(x)=Q_{N,n}(x)=x^N+\dots$ be defined by \eqref{defOP}  with $w(x)= w_n(x) = e^{-2n \varphi(x)}$ and normalized this time by $k_N = 1$.
This polynomial is also characterized by the extremal property
\begin{equation}
\label{extremalProp}
m_N:=\int_\R Q_N^2(x) e^{-2n \varphi(x) } dx=\min_{P(x)=x^N+\dots}\int_\R P^2(x) e^{-2n \varphi(x) } dx.
\end{equation}
Now, if $n, N\to \infty$ in such a way that $N/n\to t>0$,  then 
\begin{equation} \label{weakConvergence}	
\lim  \frac{1}{2N}    \log (m_N ) =  c_t, \qquad 
\frac{1}{N}\, \sum_{Q_N(\zeta)=0}\delta_\zeta \stackrel{*}{\longrightarrow} \lambda_t(\varphi),
\end{equation}
where $\stackrel{*}{\longrightarrow}$ is the weak-$*$ convergence. 

The two problems mentioned above  are closely related; to a certain extend, they are actually equivalent. The second scenario may be interpreted as a ``compactification'' of the first one. Such a compactification is in general achieved by a ``scaling'' or ``contraction'', that is a substitution $x' = r x$ with a proper scaling factor $r = r_{n}$. In general, it is a local procedure ($r_n=r_n(x)$). In the case of the fields $\varphi (x) = |x|^\rho$ the substitution is $r = C n^{1/\rho}$. Yet, there is a difference, which is relevant  for our purposes. Depending on the situation one of the forms may be more technically convenient. In this paper we use specifically the first (non compact) form (the different normalization of the polynomials is not  significant).

Notice also that from the point of view of the spectral theory, zeros of $Q_N$ in the second problem are eigenvalues of the truncated $N\times N$ Jacobi matrices associated with the weight  $e^{-2n \varphi }$, and \eqref{weakConvergence} shows that $\lambda_t(\varphi)$ can be naturally interpreted as the limit spectrum of the infinite Jacobi matrix, associated with the scaling $N/n\to  t$. For details related to applications of equilibrium measures in spectral theory of discrete Sturm-Liouville operators see \cite{MR2350223}.

The examples above represent a general fact  that the zero distribution of extremal polynomials (like in \eqref{extremalProp}) is determined by a related equilibrium measure. Another large circle of applications of (continuous) equilibrium measures $\lambda_t(\varphi)$ is related to the discrete analogue of this notion. In particular, in many important cases polynomials whose zeros minimize the discrete energy satisfy linear differential equations.
More specifically,  let
$$
 M_n:=\left\{ \mu=  \sum_{k=1}^n \delta_{\zeta_k} , \; \zeta_k\in \R, \; k=1, \dots, n \right\} \subset \mathcal M_n
$$
denote the set of all point mass measures on $\R$ of total mass $n$. 
The corresponding discrete energies are defined by
$$
E[\mu]:=   \sum_{i\neq j} \log\frac{1}{|\zeta_i-\zeta_j|},
\quad E_\varphi[\mu]:= E[\mu]+2\int \varphi\, d\mu
$$
(cf.\ \eqref{totalcontEnergy}).

Let $\mu_n=\mu_n(\varphi )\in  M_n$ be a minimizer  (not necessarily unique) for $E_\varphi[\mu]$ in the class $M_n$; points $\{\zeta_{1,n}, \dots, \zeta_{n,n}\}=\supp (\mu_n)$ are also known as \emph{weighted Fekete points}. We have
$$
\frac{1}{n}\, \mu_n(\varphi  ) \stackrel{*}{\longrightarrow} \lambda_1(\varphi), \quad E_\varphi\left[\frac{1}{n}\, \mu_n\right] \longrightarrow I_\varphi[ \lambda_1] \quad \text{as } n\to \infty;
$$
for the proof, see e.g.~\cite{Saff:97}, where some applications of Fekete points in approximation theory are also discussed. Furthermore, for a class of external fields $\varphi$, the polynomials 
$$
H_n(x)=\prod_{k=1}^n (x-\zeta_{k,n})
$$
satisfy a second order differential equation with polynomial coefficients and constitute an example of the so-called Heine-Stieltjes polynomials; see   \cite{MR2149266}, \cite{MR2647571}, \cite{MR2770010}, \cite{MR1928260}, \cite{MR2727638}, \cite{Shapiro2008a}, \cite{MR2902190} for further details on the algebraic and analytic properties of these polynomials. As it was shown in \cite{MR1928260}, the limit zero distribution of such polynomials in the classical case studied by Stieltjes \cite{Stieltjes1885} is governed again by the (continuous) one-parametric family of equilibrium measures. Also a related work worth mentioning is \cite{Bertola:2006fk}, where deformation with respect to the parameters of the external field were considered. 

We do not discuss more general equilibrium problems in the context of approximation theory. In particular, we do not mention ``approximational'' applications of constrained and Green's equilibrium problems; we refer the interested reader to the original papers \cite{MR0496592}, \cite{GR:87}, \cite{Stahl:86}, and the reviews \cite{MR2963451} and \cite{Rakhmanov2012}. Instead, in the next section  we  mention briefly one important class of such equilibrium problems, which plays a fundamental role in the soliton theory.

\subsection{Constrained Green's equilibrium problems in soliton theory} \label{subsec:KdV}

In the classical papers \cite{Lax/Levermore}, Lax and Levermore  investigated the problem  of  the small dispersion asymptotics $\varepsilon\to 0+$ for a solution $u(x,t,\varepsilon)$ of the Korteweg--de Vries (KdV) equation
\begin{equation*}
u_t -6u u_x +\varepsilon^2 u_{xxx}=0, \quad \varepsilon>0,\quad u,x\in\R,\quad t\in \R_+ ,
\end{equation*}
with the initial data $u(x) = u(x,0,\varepsilon)$ satisfying conditions
\begin{equation*}
 u(x) \in [-1,0) ,\, x\in \R ,\quad \lim_{|x|\to\infty}u(x) = 0 ,\quad \int_\R | u(x)|(1+x^2) <\infty.
\end{equation*}

As $\varepsilon\to 0$, the solution becomes oscillating for  $t\geq t_0$, where $t_0$ does not depend on $\varepsilon$, with the wave length $\mathcal O(\varepsilon)$ and the amplitude  not depending asymptotically on $\varepsilon$. In other words, there exists a weak limit  as $\varepsilon\to 0$. 
It was proved in \cite{Lax/Levermore} that this limit   
can be written in terms of the function $Q^*_{xx}(x,t)$, where 
\begin{equation*}
Q^*(x,t) = \min  \left \{Q(\psi, x, t): \, 0\leq \psi\leq \phi/\pi \right\},
\end{equation*}
\begin{equation}
\label{GreenEn}
Q(\psi, x,t) = \iint_{[0,1]^2} \log \left| \frac {\eta + \tau}{\eta - \tau} \right|\psi (\eta) \psi(\tau)\,d\eta d\tau
 + 2 \int_{[0,1]} a(\eta, x,t))\psi (\eta)\, d\eta\, ,
\end{equation}
and $a(\eta, x,t) =  x\eta  - 4 t\eta^3  - \theta(\eta)$. Here $\phi (\eta)$ and $\theta (\eta)$ are two analytic functions of $\eta\in [0,1]$, not depending on $x$, $t$, which are determined explicitly in terms of the initial data $u $ 
(we basically keep the notations and the form of the original paper).

The minimizer $d\mu^*(x,t) = \psi^*(x,t) d\eta$ in \eqref{GreenEn} is determined under the ``finite-gap ansatz'': the assumption that its support  is a union of a finite number of intervals. The endpoints $u_k(x,t)$ of these intervals  are the main parameters in the explicit formulas. The compatibility condition $\partial_t\,\psi_x^* = \partial_x\,\psi_t^*$ yields a system of PDE for $u_k(x,t)$, valid in any $(x,t)$-subdomain where the number of connected components of the support is preserved.  These equations are related to the so-called \emph{Whitham equations} or equivalently, \emph{modulation equations} \cite{Flaschka80} (see also \cite{MR2061477}). 

The inverse scattering transform (IST) method created in \cite{Lax/Levermore} was further developed in \cite{Venakides85} (where the original assumption that  $u(x)$ has a single critical point was removed), and in subsequent papers \cite{Deift93}, \cite{Deift97a}, \cite{Deift97b}, \cite{Deift98}, \cite{MR2061477}, \cite{Kamvissis}, \cite{MR2044068}, \cite{MR2376209}, where the method was extended to wider classes of equations  and to more general settings. 
In particular, it was used in the analysis  of the Toda lattices, that is,   a system of difference-differential equations of the form
\begin{equation*}
\dot{a_k} = 2(b_k^2-b_{k-1}^2),\; k=1,\dots,n; \qquad \dot{b_k} = b_k(a_{k+1}-a_k), \; k=1,\dots,n-1\,,
\end{equation*}
with $b_0 = b_n = 0$. The study of its continuum limit, carried out in \cite{Deift98}, presents many similarities with the zero dispersion limit of the KdV equation; in particular, it leads to a similar extremal problem.

For the purposes of our paper it is useful to compare the equilibrium problem \eqref{GreenEn} with the problem \eqref{totalcontEnergy}--\eqref{weightedenergy} above. Observe that  for $\eta,\tau > 0$ the function  $\log (|\eta + \tau|/|\eta - \tau|)$ is the Green function $g_\Omega(\eta,\tau)$  (in the variable $\eta$) of the right half plane $\Omega$ with pole at $\tau$. Therefore, the first term in the right hand side of \eqref{GreenEn} is the (doubled)  Green energy of the distribution $d\mu =  \psi(\eta) d\eta$. The second term is the (again, doubled) energy of this distribution in the external field
\begin{equation}
\label{GreenField}
\varphi(\eta) = a(\eta, x,t) =  x\eta  - 4 t\eta^3  - \theta(\eta). 
\end{equation}

Thus, $Q(\psi, x,t)$ is the total weighted Green's energy of the measure $\mu$, that is
\begin{equation}
\label{GreenEn2}
Q(\psi, x,t) =  I^{\Omega}_\varphi (\mu) =
\iint_{[0,1]^2} g_\Omega(\eta,\tau) \, d\mu (\eta) d\mu(\tau) +
 2 \int_{[0,1]} \varphi(\eta) d\mu( \eta),
\end{equation}
and $Q^*$ is the corresponding minimal (or equilibrium) energy. 

This equilibrium problem is in many ways similar to the problem for the logarithmic potential in Section~\ref{subsect:equilibrium_analytic} above, even though many important details are different. 

First, the Green potential for the right half plane is characterized by the presence of a mirror image of the measure with respect to the origin, contributing to the energy. This changes the form of the related explicit formulas, but not their nature. 

More important is the character of the extremal problem. The upper bound for  the density, $   \psi\leq \phi/\pi$,  in the class of measures $d\mu = \psi(\eta)d\eta$  reflects the fact that the  continuous equilibrium problem has been obtained as a limit of a certain family of discrete equilibria ($\phi$ is the scaled density of the original discrete set in $[0,1]$). This makes the problem essentially more difficult since the support of the equilibrium measure now split into two different parts -- the ``free'' one (aka ``bands'', see \cite[Section 2.1]{Baik}), and  the part where the constraint is in effect (``saturated regions''). Such type of equilibrium problems was later rediscovered in approximation theory in the study of the zero distribution of discrete orthogonal polynomials \cite{Baik}, \cite{Dragnev/Saff}, \cite{MR1662699}, \cite{MR1687531}, \cite{MR1418343}.

Finally, the minimization of the total Green energy is carried out in the class of measures $d\mu = \psi(\eta)d\eta$ with $ 0\leq \psi\leq \phi/\pi$  on $[0,1]$. Observe that   $m = \int d\mu$ is not fixed, which means that the total mass of the measure is not a parameter (or variable) of the problem. Instead, it is determined together with the extremal measure as a function of the parameters $x$ and $t$ ($m = m(x,t)$) by the normalization condition that the equilibrium constant $c_t = 0$.

The extremal problem \eqref{totalcontEnergy}--\eqref{weightedenergy}, under consideration in this paper, is certainly related to the Lax-Levermore minimizer, at least in spirit.  The  family of external fields considered in \cite{Lax/Levermore} contains two parameters, and differrentiation with respect to these parameters is a part of the procedure of solving a PDE. The analogies between both problems can be carried out further, but this is not straightforward. Actually,   the two situations are different in so many details that it is rather difficult to establish clear connections between particular results or formulas. In any case, references to the work of Lax and Levermore will appear along the text.

\subsection{Random matrices and Dyson gas revisited} \label{sec:revisited}
 
Let us return to the random models mentioned in the beginning of this Introduction.  An important fact, already mentioned there, is that the eigenvalue distribution of the unitary ensemble of random matrices \eqref{Eigdensity} may be equivalently written in terms of the discrete energy. Indeed, since we deal only with symmetric functions of a vector $\bm \zeta=(\zeta_1, \dots, \zeta_n)\in \R^n$, we may identify $\bm \zeta$ with the non-ordered collection $\{\zeta_1,  \dots, \zeta_n\}$ of real points. This collection uniquely determines  measure $\eta \in M_n$ with $ \supp(\eta)=\{\zeta_1,  \dots, \zeta_n\}$ by
$$
\eta = \sum_{j=1}^n \delta_{\zeta_j},
$$
so that we can identify also $\bm \zeta\in \R^n \leftrightarrow \eta\in M_n$ and write
$E_\varphi(\bm \zeta)$ instead of $ E_\varphi(\eta)$. 

Now, we define
\begin{equation} \label{Gibbs}	
d \mu_{n,N}(\bm \zeta)=\frac{1}{Z_{n,N}}\, e^{-n^2 E_\psi(\bm \zeta)}, \quad  Z_{n,N}=\int_{\R^n }  e^{-n^2 E_\psi(\bm \zeta)}\,  d \bm \zeta,
\end{equation}
where $\psi =\frac{N}{2n}\varphi$.
Distribution  $ \mu_{n,N}(\bm \zeta)$ coincides with $\mu_N (\bm \lambda)$ in \eqref{Eigdensity} if we identify $\bm \lambda =\bm \zeta$ and use also connection \eqref{limitbehavior1}. As observed before, \eqref{Gibbs} is a canonical Gibbs distribution for a Dyson gas. 

In this connection, the classical Heine's formula
 $$
 Q_N(x)=\int_{\R^N} \prod_{k=1}^N (x-\zeta_k)\,d\, \mu_{n,N}(\bm \zeta), 
 $$
represents the orthogonal polynomial $Q_N$ with respect to the weight $e^{-N \varphi}= e^{-2 n \psi}$ as the average of polynomials with   random real zeros distributed according to $
 \mu_{n,N}$.

In the last decade the distribution $\mu_{n,N}$ and its characteristics, such as the correlation function, has been intensively investigated in relation to the eigenvalue statistics of the unitary ensembles of Hermitian matrices. 

One of the relevant features of $\mu_{n,N}$ in the asymptotic (thermodynamic) regime is that it exhibits non-trivial phase transitions or critical behavior, with universal properties that attract increasing attention. The existence of such phase transitions and their character has important physical interpretations, see e.g.~\cite{Andric:2008fk}, \cite{MR1920071}. 

The equilibrium measure $\lambda_t$ (and consequently, the parameters upon which it depends) is called \emph{regular} (see e.g.~\cite{MR2001g:42050} or \cite{MR1744002}) if the following conditions hold: (a)  $\lambda_t$ is absolutely continuous on $\R$ with respect to the Lebesgue measure and  $\lambda_t'$ does not vanish in the interior of $S_t$; (b) $\lambda_t'$ has a square root vanishing at the endpoints of the connected components of the support $S_t$; and (c) the inequality in \eqref{equilibrium1} is strict in $\R\setminus S_t$. Otherwise, the equilibrium measure is singular. Following the terminology of  \cite{MR2001g:42050}, \cite{MR1744002}, an equality in \eqref{equilibrium1} at a point in $\R\setminus S_t$ is a \emph{singularity of type I} (this is also known as ``birth of the cut'', see below); 
zeros of $\lambda_t'$ that lie in the interior of $S_t$ are called \emph{singular points  of type II} (corresponding to a closure of a gap), while the endpoints of the connected components of $S_t$ where $\lambda_t'$ has a higher order vanishing are called  \emph{singular points  of type III}.

The infinite volume free energy and the correlation kernels are analytic functions of the parameters of the problem as long as the underlying equilibrium measure is regular (see \cite{MR1744002}). Hence, phase transitions (in the physical sense of break of analyticity of the free energy) are intimately connected to the singularities of the equilibrium measure; this is the central idea of this work. Our goal is to discuss the mechanisms of these phase transitions and to analyze the behavior of the free energy near its critical values.

The asymptotic behavior of a random matrix ensemble near singular points of the equilibrium measure has been studied intensively (see \cite{MR2519661},  \cite{Bleher/Its03}, \cite{Bleher/Kuijlaars3}, \cite{MR2434886}, \cite{MR2427458}, \cite{MR2260263}, \cite{MR2244325}, \cite{MR2429248}, \cite{MR1744002}, \cite{MR2436561}, to mention a few). A remarkable feature of this behavior is revealed in the so-called double scaling limit, where the parameters near the critical value are coupled with the $N$ (the size of the  matrix). In this case, with an appropriate scaling of the variables, the asymptotic behavior of the correlation kernel near the critical point depends only on the type of the singularity rather than on the potential itself, which is one of the manifestations of the universality in the random matrix models.

It is known that the asymptotic behavior of the correlation kernel in a double scaling limit at the singular points is connected to the hierarchies of the Painlev\'e equations. In the general polynomial case, the local behavior at the birth of the cut was described in \cite{MR2244325} (in terms of orthogonal polynomials with respect to the Freud weight $e^{-x^{2\nu}}$ on $\R$), and proved almost simultaneously in \cite{MR2519661}, \cite{MR2427458}, \cite{MR2436561} using the Riemann-Hilbert techniques. For the type II singularity \cite{Bleher99}, \cite{MR2434886},  the kernel can be written in terms of the Hastings--McLeod solution of the Painlev\'e II equation, while for the type III singularity, the kernel is related to the Painlev\'e I transcendent \cite{Claeys:2007fk}, \cite{MR2260263}. The double-scaling asymptotic analysis of this sort is beyond the scope and techniques of this paper.

\section{Families of the equilibrium and critical measures on the real line} \label{sec:critical}

In the study of measures minimizing the total energy functional \eqref{totalcontEnergy} it is often convenient to consider simultaneously solutions of certain more general but closely related types of equilibrium problems and the associated  measures, which we call \emph{critical measures}. We discuss here two kinds of critical measures: the local minima and the saddle points of the energy, used to derive an equation for the Cauchy transform of equilibrium measure (see Theorem \ref{lem:MFRa09a} below). In this connection we make also a few general  remarks on the equations that can be useful for the constructive determination of the equilibrium measures.

\subsection{Integral equations for the density}

We fix a smooth  external field $\varphi \in C^{1+\varepsilon}(\R)$ and $t>0$. Recall that the pair consisting of a positive measure $\lambda = \lambda_t \in \mathcal M_t (\R)$ (the equilibrium measure with total mass $t$) and and an (equilibrium) constant $c=c_t\in \R$ is uniquely defined by the (equilibrium) conditions  \eqref{equilibrium1}; measure $\lambda_t $ is also the unique  global minimizer of the total energy in the class $\mathcal M_t (\R)$. Under the assumptions above, it is absolutely continuous with respect to the Lebesgue measure, so that $d\lambda (y) = \lambda'(y) dy$ on $\R$.

Consequently, the problem of the constructive determination of $\lambda_t$ boils down to finding its support $S_t$, a problem that has been under investigation for a few decades. The main conclusion is that this problem has a simple constructive solution if $S_t$ is a single interval (see e.g.~\cite{MR973424} or \cite[Ch.~IV]{Saff:97}); when the support is not connected, a satisfactory constructive solution is not available. 

Let us provide some details in this respect. Assume that the support is a union of $p\geq 1$ disjoint intervals,
\begin{equation} \label{assupport}
S_t=\bigcup_{k=1}^p \Delta_k, \quad \Delta_k= [a_{2k-1},a_{2k}], \quad a_1<a_2<\dots<a_{2p}.
\end{equation}
Differentiating the equality part of the equilibrium conditions \eqref{equilibrium1} one obtains the singular integral equation 
\begin{equation}\label{SinEqu} 
\int_{S_t} \frac {\lambda'(y)}{x-y}\, dy = \varphi'(x) ,\quad x \in S_t.
\end{equation}
This equation on the unknown function $\lambda' \in L^1(S_t)$, given a fixed $S_t$ in \eqref{assupport} and a fixed $\varphi'\in C^{\varepsilon} (S_t)$, is a well known object in the theory of singular integral equations. A general solution of this equation may be written  in terms of singular integrals (see \cite[Ch.~5, \S 4]{MR587310}, as well as \cite{Gakhov1990}, \cite{MR1215485}). The explicit expression for this solution is not so relevant here, but it renders a system of $p$ equations on the $2p$ endpoints  $a_j$ of the support $S_t$ (see equations \eqref{orthCond} below) that are central to our discussion. 

Condition $\varphi \in C^{1+\varepsilon}(\R)$  implies that $\lambda'_t \in C(S_t)$, and in particular, that the density of the equilibrium measure is bounded on $S_t$. Thus, we are interested in the bounded solutions of \eqref{SinEqu}: 
\begin{theorem} Assume $t>0$ fixed. A bounded solution for \eqref{SinEqu} exists if and only if 
\begin{equation}\label{orthCond}
 \int_{S_t} \frac{y^j\,\varphi'(y)}{\sqrt{|A(y)|}}\,dy = 0  , \quad j=0,\dots ,p-1  , \quad \text{where }  A(z)\,=\,\prod_{k=1}^{2p}\,(z-a_k).
\end{equation}	
Furthermore, in this case the bounded solution  is unique and is given by the formula
\begin{equation} \label{expressionForLambda}	
\lambda'(y) = \frac1\pi \sqrt{|A(y)|}  \int_{S_t} \frac{\varphi'(x)}{(x-y)\sqrt{|A(x)|}}\,dx. 
\end{equation}
\end{theorem}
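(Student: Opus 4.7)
My plan is to derive both the solvability conditions and the explicit formula via the classical Riemann--Hilbert approach to singular integral equations of the first kind going back to Muskhelishvili; since the paper cites \cite{MR587310} immediately before the statement, I expect the proof to invoke exactly this framework.

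First I would introduce the resolvent $R(z)=\int_{S_t}\lambda'(y)\,dy/(z-y)$, holomorphic on $\mathbb{C}\setminus S_t$ with $R(z)=t/z+O(z^{-2})$ as $z\to\infty$. The Sokhotski--Plemelj formulas translate \eqref{SinEqu} into the additive boundary relation $R_+(x)+R_-(x)=2\varphi'(x)$ on $S_t$, while $R_+(x)-R_-(x)=-2\pi i\,\lambda'(x)$ will be used at the end to recover $\lambda'$. Boundedness of $\lambda'$ means that $R$ only has logarithmic singularities at the $a_j$.

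Next I would scalarize the additive relation by dividing by the appropriate square root. Let $w(z)$ be the branch of $\sqrt{A(z)}$ holomorphic on $\mathbb{C}\setminus S_t$ with $w(z)\sim z^p$ at infinity, so that $w_+=-w_-$ and $|w_+(x)|=\sqrt{|A(x)|}$ on $S_t$. Setting $F(z)=R(z)/w(z)$ turns the additive relation into a pure jump problem, $F_+-F_-=(R_++R_-)/w_+=2\varphi'/w_+$, while $F(z)=O(z^{-p-1})$ at infinity and the endpoint singularities of $F$ remain integrable. Plemelj's formula then forces
\[
F(z)=\frac{1}{\pi i}\int_{S_t}\frac{\varphi'(x)\,dx}{w_+(x)(x-z)}.
\]
Now I would read off the solvability conditions by expanding this Cauchy integral in powers of $1/z$ near infinity: it is \emph{a priori} only $O(1/z)$, so requiring the actual decay $O(z^{-p-1})$ dictated by $F=R/w$ forces the first $p$ coefficients to vanish, i.e. $\int_{S_t} x^j\varphi'(x)/w_+(x)\,dx=0$ for $j=0,\dots,p-1$. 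After writing $w_+(x)=i\sigma(x)\sqrt{|A(x)|}$ with a locally constant sign $\sigma=\pm1$ on each $\Delta_k$, these identities rearrange to the stated orthogonality conditions \eqref{orthCond}. The converse is essentially free: once \eqref{orthCond} holds the above Cauchy integral decays as required, $R=wF$ solves the additive Riemann--Hilbert problem, and $\lambda'=-(R_+-R_-)/(2\pi i)$ produces a solution of \eqref{SinEqu}; computing $R_+-R_-$ with $w_+=-w_-$ yields the closed form \eqref{expressionForLambda}.

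The main obstacle I foresee is not the algebraic manipulation but the verification that the solution so constructed is genuinely \emph{bounded} (and not merely integrable) at each $a_j$: one must show that, under \eqref{orthCond}, the principal-value integral in \eqref{expressionForLambda} extends continuously to the endpoints so that the prefactor $\sqrt{|A(y)|}$ vanishes at precisely the right rate. This is standard but delicate and relies on Hölder continuity of $\varphi'$, which is guaranteed by the assumption $\varphi\in C^{1+\varepsilon}$. Uniqueness of the bounded solution comes for free from the uniqueness of the Riemann--Hilbert problem in the decay class $O(z^{-p-1})$.
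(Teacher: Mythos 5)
Your approach — pass from \eqref{SinEqu} to an additive Riemann--Hilbert problem for the resolvent, scalarize by dividing by a branch of $\sqrt{A}$, read the solvability conditions off the expansion of the Cauchy integral at infinity, and recover the density from the boundary jump — is exactly the classical Muskhelishvili/Gakhov machinery that the paper itself invokes without proof (it simply defers to \cite{MR587310,Gakhov1990,MR1215485} and to \cite[Ch.~6]{MR2000g:47048}), so you are reconstructing the intended argument rather than taking a different route.

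Two technical points deserve more care if you actually push the computation to the end. First, the sign of $w_+$: you correctly record $w_+(x)=i\sigma(x)\sqrt{|A(x)|}$ with $\sigma$ locally constant, but $\sigma$ \emph{alternates} between consecutive bands (one checks $\sigma=(-1)^{p-k}$ on $\Delta_k$), so what the decay of the Cauchy integral actually gives you is $\int_{S_t} x^j\varphi'(x)/w_+(x)\,dx=0$, which is a signed version of \eqref{orthCond}. These coincide with \eqref{orthCond} as literally written only when $p=1$; for $p\ge2$ the paper's unsigned $\sqrt{|A|}$ must be read as shorthand for the boundary value $(A^{1/2})_+$. You should not say the identities ``rearrange'' to \eqref{orthCond} without noting this. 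Second, carefully tracking the Plemelj constants — $R_+-R_-=-2\pi i\lambda'$, $F(z)=\frac{1}{2\pi i}\int\frac{(F_+-F_-)\,dx}{x-z}$, hence $F_++F_-=\frac{1}{\pi i}\,\mathrm{PV}\!\int\frac{(F_+-F_-)\,dx}{x-y}$ — produces a prefactor $\frac{1}{\pi^2}$, not $\frac{1}{\pi}$, in the closed form; a one-line semicircle check ($\varphi'(x)=x$, $S_1=[-\sqrt2,\sqrt2]$, $\lambda_1'(y)=\frac1\pi\sqrt{2-y^2}$) confirms $\frac{1}{\pi^2}$ is correct and that \eqref{expressionForLambda} as printed has a typographical slip. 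Neither point undermines your strategy, and the uniqueness and boundedness discussion is fine, but you should not claim to have ``yielded'' \eqref{expressionForLambda} verbatim without flagging the $\pi$ discrepancy.
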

This result is well known. Equations \eqref{orthCond} can be found, e.g., in \cite[Ch.~6]{MR2000g:47048} and in \cite{Saff:97}. Alternatively, they can be regarded as  a corollary of a representation for a general solution of \eqref{SinEqu}, see \cite{MR587310}, \cite{Gakhov1990}, \cite{MR1215485}. 

A general solution of \eqref{SinEqu} satisfies
\begin{equation}\label{CritCond}
W^{\lambda_t}(x):= V^{\lambda_t} (x) +\varphi (x) = c_t^{(k)}  , \quad  x \in \Delta_k,\quad k=1,2,\dots, p,
\end{equation}
where constants $c_t^{(k)}$ are not necessarily the same on different intervals $\Delta_k$. In order to get \eqref{equilibrium1} we must impose  $p-1$  extra conditions,
\begin{equation}\label{Periods_0}
c_t^{(1)} = c_t^{(2)} =\dots= c_t^{(p)}.
\end{equation}
Since $\lambda_t$ is uniquely determined by $\varphi$ and $ S_t$,  these are equations on $a_k$'s. Actually, equations \eqref{orthCond} and \eqref{Periods_0}, complemented with the normalization condition $\lambda_t(S) = t$, constitute a system of $2p$ equations on $2p$ unknowns, the endpoints $a_k$ of  the intervals in $S_t$. Using the  expression for $\lambda_t$ in \eqref{expressionForLambda} they can be explicitly rewritten in terms of $\varphi$. We omit these calculations here, but see \eqref{periods} below, where these equations will be rewritten in a form more suitable for our needs. 

The question in what sense these equations  determine $S_t=\supp(\lambda_t)$ is not trivial. Actually, we still need to verify the inequalities $\lambda'(y) \geq 0$,  $y\in S_t$, along with  $V^{\lambda} (x) +\varphi (x)  \geq c_t$, $ x \in \R$. If they are satisfied, the problem is solved. Can this solution be regarded as constructive? We can certainly assert this in the one-cut situation. But we are inclined to give a rather negative answer in the multi-cut case. To the best of our knowledge, the general support problem is actually a whole research program. 

It is appropriate to mention here the so-called Lax pair method \cite{McLeod} for finding the eigenvalue densities in the Hermitian matrix models. It is based on string equations, that are  the consistency conditions for the discrete-continuous differential relations satisfied by the associated orthogonal polynomials, see e.g.~\cite{MR3099797}.

Let us point out that once $S_t$ has been established, it determines the partition of the total mass $t$ into its ``portions'' 
$$
|\lambda_t|({\Delta_k}) = \int_{\Delta_k} d\lambda_t, \quad k = 1, \dots, p.
$$
These values are closely related to the so-called 't Hooft parameters in the gauge theories.

\subsection{Critical measures on the real line} 

We can make additional progress under the assumption that the external field is real-analytic  on the real line. More exactly, we assume that there is a domain $\Omega$ containing $\R$ and a function $\Phi\in H(\Omega)$ such that $\varphi(x) = \Phi(x) = \Re \Phi(x) $, $ x\in\R$. 

\begin{theorem} \label{lem:MFRa09a}
With the assumptions and notation introduced above, there exists an analytic function $R=R_t$ in $\Omega$, real-valued on $\R$,  such that
\begin{equation}\label{charactMeasure}
\left(C^{\lambda_t}(z) + \Phi'(z)
\right)^2=R(z), \qquad z\in \Omega \setminus S_t,
\end{equation}
where 
$$
C^{\lambda_t}(z)= \int  \frac{d\lambda_t (y)}{ y-z } 
$$
is the Cauchy transform of $\lambda_t$.
\end{theorem}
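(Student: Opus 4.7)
The plan is to introduce the auxiliary function
\[
F(z) := C^{\lambda_t}(z) + \Phi'(z),
\]
which is holomorphic in $\Omega\setminus S_t$ (since $C^{\lambda_t}$ is holomorphic off $S_t$ and $\Phi\in H(\Omega)$), and then to show that $F^2$ admits an analytic continuation $R$ across $S_t$ to all of $\Omega$, real-valued on $\mathbb{R}$.

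The first step is to compute the boundary values of $F$ on the interior of the support. By Sokhotski--Plemelj, for $x\in S_t^\circ$,
\[
C^{\lambda_t}_{\pm}(x) \;=\; \mathrm{p.v.}\!\int \frac{d\lambda_t(y)}{y-x} \;\pm\; i\pi\,\lambda_t'(x).
\]
The differentiated equilibrium condition --- i.e.\ the singular integral equation \eqref{SinEqu} --- says precisely that the principal-value integral equals $-\varphi'(x)$ on $S_t^\circ$, and since $\Phi$ is real on $\mathbb{R}$ we have $\Phi'(x)=\varphi'(x)$ there. Therefore
\[
F_\pm(x) \;=\; \pm\, i\pi\,\lambda_t'(x),\qquad x\in S_t^\circ,
\]
so $F_++F_-\equiv 0$ and in particular $F_+^2=F_-^2$ on $S_t^\circ$. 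Morera's theorem then shows that $F^2$ extends analytically through the open support, yielding a function $R$ holomorphic on $\Omega\setminus\{a_1,\dots,a_{2p}\}$ and, by inspection, equal to $-\pi^2(\lambda_t')^2$ on $S_t^\circ$.

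It remains to show that the endpoints $a_k$ are removable singularities of $R$. Here the explicit representation \eqref{expressionForLambda} from the preceding theorem is decisive: it forces $\lambda_t'(y) = O(\sqrt{|y-a_k|})$ as $y\to a_k$. Standard estimates for Cauchy transforms of H\"older densities vanishing at the endpoint then ensure that $C^{\lambda_t}$, and hence $F$, stays bounded near each $a_k$, so $F^2$ is bounded there and Riemann's theorem on removable singularities upgrades $R$ to an analytic function on all of $\Omega$. Reality of $R$ on $\mathbb{R}$ is immediate: on $\mathbb{R}\setminus S_t$ both $C^{\lambda_t}$ and $\Phi'$ are real-valued, and on $S_t^\circ$ one has $R=-\pi^2(\lambda_t')^2\in\mathbb{R}$; continuity of the extension does the rest.

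The main technical obstacle lies in this endpoint analysis: a priori $F$ could carry a logarithmic singularity at an $a_k$ (which squaring would not kill), and it is precisely the square-root vanishing of $\lambda_t'$ at the endpoints, guaranteed by \eqref{expressionForLambda}, that makes Riemann's theorem applicable. Everything else reduces to a clean application of Sokhotski--Plemelj plus the differentiated equilibrium equation.
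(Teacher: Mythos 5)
Your proof is correct, but it follows a genuinely different route than the paper's. The paper obtains \eqref{charactMeasure} variationally: one deforms the plane by $x\mapsto x+\tau\,\Re\frac{e^{i\theta}}{x-z}$, transports $\lambda_t$ along the induced flow, and sets the first variation of the weighted energy to zero; the identity is then read off as a special case of the critical-measure / $S$-curve theory of \cite{GR:87} and \cite{MR2770010}, using that $\R$ is an $S$-curve for any real-analytic field. You instead work locally on $\R$: Sokhotski--Plemelj combined with the differentiated equilibrium equation gives $F_+=-F_-$ on the interior of each component of $S_t$, hence $F_+^2=F_-^2$; a Morera/Schwarz-reflection argument extends $F^2$ across the open support; and the square-root vanishing of $\lambda_t'$ at the endpoints (drawn from \eqref{expressionForLambda}) keeps $F$ bounded near each $a_k$, so Riemann's removable-singularity theorem finishes the job. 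The two routes buy different things. The variational proof does not presuppose that $S_t$ is a finite union of intervals---indeed the paper uses \eqref{charactMeasure} afterwards to \emph{deduce} that structure (finitely many intervals whose endpoints are zeros of $R_t$)---and it is the argument that extends naturally to $S$-contours off the real line. Your argument, by contrast, explicitly leans on \eqref{assupport} and on the representation \eqref{expressionForLambda}, both of which were set up earlier under the finite-interval hypothesis; within the paper's stated framework this is perfectly legitimate and is a clean, elementary alternative, but a fully self-contained treatment along your lines would first have to establish the structure of $S_t$ by other means. One minor point worth flagging: the Morera/continuation step needs the boundary values $F_\pm$ to be attained continuously, which holds because $\lambda_t'$ is H\"older on $S_t^\circ$ for real-analytic $\varphi$ (Plemelj--Privalov); you invoke this implicitly.
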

Formula \eqref{charactMeasure} can be obtained, for instance,  considering variations of the plane of the form $x \to x^\tau= x+ \tau \Re \frac{e^{i\theta}}{x-z}$, with $\theta\in \R$, $\tau>0$ and $z\in  \C\setminus \R$, which induce  variations on measures, and equating to zero the corresponding first variation of the total energy. This idea can be traced back to the original variational arguments of  Schiffer, see e.g.~\cite{Jenkins}, \cite{MR0065652}, \cite[Section 21.11]{Strebel}.

In this way, the theorem is a direct corollary of a more general theorem on equilibrium measures of $S$-curves from \cite[p.~333]{GR:87} or \cite{MR2770010}, and the fact that the real line is an $S$-curve in any real-analytic field. The notion of the $S$-property turned out to be crucial in the extension of the results on the real line to the equilibrium in a harmonic external field in $\C$, considering the complex plane as a conductor. In the described situation the representation \eqref{charactMeasure} remains valid, as well as many of its corollaries derived in this paper, with some obvious modifications. However, we will not go into details here and refer the reader to  \cite{MR2770010}, \cite{Rakhmanov2012} for a more thorough discussion. 
 
In the important paper \cite{MR2000j:31003}, the connection between the discrete and continuous equilibria (in other words, the weighted Fekete points) was explored. In this way, the authors proved formula \eqref{charactMeasure} and established the analyticity of the density of the equilibrium measure in the real-analytic field.
 
The proof of Theorem \ref{lem:MFRa09a} actually carries over to a more general situation, e.g.~when  $\Phi'$ is meromorphic  (in this case $R$ might have poles at the location of poles of $\Phi'$). 
In a more physical context, \eqref{charactMeasure} was obtained for instance as the saddle-point approximation to the large $N$ limit of the number of planar Feynman diagrams of quantum field theory \cite{MR0471676}. The meromorphic $\Phi'$ has found applications also in the study of the asymptotic distribution of zeros of Heine-Stieltjes polynomials \cite{MR2770010} or in the analysis of the phase transition in matrix model with logarithmic action \cite{MR2222967}.

Let us point out briefly some corollaries of Theorem \ref{lem:MFRa09a}.

Clearly, \eqref{charactMeasure} allows us to recover easily the density of the equilibrium measure $\lambda_t\in \MM_t$ on $\R$:
\begin{equation}\label{density}
 d   \lambda_t(y)=\frac{1}{\pi}\,\sqrt{|R_t (y)|}\,dy,\quad y\in \supp(\lambda_t)\,,
    \end{equation}
and its total potential,
\begin{equation}\label{totalpotential}
    W_\varphi^{\lambda_t}(z)= \Re \,\int^z\,\sqrt{R_t(y)}\,dy\,
    \end{equation}
with an appropriate choice of the lower limit of integration and of the branch of the square root. It follows further (as it was shown in  \cite{MR2000j:31003}) that the support  $S_t$ of $\lambda_t$ consists of a finite number of intervals, whose endpoints are simple real zeros of $R_t$; the unknown zeros of $R_t$ become the main parameters determining $\lambda_t$ and its support.

In this representation we do not have equations \eqref{orthCond} anymore (they are now ``embedded'' in the representation 
\eqref{density}, which automatically defines a bounded density). Equations \eqref{Periods_0} now take the form
\begin{align} 
\label{periods}
\int_{a_{2k}}^{a_{2k+1}}\,\sqrt{R_t(y)}\,dy = & 0\,, \quad  k=1,\dots ,p-1 ,
\end{align}
which again can be found in \cite{MR2000j:31003}. We could continue rewriting further the conditions on the unknown parameters, taking into account e.g.~the explicit form of $\Phi'$ and identity \eqref{charactMeasure}, but in no case the equivalent equations will be elementary, or at least suitable for a truly constructive solution. 

One approach that renders an effective procedure for investigation of those equations is to consider the dependence of the system from its parameters. The main parameter is $t $, and it is the central object in this paper. 
Details are presented in subsections \ref{subsec:dynamical_first} and \ref{subsec:dynamical} below.

\subsection{Families of equilibrium measures parametrized by the total mass} \label{subsec:dynamical_first}

In this section we discuss the differentiation formula $\frac d{dt}\lambda_t$ =  Robin measure of $\supp (\lambda_t) $, proposed by Buyarov and Rakh\-ma\-nov in  \cite{Buyarov/Rakhmanov:99},  and the corresponding integral formula, which recovers the field from the family of supports. We present a rather general form of the  theorem below, having in mind that results of the paper may be extended directly to the case of piece-wise analytic fields,  which may  be discontinuous. 

 For an external field $\varphi $, $t>0$ and a Borel measure $\sigma \in \MM_t$ let us introduce two sets,
\begin{equation*} 
S_t(\sigma):=\supp(\sigma), \qquad S^t(\sigma):=\{x\in \R: \, W_\varphi^{\sigma}(x)=\min_{s\in \R} W_\varphi^{\sigma}(s) \},
\end{equation*}	
see the definition in \eqref{totalPotential}. When $\sigma=\lambda_t$, we omit the reference to the measure, writing $S_t$ instead of $S_t(\lambda_t)$ and $S^t$ instead of $S^t(\lambda_t)$.  We assume that the family $\lambda_t = \lambda_{t}(\varphi)$ is associated with a lower semicontinuous function $\varphi $ satisfying  \eqref{cond2}.  Since $c_t = \min_\R W^{\lambda_t}$,  the equilibrium condition \eqref{equilibrium1} can be expressed in the following equivalent form:
\begin{equation}
\label{Sequil}
S_t  \subseteq S^t ,
\end{equation}
and $\lambda_t$ is the unique measure in $\MM_t$ satisfying \eqref{Sequil}.

The following result is Theorem 2 from \cite{Buyarov/Rakhmanov:99} stated in a somewhat simplified form:
\begin{theorem}
\label{lem:Buyarov/Rakhmanov:99}
Under the assumptions on $\varphi$ formulated above, the following assertions hold:
\begin{enumerate}
\item[(i)] The family of  equilibrium measures $\lambda_t = \lambda_{t}(\varphi)$ is monotonically increasing, as well as the families of supporting sets $S^{ t},\, S_{t}$; furthermore,  $S^{ t}=\bigcap_{\tau>t}S_{\tau}$.

\item[(ii)] There exist an at most countable set $\mathcal N \subset \R_+$ such that $\cp (S^t \setminus S_t) >0$ for $t \in \mathcal N $. 

\item[(iii)] For every $t \in \R_+ \setminus \mathcal N $ we have 
\begin{equation} \label{derivativesMeasure}
\frac{d \lambda_{t}}{d t}  =\omega_{S_t}, \quad \frac{d c_{t}}{d t}  =I[\omega_{S_t}]=\rho(S_{t}), \quad \frac{d I_{\varphi }[\lambda_t]}{d t}= 2 c_t.
\end{equation}	
Here $\omega_K$ and $\rho(K)= - \log \cp(K)$ are correspondingly the Robin measure and the Robin constant of the compact set $K$, and  $c_t$ is the extremal constant defined in \eqref{extremalConst}.

\item[(iv)] The following representation holds for the external field
\begin{equation} \label{phirepre}
\varphi(x)= c + \int_0^{\infty} g_{\Omega_\tau} (x, \infty) \, d\tau, \qquad x \in \bigcup_{t>0} S_{t},
\end{equation}	
where $g_{\Omega_\tau} (x, \infty)$ is the Green function of the domain $\Omega_\tau = \overline{\C}\setminus\! S_{t}$ with pole at infinity, and 
$c = \min_{x\in \R}\varphi(x) = \lim_{t\to 0}c_t$.
\end{enumerate}
\end{theorem}
In the terminology of the random matrix ensembles, formulas \eqref{derivativesMeasure} state that, up to a factor 2, the first variation of the infinite volume free energy is the equilibrium constant, while the second variation is the Robin constant of the support (limit spectrum) $S_t$.

The original theorem  in \cite{Buyarov/Rakhmanov:99} also asserts  the existence of the right and left derivatives of $\lambda_t$, $c_t$, $I_\varphi[\lambda_t]$ at any $t\geq 0$, including $t\in\mathcal N $. For analytic fields the set $\mathcal N $ is empty and \eqref{derivativesMeasure} is true for any $t\geq 0$. Furthermore, the first formula in \eqref{derivativesMeasure} should be understood  in general in a weak sense: for any continuous and compactly supported function $f$ on $\R$,
$$
\frac{d}{dt}\, \int f(x) d\lambda_t(x) = \int f(x) d\omega_{S_t}(x).
$$
For analytic fields the support $S_t$ is a finite union of intervals, and measure $\lambda_t$ has an analytic density inside each interval. In this case,  differentiation is understood in the strong sense. The proof of this part of the theorem is substantially simplified under assumption of the analyticity of the field, and we outline it next:
\begin{proof}
For  real-analytic external fields formulas \eqref{derivativesMeasure} can be derived directly from Theorem \ref{lem:MFRa09a}. Indeed, a variation with respect to $t$ of the equilibrium property \eqref{equilibrium1} and of the normalization condition $\lambda_t(\R)=t$ yield that $d\lambda_t/dt$ is a probability measure with the logarithmic potential constant on $S_t$. Formula \eqref{density} shows that it is absolutely continuous with respect to the Lebesgue measure on $\R$, and that its density is of the form
$$
     \lambda_t'(x) = \frac{1}{2\pi}\,\left| \frac{P(x)}{\sqrt{R_t (x)}}\right|, 
$$
where $P$ is a polynomial. These properties characterize the Robin measure of $S_t$, which establishes the first formula in \eqref{derivativesMeasure}. Moreover, the constant value of $V^{d\lambda_t/dt}$ on $S_t$ must be the Robin constant of $S_t$, so that by 
differentiating \eqref{equilibrium1} we get that $dc_t/dt =\rho(S_t)$. 

Finally, by \eqref{extremalConst},
$$
 I_{\varphi } [\lambda_t] = t c_{ t} + \int \varphi \, d\lambda_t.
  $$
From the first two identities in \eqref{derivativesMeasure}, just established, it follows that
$$
\frac{d I_{\varphi }[\lambda_t]}{d t}
=  c_{ t} + t \frac{d c_t}{d t} + \frac{d }{d t} \int \varphi \, d\lambda_t = c_{ t} + t \rho(S_{t}) +  \int \varphi \, d\omega_{S_t},
  $$
and it remains to observe that
$$
 t \rho(S_{t}) +  \int \varphi \, d\omega_{S_t} = \int V^{\omega_{S_t}} \, d\lambda_t +  \int \varphi \, d\omega_{S_t} = \int \left( V^{\lambda_t}  +    \varphi \right)\, d\omega_{S_t} = c_t.
$$
\end{proof}
 
Formulas \eqref{derivativesMeasure} have been independently proved in less generality in several works, see e.g.~\cite{MR1986409}, \cite{MR1139696}, \cite{MR2244325} (see also \cite[formula (1.8) and Lemma 1]{MR1322292}, where the particular one-cut case was analyzed).

An important corollary of the assertion \emph{(iv)} of the theorem is that under general assumptions on $\varphi$ (for instance, continuity), the external  field  can be recovered (up to an additive constant) from the family of supports $S_t$, using the explicit expression \eqref{phirepre}. 

Alternatively, the whole family $\{ S_t: \, t\in \R_+\}$ is characterized by a single function (``$\nu$-transform''),
\begin{equation} \label{nuTransform}
\nu(x) = \nu(x, \varphi) = \inf \{ t\in \R_+:\, x\in S_t\},
\end{equation}
so that
\begin{equation} \label{nuTransform2}
 S_t =  \{x\in \R:\, \nu(x) \leq t \}.
\end{equation}
The $\nu$-transform is a useful tool, in particular, for characterizing the class $\mathfrak F$  of external fields $\varphi$ for which the one-cut case (or equivalently, the zero-gap ansatz) holds for any $t\geq 0$:
\begin{equation}
\label{frakF}
\mathfrak F = \left\{ \varphi:\, S_t = [a(t), b(t)],\, \forall t\geq 0\right\}.
\end{equation}
For symmetric and absolutely continuous fields $\varphi(x) = \varphi(-x) $, nondecreasing for $x>0$, we have  that 
$\varphi \in \mathfrak F$ if and only if the function
\begin{equation} \label{nuT}
\nu(x) = \frac 1\pi \int_0^x t\varphi'(t) (x^2 - t^2)^{-1/2} dt
\end{equation}	
is nondecreasing in $\R_+$; in this case, function $\nu$ above is actually the $\nu$-transform of $\varphi$, and equation $\nu(x) = t$  determines $b(t) = - a(t)$ (uniquely, if $\nu $ is strictly increasing, see \cite {MR1322292},  formula (1.8) and Lemma 1 therein). The  solutions of $\nu(x) = n$, $n\in \N$, are known in the theory of orthogonal polynomials as Mahskar-Rakhmanv-Saff numbers, see \cite[Chapter IV]{Saff:97}.

 For non-symmetric fields the situation is more complicated. Let us assume again that  $\varphi(x)$ and $ \varphi(-x) $ are absolutely continuous and nondecreasing for $x>0$. If $S_t$ is an interval then its endpoints $a=a(t) \leq b=b(t)$  satisfy the system (see e.g.~\cite[Chapter IV]{Saff:97})
\begin{equation} \label{System}
\frac 1\pi \int_a^b  \frac {\varphi'(x)\, dx }{\sqrt {(x  - a)(b -x)}} = 0, \qquad  
\frac 1{2\pi} \int_a^b  \frac {x\varphi'(x)\, dx }{\sqrt {(x  - a)(b-x)}} = t.
\end{equation}
For a particular $t$ the converse is not true: a solution of the system \eqref{System} may have nothing to do with the support of the equilibrium measure. However, under the following additional assumptions, 
\begin{enumerate}
\item[(a)] \eqref{System} has a solution $ a(t) \leq  b(t)$ for any $t\geq 0$, 
\item[(b)] intervals $\Delta_t = [a(t), b(t)]$ are increasing, and their union covers $\R$,   
\end{enumerate}
it is easy to prove, using assertion \emph{(iv)} of Theorem~\ref{lem:Buyarov/Rakhmanov:99}, that $\varphi \in \mathfrak F$, and that $\Delta_t = S_t$ for all $t>0$. 

The following corollary belongs to the third author (it was reported at a meeting on Approximation Theory in Oberwolfach, in February 1992):
\begin{theorem}
If  both functions $\sqrt x \frac d{dx}\varphi (x) $ and $\sqrt x  \frac d{dx}\varphi(- x) $ are increasing for $x>0$ then conditions (a), (b) above are satisfied and consequently, $\varphi \in \mathfrak F$.
\end{theorem}
The assumptions  of this theorem are satisfied, for instance, for  $\varphi(x) = x^p$, $ x\geq 0$, and $\varphi(x) = |x|^q$, $ x < 0$, when  
if $p, q > 1/2$, and this condition is sharp.  A weaker asymptotic result was obtained earlier in \cite{MR1139696} in a context of investigation of logarithmic asymptotics for orthogonal polynomials with non-symmetric weights. 

In the multi-cut case, when each support is a finite union of intervals, representation \emph{(iv)} still may be effectively used, at least for estimates, even though we do not actually do it in this paper. Anyway, this representation and its corollary that the external field can be effectively recovered from the family of supports are  the main point of Theorem~\ref{lem:Buyarov/Rakhmanov:99}.

\subsection{Variations of the external field} \label{subsec:variationExtField}

Here we bring to consideration  another useful technique related to the variation of the external field. Let $\varphi(x,\tau)$ be a one-parametric family of external fields depending on a parameter $\tau$. Without loss of generality, let $\tau\in (-\varepsilon, \varepsilon)$; for simplicity, we assume here that $\varphi$ is real-analytic in both variables. The following result, extending the previous theorem, holds:
\begin{theorem}
\label{thm:generalparameter} Let $t>0$ be fixed. 
Assume that $\varphi(x,\tau): \R \times (-\varepsilon, \varepsilon)\to \R$ is real-analytic in both variables, and 
\begin{equation} \label{derivativegeneral}
\frac{\partial \varphi(x,\tau)}{\partial \tau} \bigg|_{\tau=0}=\psi(x)
\end{equation}
uniformly in $x$ in a neighborhood of $S_t=\supp(\lambda_t(\varphi(\cdot, 0))) $.
 
Then
\begin{equation*} 
\frac{\partial \lambda_{t}}{\partial \tau} \bigg|_{\tau=0}  =\mu,
\end{equation*}
where the (in general) signed measure $\mu$ is uniquely determined by the following conditions:
\begin{equation} \label{determinationMu}	
\supp(\mu)\subset S_t, \quad \mu(S_t)=0, \quad V^{\mu}+ \psi = \frac{\partial c_{t}}{\partial \tau} \bigg|_{\tau=0} = \const \text{ on } S_t.
\end{equation}
\end{theorem}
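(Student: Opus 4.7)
The plan is to mirror the proof of Theorem \ref{lem:Buyarov/Rakhmanov:99}: differentiate in $\tau$ the defining conditions of the family $\lambda_t(\tau):=\lambda_t(\varphi(\cdot,\tau))$ at $\tau=0$ and read off what the derivative $\mu:=\partial_\tau\lambda_t(\tau)|_{\tau=0}$ must be. The characterization \eqref{determinationMu} has three clauses --- vanishing total mass, support in $S_t$, and a potential identity on $S_t$ --- each handled separately, followed by a short uniqueness argument that rules out any other candidate.

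First I would differentiate the mass constraint $\lambda_t(\tau)(\R)=t$, which, being constant in $\tau$, gives immediately $\mu(\R)=0$. Next, I would fix an $x$ in the interior of $S_t=S_t(0)$. By the real-analyticity of $\varphi$ in both variables and Theorem \ref{lem:MFRa09a} (applied to each $\tau$), the endpoints of $S_t(\tau)$ depend smoothly on $\tau$ via the implicit function theorem applied to the system \eqref{orthCond}--\eqref{periods}; hence such an $x$ lies in the interior of $S_t(\tau)$ for all sufficiently small $|\tau|$. Differentiating the equilibrium identity $V^{\lambda_t(\tau)}(x)+\varphi(x,\tau)=c_t(\tau)$ at $\tau=0$ and invoking \eqref{derivativegeneral} yields
\begin{equation*}
V^{\mu}(x)+\psi(x)=\frac{\partial c_t}{\partial\tau}\bigg|_{\tau=0}
\end{equation*}
on the interior of $S_t$; both sides are continuous, so the identity extends to all of $S_t$.

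To secure $\supp(\mu)\subset S_t$, I would use that for regular $\lambda_t$ the density vanishes as $\sqrt{|y-a(\tau)|}$ at each endpoint $a(\tau)$, while the preceding step gives $a(\tau)=a(0)+O(\tau)$; hence the mass deposited or shed in an $O(\tau)$-neighborhood of $\partial S_t$ is $O(\tau^{3/2})=o(\tau)$ and does not contribute to the $\tau$-derivative. For uniqueness, suppose $\mu_1$ and $\mu_2$ both satisfy \eqref{determinationMu} with the same constant on the right; then $\nu:=\mu_1-\mu_2$ has $\supp(\nu)\subset S_t$, $\nu(\R)=0$, and $V^{\nu}\equiv 0$ on $S_t$. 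The vanishing total mass forces $V^{\nu}(z)=O(1/|z|)$ as $|z|\to\infty$, and $V^{\nu}$ is harmonic on $\C\setminus S_t$ with boundary value $0$ and limit $0$ at infinity; the maximum principle gives $V^{\nu}\equiv 0$, and inverting the logarithmic potential yields $\nu=0$.

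The hard part will be the support step at nongeneric values of the parameters where $\lambda_t$ has a singularity of type I, II, or III (in the sense of Section \ref{sec:revisited}): the square-root vanishing of the density and the smooth dependence of the endpoints on $\tau$ can both break down, so that $\mu$ may have to be interpreted as a distribution (possibly carrying atomic contributions at the singular points) rather than as a bona fide signed measure. At such parameter values one would proceed by taking one-sided $\tau$-derivatives or by passage to the limit from nearby regular parameters, in the same weak sense already adopted for the first identity in \eqref{derivativesMeasure}.
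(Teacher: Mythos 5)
Your proposal follows the same overall architecture as the paper's proof --- identify the three defining properties of $\mu$ by differentiating the equilibrium conditions in $\tau$, then close with a uniqueness argument --- but the execution differs in three places worth flagging.

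First, the paper does not assume that $\partial_\tau\lambda_t(\tau)|_{\tau=0}$ exists: it takes an arbitrary weak limit point of the difference quotients $(\lambda_t(\tau)-\lambda_t(0))/\tau$ (existence by weak compactness), verifies that any such limit point satisfies \eqref{determinationMu}, and only then concludes the derivative exists from the uniqueness. Your write-up treats the $\tau$-derivative as given from the start; if this were easy one would not need the uniqueness step at all to justify the notation, so the compactness--plus--uniqueness framing is not cosmetic and ought to appear. Second, to obtain the potential identity the paper differentiates the Cauchy-transform relation coming from Theorem~\ref{lem:MFRa09a}, namely $C^{\lambda_t(\tau)}+\varphi'(\cdot,\tau)=0$ in the interior of the support, and cites a lemma to justify passing the weak limit through the (principal-value) Cauchy transform; you instead differentiate $V^{\lambda_t(\tau)}+\varphi(\cdot,\tau)=c_t(\tau)$ directly. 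These are the integrated and differentiated forms of the same relation, but the pointwise interchange of the $\tau$-limit with the singular integral on $S_t$ is precisely the nontrivial step, and your version silently assumes it; it would be cleaner either to cite the same auxiliary lemma or to perform the computation off $S_t$ (where the kernel is smooth) and then take boundary values. Third, your uniqueness argument via harmonicity and the maximum principle is a genuine alternative to the paper's appeal to the positive-definiteness of the logarithmic energy on neutral signed measures; it works here because the candidate $\nu$ has a reasonably regular density on a finite union of intervals, so that $V^\nu$ extends continuously to $S_t$ and one can invoke the maximum principle on $\overline{\C}\setminus S_t$ including the point at infinity (note you need the value at infinity to be $0$ to pin down the constant). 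The energy argument in the paper is somewhat more robust, since it avoids any boundary-regularity hypothesis on $\nu$, but both are valid in the present analytic setting. Your $O(\tau^{3/2})$ estimate for the mass near moving endpoints is a correct heuristic for the regular case, and your closing caveat about singular parameter values is a reasonable remark, though the paper does not engage with it because Theorem~\ref{thm:generalparameter} is only invoked at generic (regular) $t$.
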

\begin{proof}
Let $\mu$ be any limit point (in the weak topology) of the family
\begin{equation} \label{familyNeutralMeasures}	
\frac{\lambda_t(\tau)-\lambda_t(0)}{\tau}
\end{equation}
(which exists due to its weak compactness). The first property of $\mu$ in \eqref{determinationMu} is obvious; the second one is a consequence of the fact that measures in \eqref{familyNeutralMeasures} are neutral (their mass is 0). Finally, with our analyticity assumptions on $\varphi$ it  easily follows from \eqref{charactMeasure} (see also \cite[Lemma 5.3]{MR2770010}) that 
$$
  C^{\lambda_t(\tau)}(x) + \varphi(x,\tau) =0, 
$$
in the interior of the support of $\lambda_t(\tau)$. Consequently, 
$$
C^\mu(x) + \psi'(x) =0, \qquad x\in S_t,
$$
which implies the third equality in \eqref{determinationMu}.

Conditions \eqref{determinationMu}  determine $\mu$ uniquely. Indeed, the logarithmic potential of the difference of two signed measures $\mu$ satisfying \eqref{determinationMu} is constant and equal to $0$ on $S_t$. It remains to use that the logarithmic energy is a positive definite functional on neutral signed measures (see e.g~\cite[Theorem 1.16]{Landkof:72}). 
\end{proof}

\begin{remark}
As we mentioned in Section~\ref{subsec:KdV}, variations with respect to specific parameters of an equilibrium problem can be found already in the seminal work of Lax and Levermore \cite{Lax/Levermore}. However, we have not found in literature the statement of Theorem~\ref{thm:generalparameter} in the form it is presented above.
\end{remark}

\begin{remark}
Theorem \ref{thm:generalparameter} can be extended to more general (e.g., $C^{1+\varepsilon}$) external fields $\varphi$, but we prefer to keep the settings as simple as possible, and still sufficient for our purposes. 
\end{remark}

\begin{remark} As it was mentioned above (see \eqref{homogeneity}), we can reduce our analysis to unit (probability) measures on $\R$, regarding the total mass or temperature $t$ as a particular parameter of the external fields of the form $\varphi/t$. Consequently, it is convenient to clarify the relation between Theorems \ref{lem:Buyarov/Rakhmanov:99}  and \ref{thm:generalparameter}.

According to \eqref{homogeneity},
$$
\lambda_t(\varphi) = t \lambda_1^*, \quad \text{where} \quad \lambda_1^*=\lambda_1(\varphi(\cdot, t )), \quad \varphi(\cdot, t )= \frac{1}{t}\, \varphi.
$$
The derivative of $\varphi(\cdot, t )$ at a given $t>0$ satisfies \eqref{derivativegeneral} with $\psi = -\varphi/t^2$, and from Theorem \ref{thm:generalparameter} it follows that $\partial \lambda_1^* /\partial t$  is  a signed measure $\mu$ characterized by the equilibrium conditions \eqref{determinationMu}. In this particular case $\mu$ can be computed explicitly: observe that
\begin{equation} \label{connectionThm2and3}	
\frac{\partial \lambda_t}{\partial t}= \lambda_1^* + t\,  \frac{\partial \lambda_1^*}{\partial t},
\end{equation}
so that applying Theorem \ref{lem:Buyarov/Rakhmanov:99} we get
$$
\frac{\partial \lambda_1^*}{\partial t}=\mu=\frac{1}{t}\left( \omega_{S_t} - \lambda_1^*\right);
$$
obviously, the right hand side is a neutral measure ($\mu(S_t)=0$) satisfying \eqref{determinationMu}, with the equilibrium constant equal to
$$
\frac{\partial}{\partial t} \left( \frac{c_t}{t}\right) = \frac{\rho(S_t)}{t}- \frac{c_t}{t^2}.
$$

The reciprocal statement is also valid: formula \eqref{connectionThm2and3} shows that the logarithmic potential of $\partial \lambda_t/\partial t$ is constant on $S_t$, fact that combined with the monotonicity of $\lambda_t$ yields the first formula in \eqref{derivativesMeasure}. 
\end{remark}

\section{A polynomial external field} \label{sec:polyn}

After the brief review of the necessary technical background we have carried out in the previous section, we can start our study of equilibrium measures in a polynomial external field. We define 
\begin{equation} \label{phiPolyn}
\varphi(x)=\sum_{j=1}^{2m} t_j \, x^{j}, \quad t_j\in \R, \quad t_{2m}>0,
\end{equation}	
where the coefficients $t_j$ (the \emph{coupling constants}) are the parameters of the problem. 
For convenience, we adopt the normalization
\begin{equation} \label{normalizationLeading}	
t_{2m}=\frac{1}{2m},
\end{equation}
as well as include the total mass or temperature $t_0=t$ as an additional coupling constant. In this way, the equilibrium measure $\lambda=\lambda_t(\varphi, \R)$ is a function of $2m$ real parameters  $t_j$, $j=0, 1, \dots, 2m-1$.

\subsection{$R$-representation and $(A,B)$-representation} \label{subsec:R-representation}

First, we reformulate Theorem \ref{lem:MFRa09a} and its corollaries specifically for the case at hand, that is,  when $\varphi$ is defined by \eqref{phiPolyn}--\eqref{normalizationLeading}. The theorem implies that for the associated equilibrium measure $\lambda_t\in \MM_t$ with total mass $t>0$ and support $S_t\subset \R$, the function 
\begin{equation}\label{charactMeasure1}
R_t(z) = \left(C^{\lambda_t} + \varphi' \right)^2(z)
= z^{4m-2} +\text{lower degree terms}
\end{equation}
is a polynomial in $z$ of degree $4m-2$ with coefficients depending of $t$. Actually, if we denote by $(f)_\oplus$ the  polynomial part of the expansion of $f$ at infinity, then the normalization $\lambda_t(\R)=t$ implies that
\begin{equation} \label{RcoefGeneral} 
 R_t=\left(\varphi'\right)^2+2 \left( \varphi' C^{\lambda_t}\right)_\oplus=\left(\varphi'\right)^2(z)-2t \, z^{2m-2}+\text{lower degree terms}.
\end{equation}	

Let  $a_1<a_2<\dots<a_{2p}$, $1\leq p\leq m$, denote the real zeros of $R_t$ of  odd multiplicities. Observe that $R_t$ cannot have complex zeros of odd multiplicity, and each real zero of odd multiplicity of $R_t$ is an end point of $S_t$, so that expression \eqref{assupport} holds.
We  define monic polynomials $A$ and $B$ by
$$
A(z)= \prod_{k=1}^{2p}\,(z-a_k), \quad \text{and}\quad  B^2(z) = \frac{R_t(z)}{A(z)}  
$$ 
(notice that $ R_t(z)/A(z)$ is a monic polynomial of degree $2(2 m-p-1)$ whose zeros have even multiplicities and, therefore, the  monic polynomial $B$ of degree $2m-p-1 \geq 0$ is correctly defined). By \eqref{periods}, for $p>1$ there should be at least one zero of $B$ in each gap $[a_{2k}, a_{2k+1}]$; this readily yields the bound $p\leq m$. Naturally, both polynomials $A$ and $B$, as well as the endpoints $a_k$ of the support $S_t$, are functions of the parameter $t$, fact that we usually omit from notation for the sake of brevity. 

In what follows we understand by $A^{1/2}$  and $R_t^{1/2}$ the holomorphic branches of these functions in $\C\setminus S_t$ determined by the condition
$A^{1/2}(x)>0$  and $R_t^{1/2}(x)>0$ for $x>a_{2p}$, as well as $f_+$ denotes the boundary value of $f$ on $\R$ from the upper half-plane.

By \eqref{density},
\begin{equation}\label{R-representation} 
d\lambda_t(x) =  
 \frac{1}{\pi i}\,(R_t^{1/2})_+(x)  \,dx,\quad x\in S_t.
\end{equation}

The factorization 
\begin{equation} \label{AB-repr}	
R_t(z)=A(z)B^2(z) = A(z; t)B^2(z; t)
\end{equation}
 plays a fundamental role in the  sequel.  
In particular,  the $R$-representation of the equilibrium measure in \eqref{R-representation} may be equivalently written in terms of the following $(A,B)$-representation:
\begin{equation}\label{(A,B)-representation} 
 d\lambda_t(x) =   \frac{1}{\pi i}\,A^{1/2}_+(x) B(x)\,dx, \quad x\in S_t.
\end{equation}

In a one-cut case ($p=1$) the  equalities \eqref{RcoefGeneral} and \eqref{AB-repr} render a system of $2m$ algebraic equations on the zeros of $A$ and $B$, matching the number of unknowns. Taking into account additionally that for $p=1$ equation \eqref{orthCond} is algebraic in $a_j$'s, this shows that in this (and basically, only in this) case all parameters of the equilibrium measure $\lambda_t$ are algebraic functions of the coefficients $t_k$ of the external field \eqref{phiPolyn}.

We conclude this section with two trivial but useful identities: 
\begin{equation} \label{polynomialpart}
\varphi'=\left(A^{1/2} B\right)_\oplus= \left(R^{1/2}\right)_\oplus , \quad B=\left(\frac{\varphi'}{A^{1/2}}\right)_\oplus, 
\end{equation}	
and
\begin{equation} \label{limitT0}
\lim_{t\to 0+} R_t(x)=\left(\varphi'\right)^2(x)
\end{equation}	
(see \eqref{RcoefGeneral}).

The $(A,B)$-representation introduced above has several advantages: it allows us to recast the discussion on the evolution of the support $S_t$ and of the phase transitions for the free energy in terms of the zeros   of $A$ and   $B$. This will be carried out in the next two subsections. A more general analysis of the variation of $S_t$ as a function of all coupling constants is done in Subsection \ref{subsec:variations}.

\subsection{ Theorem~\ref{lem:Buyarov/Rakhmanov:99} revisited} \label{subsec:dynamical}

Our first task is to rewrite the differentiation formulas of Theorem~\ref{lem:Buyarov/Rakhmanov:99} in terms of the zeros   of $A$ and   $B$, which can be regarded as phase coordinates of a dynamical system. 

Recall that if $S_t$ has the form \eqref{assupport}, then its Robin measure $\omega=\omega_{S_t}$ (see the definition in Section \ref{subsect:equilibrium_analytic}) is  
 \begin{equation*} 
 d\omega_{S_t}(x)= \frac{1}{\pi i}\, \frac{h(x)}{A^{1/2}_+(x)}\, dx=\frac{1}{\pi }\, \left| \frac{h(x)}{\sqrt{A  (x)}}\right| \, dx, \quad x \in S_t, 
\end{equation*}	
while for the complex Green function $G(z)=G_{S_t}(z;\infty)$ with pole at infinity we have
\begin{equation} \label{greenF}	
G'(z)=-C^\omega(z)= \frac{h(z)}{A^{1/2} (z)}, \quad z\in \C\setminus S_t.
\end{equation}
Here $h$ is a real monic polynomial of degree $p-1$;  for $p=1$, $h\equiv 1$, while for $p>1$ it is uniquely determined  by conditions
 $$
 \int_{a_{2k}}^{a_{2k+1}} \frac{h(x)}{A^{1/2} (x) } \, dx =0, \quad k=1, \dots, p-1
 $$
(an ``integrable-systems-oriented'' reader will find these conditions already in  \cite{Lax/Levermore}, formulas (5.20)--(5.21)).
It follows in particular that for $p\geq 2$,
 \begin{equation*} 
h(x)=\prod_{j=1}^{p-1} (x-\zeta_j), \quad \text{with} \quad \zeta_j\in [a_{2j}, a_{2j+1}], \quad j=1, \dots, p-1.
\end{equation*}

In order to denote the differentiation with respect to the total mass $t$ we will use either $d/dt$ or the dot over the function, indistinctly. Recall that $a_k$ are the zeros of $A$, and denote by $b_k$ the zeros of $B$.
\begin{theorem}\label{thm:Dynamical System-t}
If  $A$ and $B$ have no common zeros then
\begin{equation}
\label{odeB}
\begin{split}
\dot{a}_k & =   \frac{2 h(a_k)}{A'(a_k) B(a_k)}, \quad  k=1, \dots, 2p, \\
\dot{ b}_k & =    \Res_{x=b_k} \frac{  h(x) }{A(x) B(x) }  , \quad k=1, \dots, \deg (B) = 2m-p-1.
\end{split}
\end{equation}
If additionally all zeros of $B$ are simple, then the second set of equations in \eqref{odeB} simplifies to
$$
\dot{ b}_k   =    \frac{  h(b_k) }{A(b_k) B'(b_k) }  , \quad k=1, \dots, \deg (B)= 2 m-p-1.
$$
\end{theorem}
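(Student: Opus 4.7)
The plan is to compress Theorem \ref{lem:Buyarov/Rakhmanov:99} and the $R$-representation of Section \ref{subsec:R-representation} into one polynomial identity in $z$, and then read off both sets of ODE's by evaluating at the zeros of $A$ and of $B$. Writing $\dot f=\partial f(z;t)/\partial t$ with $z$ held fixed, the identity I expect is
$$
\dot A(z)\,B(z) \;+\; 2\,A(z)\,\dot B(z) \;=\; -2\,h(z),
$$
valid as an equality of polynomials in $z$.

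To derive it, I would start from Theorem \ref{lem:MFRa09a}. The branches fixed in Section \ref{subsec:R-representation} give
$$
C^{\lambda_t}(z)+\varphi'(z) \;=\; R_t^{1/2}(z) \;=\; A^{1/2}(z)\,B(z),
$$
the sign being pinned by the asymptotic behaviour as $z\to\infty$ (the leading monomial on each side is $z^{2m-1}$). Since $\varphi$ does not depend on $t$, differentiating in $t$ yields
$$
C^{\dot\lambda_t}(z) \;=\; \frac{\dot A(z)\,B(z) + 2\,A(z)\,\dot B(z)}{2\,A^{1/2}(z)}.
$$
By Theorem \ref{lem:Buyarov/Rakhmanov:99}, $\dot\lambda_t=\omega_{S_t}$, and formula \eqref{greenF} gives $C^{\omega_{S_t}}(z)=-G'_{S_t}(z)=-h(z)/A^{1/2}(z)$. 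Equating the two expressions and clearing the factor $2A^{1/2}(z)$ produces the claimed identity on $\C\setminus S_t$; both sides are in fact polynomials in $z$ (differentiating $R_t=AB^2$ shows $\dot R_t=B\,(\dot A B+2A\dot B)$, so $B\mid \dot R_t$ and the left-hand side equals $\dot R_t/B$), and the equality therefore extends to all of $\C$.

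With this identity in hand, both sets of ODE's are short evaluations. Setting $z=a_k$ annihilates the $2A\,\dot B$ term; since $a_k$ is a simple zero of $A$ and $B(a_k)\ne 0$ by the no-common-zero hypothesis, the relation $\dot A(a_k)=-\dot a_k\,A'(a_k)$, which follows from $A(z;t)=\prod_j(z-a_j(t))$, gives the first set. Setting $z=b_k$ kills $\dot A\,B$ and yields $\dot B(b_k)=-h(b_k)/A(b_k)$; when $b_k$ is a simple zero of $B$, the analogous $\dot B(b_k)=-\dot b_k\,B'(b_k)$ produces the simplified formula. The general residue statement follows by dividing the identity by $A(z)B(z)$ and taking residues at $z=b_k$: the term $\dot A/A$ is regular there since $A(b_k)\ne 0$, while $\Res_{z=b_k}(\dot B/B)=-\dot b_k$ (in the simple case) turns the right-hand side into the stated residue formula. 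The main bookkeeping obstacle I foresee is keeping the branches of $A^{1/2}$ and $R_t^{1/2}$ consistent throughout so that the central identity is a genuine polynomial equality; beyond that, the derivation is purely algebraic, with no further input from potential theory required.
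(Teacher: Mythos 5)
Your proposal reproduces the paper's own argument: you derive the same central polynomial identity $\dot A\,B + 2A\,\dot B = -2h$ from differentiating $C^{\lambda_t}+\varphi' = R_t^{1/2} = A^{1/2}B$ in $t$ and invoking Theorem \ref{lem:Buyarov/Rakhmanov:99} together with \eqref{greenF}, and then read off the ODEs by evaluating (equivalently, taking residues) at the zeros of $A$ and $B$. This is the same route the paper takes, with the only cosmetic difference being that you evaluate the polynomial identity directly at $z=a_k,b_k$ while the paper phrases the extraction as comparing residues of the rational form $\dot A/A + 2\dot B/B = -2h/(AB)$.
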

\begin{proof}
Taking the square root in both sides of \eqref{charactMeasure1} and  differentiating the resulting formula with respect to $t$ (recall that $\varphi$ does not depend on $t$), we get
$$
\frac{\dot R_t(z)}{2R_t^{1/2}(z)} = \frac d{dt} C^{\lambda_t}(z).
$$
By Theorem \ref{lem:Buyarov/Rakhmanov:99} and \eqref{greenF},
$$
\frac d{dt} C^{\lambda_t}(z) = C^\omega(z)= -G'(z) = -\frac {h(z)}{A^{1/2} (z)} , \quad z\in \C\setminus S_t,
$$
and further, with the help of \eqref{AB-repr},
\begin{equation} \label{dynamics}	
B \, \dot A + 2 A \,\dot B =   -2 h \qquad \text{or}\qquad
\frac{ \dot A} A + 2 \, \frac{\dot B} B =  -\frac{ 2 h }{AB}.
\end{equation}
Assertions of the theorem follow by equating residues of the rational functions in the last equation above.
\end{proof}

\begin{remark}
A subset of equations from \eqref{odeB}  corresponding to $\dot{a_k}$ has been obtained in several places before, in particular, in the relevant work  of Bleher and Eynard \cite{MR1986409}, although using a different approach.  
That paper is, also probably, one of the first works where some analytic properties of phase transitions in Dyson gases were rigorously studied.
It contains several noteworthy results, such as a discussion of the string equations and the  double scaling limit of the  correlation functions when simultaneously the volume goes to infinity and the parameter $t$ approaches its critical value with an appropriate speed. 

Equations \eqref{odeB} are similar in form to a system of ODEs studied by Dubrovin in \cite{Dubrovin:1975fk} for the dynamics of the Korteweg-de Vries equation in the class of finite-zone or finite-band potentials. Curiously, Dubrovin's equations govern the evolution of the ``spurious'' poles of the diagonal Pad\'e approximants to rational modifications of a Markov function (Cauchy transform $C^\mu$) \cite{MR1918205}, and are equivalent to the equations obtained by one of the authors in \cite{Rakhmanov:1977uq} in terms of the harmonic measure of the support of $\mu$.
\end{remark}

\subsection{Classification of singularities} \label{subsec:classification}

We will see in the sequel that conditions of Theorem \ref{thm:Dynamical System-t} are satisfied for any $t$ except for a finite number of values. These values of $t$  may be called critical; they correspond to some of the phase transitions but not to all of them. There are other significant values,  also presenting phase transitions, which  are not critical in the above mentioned sense. 

As it follows from Theorems \ref{lem:Buyarov/Rakhmanov:99} and \ref{thm:generalparameter} (see also \cite{MR2187941} and \cite[Theorem 1.3. (iii)]{MR1744002}), the endpoints of the support of the equilibrium measure, its density function, and the corresponding equilibrium energy (or the infinite volume free energy) are analytic functions of the coefficients of the external field, except for a finite number of values where the analyticity breaks down. These critical points divide the parameter space  into different phases of the model and represent phase transitions. Some of them (but not all) correspond also to the change of topology of the support of the equilibrium measure.

Recall the classification of the singularities of the equilibrium measure we summarized in Subsection \ref{sec:revisited}: 
within each of the three types there is an infinite discrete collection of cases, depending on the order of vanishing of either the density $\lambda_t'$ or the  function $W^{\lambda_t}(x)-c_t$ in $\R\setminus S_t$. In this paper we restrict our attention to the ``generic'' singularities, characterized by the lowest possible order of vanishing. ``Non-generic'' singularities, with a higher order vanishing, can be seen as a confluent case of two or more generic ones, and require additional considerations.

From the point of view of the evolution in $t$ and with the $R$- (or $(A,B)$-) representation at hand these generic singularities are now  classified as follows. 
\begin{itemize}
\item  \textbf{Singularity of type I} 
is a bifurcation $b \rightarrow (a^+,a^-)$, representing a birth of the cut; it is the event at  a critical time $t=T$ when a simple zero of $B$ ceases to exist and at its place two new zeros of $A$ are born. Formally, at $t=T$ the inequality \eqref{equilibrium1},
$$
W_\varphi^{\lambda_t}(x)=  V^{\lambda_t} (x) +\varphi (x)  \geq \max_{x\in S_t}W_\varphi^{\lambda_t}(x) ,
$$
is no longer strict in $\R\setminus S_t$, and the equality is attained at some point   $b \in \R\setminus S_t$, where we will have $B(b)=0$. At this point a bifurcation of the zero $b$ occurs.

A significant property of this situation is that the phase transition occurs by saturation of the inequality \eqref{equilibrium1}; the moment $t=T$ of the bifurcation is not defined by the dynamical system, i.e.~it is not its singular point. All the phase parameters $a_j, b_k$ may be analytically continued through $t=T$. The solution of the system for $t>T$ would give us a critical, but not an equilibrium, measure.

 \item \textbf{Singularity of type II} 
 is the opposite event $(a^+,a^-) \rightarrow b $, or fusion of two cuts, consisting in the collision and subsequent disappearance of two zeros of $A$ (note that a zero of the complex Green function, ``trapped'' in the closing gap, disappears simultaneously), and an  appearance of a (double) zero of $B$, followed by the splitting of this real double zero into two complex simple zeros. The collision of two zeros of $A$ is a critical point of the dynamical system. 

We note that as a rare event (event of a higher co-dimension) it may happen that a number of  other cuts were present  in the vanishing gap $ (a^{-} ,a^+) $ 
immediately before the collision; they all disappear at the moment of collision of $a^-$ and $a^+$. This will be accompanied by an appearance for a moment of a zero of $B$ of an even multiplicity higher than two.
 
Thus, a double zero (or of an even higher multiplicity)  of $B$ is present at the moment of collision inside $S_t$.  
According to   \eqref{(A,B)-representation}, the density of $\lambda_t$ vanishes at these points with an integer even order. 
 
 \item \textbf{Singularity of type III} are the endpoints of $S_t$ where $R_t$ has a multiple zero; they correspond to the case when $A$ and $B$ in \eqref{AB-repr} have a common real zero.
 
\end{itemize}

Additionally, a special situation is created when two complex-conjugate simple zeros of $B$ collide on $\R\setminus S_t$, and either bounce back to the complex plane or continue their evolution as two real simple roots of $B$ (at this moment, two new local extrema of the total potential $W^{\lambda_t}$ in $\R\setminus S_t$ are born). At these values of the parameters the free energy is still analytic, so this is not a phase transition in the sense we agreed to use in this study, although the colliding zeros of $B$ lose analyticity with respect to $t$. In a certain sense, the singularity of type III is a limiting case of this phenomenon, when it occurs at an endpoint of $S_t$.

It is worth insisting that, according to \cite{MR1744002} and our analysis, the three singularities listed above are the only mechanisms of phase transitions in the parameter  $t$, described using the  $R$-representation. In this sense, no extra assumptions are made beyond the ``generic'' character of each singularity, as explained.

The reader should be aware of a certain freedom in our terminology regarding the singular points: we refer to singularities meaning both the value of the parameter $t$ at which a bifurcation occurs, and the point on $\R$ where the actual bifurcation takes place. We hope that the correct meaning in each case is clear from the context and will not lead the reader to confusion.
  
We return to the analysis of the local behavior at the singularities of the system in terms of $t= t_0$ in Section \ref{sec:phasetrans}.

\subsection{Other variations of the external field} \label{subsec:variations}

So far we have been regarding the equilibrium measure $\lambda_t$ and its $R$-  and $(A,B)$-representations as functions of the total mass $t$, assuming that $\varphi$ is fixed. Now we discuss a more general problem: the dependence of $\lambda$ and, correspondingly,  of $R$, $A$,  and $B$ from all the coefficients $t_j$ of the external field \eqref{phiPolyn}. It is a remarkable fact that the differentiation formulas with respect to the coupling constants $t_j$, $ j = 1,2,\dots$, have the same form as the differentiation formula   with respect to $t=t_0$,  and  that  they all can be obtained in a unified way.

We start with a simple technical  observation:
\begin{lemma}\label{existence_H}
Given  points $a_1<a_2<\dots<a_{2p}$ and the corresponding monic polynomial $A(z)= \prod_{k=1}^{2p}\,(z-a_k)$, there exist polynomials $h_j$, $\deg (h_j)=j+p-1$,  $j=0, 1,  \dots$,  uniquely determined by the following conditions:
\begin{equation}\label{cond2_hk}
h_0(z)=-z^{p-1}+\dots, \qquad  \frac{h_j(z)}{A^{1/2} (z) }   =j z^{j-1} +\mathcal O\left( \frac{1}{z^2}\right), \quad z\to \infty, \quad j\geq 1, 
\end{equation}
and 
\begin{equation}\label{cond1_hk}
\int_{a_{2k}}^{a_{2k+1}} \frac{h_j(x)}{A^{1/2} (x) } \, dx   =0, \quad k=1, \dots, p-1, \quad j=0, 1,  \dots.
\end{equation}
\end{lemma}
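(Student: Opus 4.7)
Plan. The argument splits into a linear-algebra reduction and the injectivity of a hyperelliptic period map. First, the polynomials of degree at most $j+p-1$ form a vector space of dimension $j+p$, and I would check that conditions \eqref{cond2_hk} and \eqref{cond1_hk} cut out exactly $j+p$ linear equations, so that the system is square. The infinity condition contributes $j+1$ of them: for $j=0$ simply the leading coefficient, and for $j\geq 1$, via the Laurent expansion $1/A^{1/2}(z) = z^{-p}(1 + \gamma_1 z^{-1} + \gamma_2 z^{-2} + \cdots)$, the matching of the coefficients of $z^{j-1}, z^{j-2}, \ldots, z^{-1}$ in $h_j/A^{1/2}$ with $j, 0, \ldots, 0$. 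These $j+1$ equations are upper-triangular in the top $j+1$ coefficients of $h_j$ and hence uniquely pin those down. What remains free is the low-degree part $\tilde h \in \mathcal{P}_{p-2}$ (polynomials of degree at most $p-2$), a space of dimension $p-1$.

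Substituting the resulting decomposition into \eqref{cond1_hk} turns the period conditions into a square $(p-1)\times(p-1)$ linear system on $\tilde h$, so the lemma reduces to invertibility of the period map
\[
\Pi : \mathcal{P}_{p-2} \longrightarrow \C^{p-1}, \qquad h \longmapsto \left( \int_{a_{2k}}^{a_{2k+1}} \frac{h(x)}{\sqrt{A(x)}}\, dx \right)_{k=1}^{p-1}.
\]
The case $p=1$ is trivial: no period conditions occur, and $h_0\equiv -1$, $h_j(z)=jz^{j-1}$ for $j\geq 1$ satisfy \eqref{cond2_hk}--\eqref{cond1_hk}. Since for $p\geq 2$ the domain and codomain of $\Pi$ have equal dimension $p-1$, it suffices to show $\ker\Pi=\{0\}$.

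I would prove this injectivity via the hyperelliptic Riemann surface $\mathcal R:\, y^2=A(z)$, of genus $g=p-1$. The space of holomorphic 1-forms on $\mathcal R$ is precisely $\{\omega_h := h(z)\,dz/y :\, h \in \mathcal{P}_{p-2}\}$: regularity at the branch points $a_k$ is immediate in the local parameter $t=\sqrt{z-a_k}$, and the decay $h/\sqrt{A}=O(z^{-2})$ gives holomorphy at the two points at infinity. With a standard symplectic basis $\{\alpha_k,\beta_k\}_{k=1}^g$ of $H_1(\mathcal R,\Z)$ in which $\beta_k$ descends to the second sheet across $[a_{2k-1},a_{2k}]$, traverses the real gap $(a_{2k},a_{2k+1})$, and returns across $[a_{2k+1},a_{2k+2}]$, one computes $\oint_{\beta_k}\omega_h = \pm\, 2\int_{a_{2k}}^{a_{2k+1}} h/\sqrt{A}\,dx$. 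Hence $h\in\ker\Pi$ yields a holomorphic differential on $\mathcal R$ with all $\beta$-periods vanishing, and the Riemann bilinear inequality then forces $\omega_h\equiv 0$, i.e., $h\equiv 0$.

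The main obstacle, and the only nontrivial analytic input, is this period-map injectivity via the bilinear relations; once $\Pi$ is known to be invertible, both existence and uniqueness of each $h_j$ follow by pure dimension counting. The remaining work—the Laurent expansion at infinity producing a triangular block of equations, and the assembly of the two blocks into a square system—is routine bookkeeping.
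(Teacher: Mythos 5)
Your linear-algebra reduction is exactly the paper's: split the coefficients of $h_j$ into a ``high'' block pinned by a triangular system coming from the Laurent expansion at infinity, and a ``low'' block in $\mathcal P_{p-2}$ constrained by the $p-1$ period conditions, so that everything reduces to showing the period map $\Pi:\mathcal P_{p-2}\to\C^{p-1}$ has trivial kernel. Where you diverge is in establishing that last injectivity. The paper's argument is strictly elementary: a homogeneous solution $h$ satisfies $h/A^{1/2}=\mathcal O(z^{-2})$, hence $\deg h\le p-2$; but $A(x)>0$ on every gap $(a_{2k},a_{2k+1})$, so each vanishing period forces a sign change of $h$ there, giving $h$ at least $p-1$ real zeros — impossible unless $h\equiv0$. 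You instead interpret $h\,dz/\sqrt{A}$ as a holomorphic differential on the genus-$(p-1)$ hyperelliptic surface $y^2=A(z)$ and invoke the Riemann bilinear inequality to kill a differential with vanishing half-period lattice. Both are correct; the Riemann-surface route is heavier machinery but extends without change to non-real branch points, whereas the paper's sign argument is shorter, avoids any surface theory, and is entirely adapted to the real-line setting of the lemma.

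One small slip: for $p=1$ you assert $h_j(z)=jz^{j-1}$ for $j\ge1$. Since $A^{1/2}(z)\sim z$ for $p=1$, that would give $h_j/A^{1/2}\sim jz^{j-2}$, not $jz^{j-1}+\mathcal O(z^{-2})$; the correct $h_j$ has degree $j$ (e.g.\ $h_1(z)=z-(a_1+a_2)/2$, cf.\ the example after Theorem~\ref{thm:odeBis}), and is obtained by actually solving the triangular system rather than by inspection. This does not affect the general argument — for $p=1$ there are no period conditions and the triangular block is square and invertible, so existence and uniqueness are automatic — but the explicit formula you quote is wrong.
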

\begin{proof}
Clearly, $h_0(x)=-h(x)$, where $h$ is the numerator of $G'$ introduced in Subsection \ref{subsec:dynamical}, see \eqref{greenF}. 

Furthermore, for each $j\in \N$,  \eqref{cond2_hk} means that the coefficients corresponding to powers $z^{-1}$, $1$, $z, \dots, z^{j-2}$ of the Laurent expansion of $h_j A^{-1/2}$ at infinity vanish, which renders $j$ linear equations, additional to $p-1$ linear equations \eqref{cond1_hk} on the $p+j-1$ coefficients of $h_j$. The corresponding homogeneous linear equations are obtained by setting
$$
\frac{h_j(z)}{A^{1/2} (z) }   = \mathcal O\left( \frac{1}{z^2}\right),   \quad z\to \infty, 
$$
along with \eqref{cond1_hk}. In particular, every $h_j$ is of degree at most $p-2$, and according to \eqref{cond1_hk}, has a zero in each interval $(a_{2k}, a_{2k+1})$, $k=1,\dots, p-1$, which yields only the trivial solution for this system. 
\end{proof}

Since we are going to write all differentiation formulas in a unified way, we prefer to use here the notation
\begin{equation*} 
\partial_j := \frac{\partial }{\partial t_j}, \quad j=0, 1, \dots, 2m-1 ,
\end{equation*}
with $t_0=t$.
\begin{theorem}\label{thm:generalDiff}
Let the polynomial external field $\varphi$ be given by \eqref{phiPolyn}--\eqref{normalizationLeading}. Let also $\lambda_t=\lambda_t(\varphi,\R)\in \mathcal M_t$ denote the corresponding equilibrium measure of mass $t>0$, and let polynomials $A$ and $B$ be the $(A,B)$-representation of this equilibrium measure, see \eqref{(A,B)-representation}. 

Then 
\begin{equation} \label{derivParamGeneral}	
\partial_j \left( C^{\lambda_t} + \varphi'\right)(z)=\frac{h_j(z)}{A^{1/2} (z) }, \quad z\in \C\setminus S_t , \quad j=0, 1, \dots, 2m-1,
\end{equation}
where polynomials $h_j$ are given in Lemma \ref{existence_H}.

Moreover,
\begin{equation} \label{Euler1}	
\sum_{j=0}^{2m-1} t_j\,  \partial_j \left( C^{\lambda_t} + \varphi'\right)(z)=\left( C^{\lambda_t} + \varphi'\right)(z), \quad z\in \C\setminus S_t .
\end{equation}

In consequence, 
\begin{equation} \label{dynamics0}	
B \, \partial_j A + 2 A \,\partial_j B =  2 h_j, \quad j=0, 1,  \dots, 2m-1,
\end{equation}
and
 \begin{equation} \label{Euler2}	
\sum_{j=0}^{2m-1} t_j\,  h_j (z)=A(z) B(z).
\end{equation}
\end{theorem}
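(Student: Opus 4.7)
The overall strategy is to concentrate effort on the differentiation formula \eqref{derivParamGeneral}; the remaining identities follow by short manipulations. Identity \eqref{dynamics0} comes from differentiating $R_t = AB^2$ in two ways: directly it reads $\partial_j R_t = B\,(B\,\partial_j A + 2A\,\partial_j B)$, while via $R_t = F^2$ with $F := C^{\lambda_t}+\varphi'$ it reads $\partial_j R_t = 2F\,\partial_j F = 2(A^{1/2}B)(h_j/A^{1/2}) = 2 B h_j$; cancelling $B$ yields \eqref{dynamics0}. The Euler-type identity \eqref{Euler1} is a homogeneity statement: under the global rescaling $t_j \mapsto s t_j$ one has $\varphi \mapsto s\varphi$ and $t \mapsto st$, and the equilibrium conditions \eqref{equilibrium1} immediately give $\lambda_{st}(s\varphi) = s\,\lambda_t(\varphi)$, so $F \mapsto sF$, whence Euler's theorem for degree-one homogeneous functions applies. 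Combining \eqref{derivParamGeneral} with \eqref{Euler1} gives $\sum_j t_j\,h_j/A^{1/2} = F = A^{1/2} B$, which is \eqref{Euler2}.

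It remains to establish \eqref{derivParamGeneral}. The case $j = 0$ is direct from Theorem \ref{lem:Buyarov/Rakhmanov:99}: $\partial_0 \lambda_t = \omega_{S_t}$ has Cauchy transform $-G'(z) = -h(z)/A^{1/2}(z) = h_0(z)/A^{1/2}(z)$, and $\partial_0 \varphi' = 0$. For $j \geq 1$, I apply Theorem \ref{thm:generalparameter} with $\psi(x) = x^j$ to obtain that the signed measure $\mu := \partial_j \lambda_t$ is uniquely characterized by $\supp(\mu) \subset S_t$, $\mu(S_t) = 0$, and $V^\mu(x) + x^j \equiv$ const on $S_t$. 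Set $F_j(z) := C^\mu(z) + jz^{j-1}$, which equals $\partial_j F(z)$.

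The identification $F_j = h_j/A^{1/2}$ is then a scalar Riemann--Hilbert argument. Differentiating the equilibrium equality along $\mathrm{int}(S_t)$ gives $C^\mu_+ + C^\mu_- = -2jx^{j-1}$, hence $F_{j,+} + F_{j,-} \equiv 0$ there. Since $A^{1/2}_+ = -A^{1/2}_-$ on $S_t$, the product $F_j\cdot A^{1/2}$ extends holomorphically across $S_t$, and a local analysis at each endpoint $a_k$ shows that the at-worst $(z-a_k)^{-1/2}$ singularity of $F_j$ is exactly absorbed by the vanishing of $A^{1/2}$, so $F_j A^{1/2}$ is entire. Neutrality of $\mu$ forces $F_j(z) = jz^{j-1} + O(1/z^{2})$ at infinity, hence $F_j A^{1/2}$ is a polynomial of degree $j+p-1$ with leading coefficient $j$. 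The orthogonality on gaps is obtained by integrating $F_j$ between successive cuts and using $V^\mu(a_{2k+1}) - V^\mu(a_{2k}) = a_{2k}^j - a_{2k+1}^j$, which gives $\int_{a_{2k}}^{a_{2k+1}} F_j(x)\,dx = 0$. Uniqueness in Lemma \ref{existence_H} then identifies $F_j A^{1/2}$ with $h_j$.

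The one technical point requiring real care is the removal of apparent singularities at the endpoints $a_k$: one needs $\mu = \partial_j \lambda_t$ to have at worst an integrable $(z-a_k)^{-1/2}$ singularity, so that $F_j A^{1/2}$ is genuinely entire rather than merely meromorphic. This can be read off the $(A,B)$-representation \eqref{(A,B)-representation}, since differentiating $A^{1/2}_+(x)$ at a zero $a_k$ of $A$ with respect to a parameter that moves $a_k$ produces exactly this kind of singularity and nothing worse. Once this local regularity is in hand, the rest of the argument is a clean synthesis of Theorems \ref{lem:Buyarov/Rakhmanov:99} and \ref{thm:generalparameter} with the uniqueness supplied by Lemma \ref{existence_H}.
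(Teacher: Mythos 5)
Your proof follows the same overall architecture as the paper's: establish \eqref{derivParamGeneral} first, then deduce \eqref{dynamics0} by differentiating $R_t = AB^2 = (C^{\lambda_t}+\varphi')^2$ in two ways, obtain \eqref{Euler1} from degree-one homogeneity and Euler's theorem, and get \eqref{Euler2} by substitution. For $j=0$ both proofs quote Theorem \ref{lem:Buyarov/Rakhmanov:99}. The only place where you genuinely diverge from the paper is in the identification of $\partial_j(C^{\lambda_t}+\varphi')$ with $h_j/A^{1/2}$ for $j\geq 1$. The paper works on the level of the logarithmic potential: it introduces $G_j(z)=\int_{a_{2p}}^z h_j/A^{1/2}$, observes from \eqref{cond2_hk}--\eqref{cond1_hk} that $\Re G_j$ is single-valued, continuous, vanishes on $S_t$, and behaves like $\Re(z^j)+\mathcal O(1)$ at infinity, and then invokes the uniqueness in the characterization \eqref{determinationMu} to conclude $\Re G_j = V^{\omega_j}+\Re(z^j)-\hat c_j$. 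You instead argue on the level of the Cauchy transform $F_j = C^\mu + jz^{j-1}$: you use the jump relation $F_{j,+}+F_{j,-}=0$ on $S_t$ to show $F_j A^{1/2}$ continues across $S_t$, the neutrality of $\mu$ and \eqref{cond2_hk} to pin down the degree and leading coefficient, the vanishing of $\int_{a_{2k}}^{a_{2k+1}} F_j$ to match \eqref{cond1_hk}, and then appeal to the uniqueness in Lemma \ref{existence_H}. Both are correct in spirit, and you trade the paper's potential-theoretic maximum principle for a scalar Riemann--Hilbert argument.

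However, the Riemann--Hilbert route forces you to confront the one point you yourself flag: to conclude that $F_j A^{1/2}$ is entire rather than merely meromorphic you must control the singularity of $C^\mu$ at each $a_k$. Your justification --- that differentiating $A^{1/2}_+$ with respect to a parameter that moves $a_k$ produces exactly a $(x-a_k)^{-1/2}$ singularity --- presupposes that $a_k$ is a smooth function of $t_j$, but that differentiability is precisely what the downstream Theorem \ref{thm:odeBis} is designed to deliver, so as written the argument is circular. You can repair this either by invoking the implicit-function-theorem argument for analyticity of the endpoints in the coupling constants away from phase transitions (which the paper cites from \cite{MR2187941} and \cite{MR1744002}), or by noting, as the paper does, that the characterization \eqref{determinationMu} from Theorem \ref{thm:generalparameter} already determines $\mu$ uniquely, so it suffices to check that the candidate measure with Cauchy transform $-h_j/A^{1/2}$ on $S_t$ satisfies those conditions --- this is exactly why the paper works with $G_j$ rather than with $\mu$ directly, and it bypasses the endpoint regularity issue entirely. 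With either of these fixes your proof becomes complete; as it stands, the endpoint step needs to be tightened.
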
 
\begin{proof}
For $j=0$, formulas \eqref{derivParamGeneral} and \eqref{dynamics0} are just a restatement of Theorem \ref{thm:Dynamical System-t}.
 
Furthermore, from Theorem \ref{thm:generalparameter} it follows that for $j=1, \dots, 2m-1$,
$$
\partial_j C^{\lambda_t}(z)=C^{\partial_j \lambda_t}(z)=C^{\omega_{j}}(z),
$$
where $\omega_j$ is a signed measure on $S_t$ satisfying 
\begin{equation} \label{equilCondonK}	
\omega_j(S_t)=0, \quad V^{\omega_j}(x)+x^j=\hat c_j =\const \text{ on } S_t.
\end{equation}

It follows from \eqref{cond2_hk}--\eqref{cond1_hk} that the multivalued analytic function
$$
G_j(z)=\int_{a_{2p}}^z \frac{h_j(x)}{A^{1/2} (x) } \, dx, \quad z\in \C\setminus S_t,
$$
has a single-valued real part, which is continuous in $\C$ and satisfies
$$
\Re G_j(z)=0,\quad x\in S_t, \quad \text{ and } \quad G_j(z)=z^j + \mathcal O\left( 1 \right), \quad z\to \infty. 
$$
Taking into account \eqref{equilCondonK} we conclude that
$$
\Re G_j(z)=V^{\omega_j}(z)+z^j-\hat c_j , \quad z\in \C.
$$
Since for $j=1, \dots, 2m-1$, 
$$
\partial_j \varphi(x)=x^j, \quad \partial_j \varphi'(x)=j x^{j-1},
$$
identity of \eqref{derivParamGeneral} follows for all remaining $j$'s.

Finally, \eqref{homogeneity} shows that $C^{\lambda_t} + \varphi'$ is a homogeneous function of degree 1 of the vector of coupling constants $t_0, t_1, \dots$, so that \eqref{Euler1} is just Euler's theorem for such a function. Formula \eqref{Euler2} is obtained by replacing \eqref{charactMeasure1} and \eqref{derivParamGeneral}  correspondingly in the left and right hand sides of \eqref{Euler1}.
\end{proof}

Evaluating  \eqref{Euler2} at the zeros of $R_t$ we obtain a set of algebraic identities,
\begin{align}\label{hodograph1}
&\sum_{j=0}^{2m-1} t_j\,  h_j (a_k)  = 0, \quad k=1, \dots, 2p, \\
& \sum_{j=0}^{2m-1} t_j\,  h_j (z)\bigg|_{B(z)=0}   = 0, \nonumber 
\end{align}
called \emph{hodograph equations} in \cite{MR2629605}. 
Solving them we could find the main parameters of the equilibrium measure and of its support. However, their explicit character is misleading: in the multi-cut case the dependence of the coefficients of $h_j$ from the coupling constants $t_j$'s is highly transcendental, and as the authors of \cite{MR2629605}  point out, except for the simplest examples, equations \eqref{hodograph1} are extremely difficult to solve, ``even by numerical methods''.

\begin{remark} \label{remark:Riemann}  
Alternatively, following the general methodology put forward in \cite{MR961760}, we can derive the identities on the endpoints of the connected components of $S_t$ considering the hyperelliptic Riemann surface $\mathcal R$ of $w^2=A(z)$ and its deformations depending on the set of coupling constants  $t_j$, imposing the condition that the partial derivatives with respect to the parameters $t_j$ of the corresponding normalized Abelian differentials of this surface are given by a meromorphic differential $\mathcal D$ on $\mathcal R$. This is equivalent to the set of the so-called  \emph{Whitham equations}  \cite{MR2061477} on $\mathcal R$. Actually, polynomials $h_j$ defined in Lemma \ref{existence_H}, appear in the explicit representation of these normalized Abelian differentials of the third  ($j=0$)  and second kind ($j\in \N$) on $\mathcal R$. The key connection with the equilibrium problem is provided by identity \eqref{charactMeasure1}, which shows that  the  differential $(\varphi'(z)+ R_t^{1/2}(z) )\, dz$, with $\varphi$ given by \eqref{phiPolyn}, can be extended as the above mentioned meromorphic differential $\mathcal D$ on $\mathcal R$. 
This approach was used in \cite{MR2240464} to obtain in particular an analogue of \eqref{derivParamGeneral}, and developed further  in  \cite{MR2629605}.
 \end{remark}

Again, a direct consequence of Theorem \ref{thm:generalDiff} is the possibility to rewrite the differentiation formulas \eqref{dynamics0} in terms of the zeros $a_k$ of $A$ and $b_k$ of $B$: 
\begin{theorem}\label{thm:odeBis}
If under assumptions of Theorem \ref{thm:generalDiff},  $A$ and $B$ have no common zeros then for $j=0, 1,  \dots, 2m-1$, 
\begin{equation}
\label{odeBis}
\begin{split}
\partial_j a_k & =-   \frac{2 h_j(a_k)}{A'(a_k) B(a_k)}, \quad  k=1, \dots, 2p, \\
\partial_j b_k & =  -  \Res_{x=b_k} \frac{  h_j(x) }{A(x) B(x) }  , \quad k=1, \dots, \deg (B).
\end{split}
\end{equation}
If additionally all zeros of $B$ are simple, then the second set of equations in \eqref{odeBis} simplifies to
$$
\partial_j b_k   =  -  \frac{  h_j(b_k) }{A(b_k) B'(b_k) }  , \quad k=1, \dots, \deg (B).
$$
\end{theorem}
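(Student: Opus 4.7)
The plan is to derive both formulas purely algebraically from the polynomial identity
\[
B\,\partial_j A + 2 A\,\partial_j B = 2 h_j, \qquad j=0,1,\dots,2m-1,
\]
already established as \eqref{dynamics0} in Theorem~\ref{thm:generalDiff}, by converting it into an identity between rational functions and reading off residues. This mirrors the proof of Theorem~\ref{thm:Dynamical System-t}, which is exactly the $j=0$ case of the present statement (since $h_0=-h$).

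First, I would divide \eqref{dynamics0} by $AB$ to obtain
\[
\frac{\partial_j A}{A} + 2\,\frac{\partial_j B}{B} = \frac{2 h_j}{AB}.
\]
Under the hypothesis that $A$ and $B$ share no zeros, the two terms on the left have disjoint sets of poles, located respectively at the zeros $a_k$ of $A$ and $b_k$ of $B$; moreover $B(a_k)\neq 0$ and $A(b_k)\neq 0$, so the right-hand side has simple (or higher order, in the case of multiple $b_k$'s) poles exactly at the same points. The claim then follows by equating residues at each $a_k$ and each $b_k$.

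For the $a$-equations, I would use that the identity $A(a_k(t);t)\equiv 0$ in $t$, once differentiated with respect to $t_j$, yields
\[
(\partial_j A)(a_k) = -\,A'(a_k)\,\partial_j a_k,
\]
so the residue of $\partial_j A/A$ at $a_k$ equals $-\partial_j a_k$, while $2\partial_j B/B$ is holomorphic there. Matching with the residue $2h_j(a_k)/(A'(a_k)B(a_k))$ of the right-hand side gives the first formula. For the $b$-equations under the simplicity assumption, the same mechanism applied to $B(b_k(t);t)\equiv 0$ yields a residue of $\partial_j B/B$ equal to $-\partial_j b_k$, while $\partial_j A/A$ is holomorphic at $b_k$; matching with $h_j(b_k)/(A(b_k)B'(b_k))$ gives the simple-zero form. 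In the general case one simply keeps the residue $\mathrm{Res}_{x=b_k} h_j(x)/(A(x)B(x))$ unevaluated, which is what the statement asserts.

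The only genuinely delicate point is the bookkeeping when $B$ has a zero of multiplicity $m\geq 2$ at some $b_k$: one needs to check that the factorization $B(z;t)=(z-b_k(t))^m B_*(z;t)$ with $B_*(b_k)\neq 0$ persists locally in $t$ so that $\partial_j b_k$ is well defined, and then verify that the residue of $\partial_j B/B$ at $b_k$ is precisely $-m\,\partial_j b_k$, which matches the residue on the right-hand side after the factor $2$ is absorbed. Apart from this, the argument is a direct residue calculus consequence of Theorem~\ref{thm:generalDiff} and requires no new ingredients.
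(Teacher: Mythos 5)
Your proof is correct and follows essentially the same route as the paper: divide \eqref{dynamics0} by $AB$ to obtain \eqref{consequenceThm4}, observe that $\partial_j A/A$ and $\partial_j B/B$ are logarithmic derivatives whose residue at a simple zero $a_k$ (resp.\ $b_k$) is $-\partial_j a_k$ (resp.\ $-\partial_j b_k$), and match residues with $2h_j/(AB)$. Your use of implicit differentiation of $A(a_k(t);t)\equiv 0$ is equivalent to the paper's expansion $\partial_j A/A=\partial_j\log A=-\sum_k \partial_j a_k/(x-a_k)$, so no real difference there. One small caution: in the multiple-zero bookkeeping, if $B$ has a zero of order $m$ at $b_k$, the residue of $2\,\partial_j B/B$ there is $-2m\,\partial_j b_k$ (or $-2\sum \partial_j b_{k_i}$ over the coinciding roots), which does not literally reproduce $\partial_j b_k=-\Res_{x=b_k}h_j/(AB)$ but rather a version with an extra $1/m$; the paper's own statement and proof are also informal at a confluent zero (and the paper even remarks after the proof that the statement must be ``modified'' when zeros of $A$ and $B$ coincide). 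For the generic simple-zero situation, which is what the theorem really targets, your argument is complete.
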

\begin{proof}
It is a consequence of \eqref{dynamics0} that
\begin{equation} \label{consequenceThm4}	
\frac{ \partial_j A}{A}(x) + 2 \frac{\partial_j B}{B}(x) =  2 \frac{h_j}{AB}(x), \quad j=0, 1,  \dots, 2m-1.
\end{equation}
Observe that
$$
\frac{ \partial_j A}{A}(x) =   \partial_j \log(A ) (x)=\sum_{k=1}^{2p}\partial_j \log(x-a_k ) =- \sum_{k=1}^{2p}\frac{\partial_j a_k }{x-a_k}.
$$
Analogous formula is valid for $\partial_j B/B$. Hence, both the left and the right hand sides in \eqref{consequenceThm4} are rational functions in $x$, with possible poles only at the zeros of $A$ and $B$. The necessary identities are established by comparing the corresponding residues at each pole.

For instance, with the assumption that the zeros of $A$ and $B$ are disjoint, the residue of the left hand side of \eqref{consequenceThm4} at $x=a_k$ is equal to $-\partial_j a_k$, which yields the first set of equations in \eqref{odeBis} (recall that by construction, all zeros of $A$ are simple). The analysis of the residues at $x=b_k$ gives us the remaining identities.
\end{proof}
The proof  shows how the statement can be modified in the case of   coincidence of some zeros of $A$ and $B$ (in other words, in the case of roots of $R_t$ of degree higher than 2). Moreover, the evolution of $R_t$ is such that as long as the right-hand sides in \eqref{odeBis} remain bounded, all zeros of $R_t$, and hence, $R_t$ itself, are in $C^1$.

We can rewrite the equations on $a_k$'s in \eqref{odeBis} in a weaker form:
$$
h_i(a_k)\, \partial_j a_k  =    h_j(a_k) \, \partial_i a_k, \quad  k=1, \dots, 2p, \quad i, j=0, 1,  \dots, 2m-1,
$$
which are the \emph{Whitham equations in hydrodynamic form} (see \cite{Lax/Levermore}, and also \cite[Eq.~(77)]{MR2629605}).
 
\begin{example}
The simplest case to consider is when $A(x)=(x-a_1)(x-a_2)$, so that $S_t$ consists of a single interval $[a_1,a_2]$. In this situation, $h_0(x)=-1$, $h_1(x)=x-(a_1+a_2)/2$, and under assumptions that $A$ and $B$ have disjoint zeros we obtain from  \eqref{odeBis} that for $ j=1,   2$,
\begin{equation*}
\begin{split}
\partial_0 a_j   =    \frac{2 }{A'(a_j) B(a_j)} \quad \text{and} \quad \partial_1 a_j   =  -   \frac{  1}{  B(a_j) }  .
\end{split}
\end{equation*}
This can be rewritten as a system of PDE, 
\begin{equation*}
\begin{split}
\partial_1 a_1 - \frac{a_2-a_1 }{2}\, \partial_0 a_1 & = 0,     \\
\partial_1 a_2 + \frac{a_2-a_1 }{2}\, \partial_0 a_2 & = 0,
\end{split}
\end{equation*}
which is a rescaled form of the continuum limit of the Toda lattice  in Riemann invariant form (see \cite[Chapter 2]{Deift98}). This set of equations for the endpoints of $S_t$ in the one-cut case appears also in \cite{MR2350906}, \cite{MR2262808}, \cite{MR1877482}.

In an analogous fashion,
\begin{align*}
h_2(x)& =2 x^2 -(a_1+a_2) x -\frac{(a_1-a_2)^2}{4}, \\
h_3(x)& =3 x^3   - \frac{3}{2} (a_1 + a_2) x^2 - \frac{3}{8} (a_1 - a_2)^2 x -\frac{3}{16} (a_1 - a_2)^2 (a_1 + a_2),
\end{align*}
which yields the following differential relations:
\begin{equation} \label{newDiffRelat2_1}
\begin{array}{l} \displaystyle 
\partial_2 a_1 =- \frac{ 3a_1+a_2 }{2 B(a_1)} ,     \\
\displaystyle \partial_2 a_2 = -\frac{\strut a_1+3 a_2 }{2 B(a_2)} ,   
\end{array} \quad \text{and} \quad 
\begin{array}{l} \displaystyle 
\partial_3 a_1 =- \frac{ 3 (4a_1^2 +(a_1+a_2)^2) }{8 B(a_1)} ,     \\
\displaystyle \partial_3 a_2 = -\frac{\strut 3 (4a_2^2 +(a_1+a_2)^2)  }{8 B(a_2)} .
\end{array} 
\end{equation}
Again, these can be rewritten as dynamical systems,
\begin{equation*} 
\begin{array}{l} \displaystyle 
\partial_2 a_1 - \frac{(a_2-a_1)(3a_1+a_2) }{4}\, \partial_0 a_1   = 0,     \\
\displaystyle
\partial_2 a_2 + \frac{\strut (a_2-a_1)(a_1+3 a_2) }{4}\, \partial_0 a_2   = 0,
\end{array}
\end{equation*}
and
\begin{equation} \label{newDiffRelat2_3}
\begin{array}{l} \displaystyle 
\partial_3 a_1 - \frac{3(a_2-a_1)(4a_1^2 +(a_1+a_2)^2) }{16}\, \partial_0 a_1   = 0,     \\[2mm]
\displaystyle
\partial_3 a_2 + \frac{3(a_2-a_1)(4a_1^2 +(a_1+a_2)^2) }{16}\, \partial_0 a_2   = 0.
\end{array}
\end{equation}
\end{example}

\section{Local behavior at phase transitions in the polynomial external field} \label{sec:phasetrans}

In this section we take a closer look at the behavior of the equilibrium energy for the external field \eqref{phiPolyn}--\eqref{normalizationLeading} when these phase transitions occur, considering only the variation of the total mass (equivalently, temperature or time) $t$. As explained in Section~\ref{subsec:classification},  we restrict our attention to the case of generic singularities. Along this section we denote by $t=T$ the critical time at which the phase transition occurs. Using the $R$- and $(A, B)$-representations of the equilibrium measure $\lambda_t$  introduced in Section \ref{subsec:R-representation}, the phase transitions are classified as follows: 
\begin{itemize}
\item \textbf{Singularity of type I:} at a time $t=T$ a real zero $b$ of $B$ is an isolated point of the set $S^T\setminus S_T$ (in other words, $W_\varphi^{\lambda_T}(b)=c_T$, $b\notin S_T$; see \eqref{equilibrium1});  the only additional assumption is that for $t=T$, $b$ is a simple zero of $B$;
 \item \textbf{Singularity of type II:} at a time $t=T$, a real zero $b$ of $B$ (of even multiplicity) belongs to the interior of the support $S_T$; according to \eqref{(A,B)-representation}, the density of $\lambda_T$ vanishes in the interior of its support. The only additional assumption is that for $t=T$, $b$ is a double zero of $B$;
 \item \textbf{Singularity of type III:} at a time $t=T$, polynomials $A$ and $B$ have a common real zero $a$; the only additional assumption is that   $a$ is a double zero of $B$, so that $\lambda_T'(x)=\mathcal O(|x-a|^{5/2})$ as $x\to a$.
 \end{itemize}

This classification coincides with the one from  \cite{MR1744002}. It was shown there (see \cite[Theorem 1.3(iv)]{MR1744002}) that this  ``static'' definition is equivalent to the following ``dynamic'' description of the three types of singularities, again in terms of the $(A, B)$-representation:
\begin{itemize}
\item \textbf{Singularity of type I:} at a time $t=T$ a real zero $b$ of $B$ (a double zero of $R_t$) splits into two simple zeros $a_-<a_+$, and the interval $[a_-,a_+]$ becomes part of $S_t$ (\emph{birth of a cut});  the only assumption is that for $t$ in a left neighborhood of $t=T$, $b$ is a simple zero of $B$;
 \item \textbf{Singularity of type II:} at a time $t=T$ two simple zeros $a_{2s}$ and $a_{2s+1}$ of $A$ (simple zeros of $R_t$) collide (\emph{fusion of two cuts}).
 \item \textbf{Singularity of type III:} at a time $t=T$ a pair of complex conjugate zeros $b$ and $\overline b$ of $B$ (double zeros of $R_t$) collide with a simple zero $a$ of $A$, so that $\lambda_T'(x)=\mathcal O(|x-a|^{5/2})$ as $x\to a$.
 \end{itemize}
 
Additionally, in Subsection \ref{subs:IV} we analyze the scenario when at a time $t=T$ a pair of complex conjugate zeros $b$ and $\overline b$ of $B$ (double zeros of $R_t$) collide at $b_0\in \R\setminus S_t$ and either bounce back to the complex plane or become two simple real zeros $b_-<b_+$ of  $B$ (these real zeros are new local extrema of the total potential $W^{\lambda_t}$ on $\R\setminus S_t$). 
 
Obviously, for a general polynomial external field some of these phase transitions can occur simultaneously: for a given value $t=T$ two or more cuts could merge, while a new cut is open elsewhere, together with a type III singularity at some endpoints of $S_T$. Still, the basic ``building blocks'' of all phase transitions are precisely the  cases described above, which we proceed to study.

It was established in \cite{MR2187941} that in the second case, when at $t=T$ two components of $S_t$ merge into a single cut in such a way that $\varphi$ is regular for $t\in (T-\varepsilon, T)$, the equilibrium energy can be analytically continued through  $t=T$ from both sides. For the case of a quartic potential $\varphi$ it was shown in  \cite{MR1986409} that  the energy and its first two derivatives are continuous at $t=T$, but the third derivative has a finite jump. 
This is  a third order phase transition, observed also in a circular
ensemble of random matrices \cite{PhysRevD.21.446}. With respect to the singularity of type III, it is mentioned in \cite{Bleher:2008fk} that in this case the free energy is expected to have an algebraic singularity at $t = T$, ``but this problem has not been studied yet in details''.

In this section we extend this result to a general multi-cut case and show that for the three types of singularities the energy and its first two derivatives  (but in general not the third one) are continuous at the critical value $t=T$. The character of discontinuities is also analyzed, and we can summarize our findings as follows: in all cases there is a parameter, that we call $\delta$, expressing geometrically the ``distance'' to the singularity. In the case of a birth of a new cut, this is the size of this new component of $S_t$; for a fusion of two cuts, this is the size of the vanishing gap, while for the singularity of type III we can take it as the distance between two colliding zeros of $R_t$. In the three cases the first two derivatives of the equilibrium energy $I_{\varphi }[\lambda_t]$ are continuous, while the third derivative has a discontinuity $\Delta \dot{\rho}(S_t)$. We gather in the table below a rough information about the character of the dependence of these parameters from $\Delta t =|t-T|$ at the corresponding singularities, see Subsections \ref{subs:birth}--\ref{subs:III} for a detailed asymptotics and further discussion.
$$
\begin{tabular}{|c|c|c|c|}
\hline
 & \text{\textbf{Type I}} & \text{\textbf{Type II}} & \text{\textbf{Type III}}    \\[1mm] \hline
$\bm{\delta}$ & $\mathcal O(\sqrt{\Delta t /\log(\Delta t)}) \strut $  & $ \mathcal O(\sqrt{\Delta t })$ & $ \mathcal O( (\Delta t)^{1/3} )$   \\[1mm] \hline
$\bm{\Delta \dot{\rho}(S_t)}$ & $\mathcal O((\Delta t \log^2(\Delta t) )^{-1})$  &   $\mathcal O(1) \strut $  & $ \mathcal O( (\Delta t)^{-2/3} )$   \\[1mm] \hline
\end{tabular}
$$

Clearly, all zeros $a_k$, $b_k$ of $R_t$ (and hence, $R_t$ itself) are continuous functions of $t$. Moreover, it follows from \eqref{odeB}  that when a singularity occurs, all $a_k$'s and $b_k$'s not involved in the phase transition are smooth functions of $t$, with the degree of smoothness, roughly speaking, 1 more than the smoothness of $\delta$ in the corresponding column of the table above.

We insist that not all results here are new; in particular, phase transitions of Types II and III have been carefully analyzed in \cite[Section 8.1]{MR1744002}, and several entries in the table above are consistent with their findings.

\subsection{Singularity of type I: birth of a cut}\label{subs:birth}

Recall that at   $t=T$,  a real and simple zero $b$ of $B$ is an isolated point of the set $S^T\setminus S_T$, and that by \cite{MR1744002}, it implies that at this critical value of the parameter $t$,  a simple real zero $b\in \R\setminus S_T$ of $B$ (a double zero of $R_t$) splits into two simple zeros $a_-<a_+$, and the interval $[a_-,a_+]$ becomes part of $S_t$. 

First of all we want to estimate the size of the new cut as a function of $t$.
We use the notation introduced above, indicating explicitly the dependence from the parameter $t$. Let us remind the reader in particular that  polynomial $h$ is the numerator of the derivative of the complex Green function, see \eqref{greenF}.  From our assumptions it  follows that  for a small $\varepsilon >0$, there exist polynomials $\bm A$, $\bm B$ and $\bm h$, such that
\begin{align*}
A(x;t)&=\begin{cases}
\bm A(x;t), & t\in (T-\varepsilon, T],\\
  (x-a_-)(x-a_+)\bm A(x;t), & t\in (T, T+\varepsilon),\\
\end{cases} \\
B(x;t)&=\begin{cases}
(x-b) \bm B(x;t), & t\in (T-\varepsilon, T],\\
   \bm B(x;t), & t\in (T, T+\varepsilon),\\
\end{cases} \\ 
h(x;t)&=\begin{cases}
\bm h(x;t), & t\in (T-\varepsilon, T],\\
  (x-\zeta)\bm h(x;t), & t\in (T, T+\varepsilon),\\
\end{cases}
\end{align*}
where $a_\pm $, $b$ and $\zeta$ are real-valued continuous functions of $t$ such that 
$$
a_-(t=T+)=a_+(t=T+)=b(t=T-)=\zeta(t=T+);
$$ 
we denote this common value by $b_0$; this is the place where the new cut is born at $t=T$. Remember that it is imposed only by the saturation of the inequality constraint in \eqref{equilibrium1} outside of $S_t$. Polynomials $\bm A$, $\bm B$ and $\bm h$ are continuous with respect to the parameter $t\in (T-\varepsilon, T+\varepsilon)$, but represent, generally speaking,  different real-analytic functions of $t$ for $t<T$ and $t>T$. 

It is convenient to introduce two new variables for $t>T$:
\begin{equation} \label{def:zianddelta}	
\xi =\xi(t)=(a_++a_-)/2, \qquad \delta =\delta(t)=(a_+-a_-)/2.
\end{equation}

\begin{theorem}\label{thm:Deltaphase1}
The function, defined piecewise as
$$
\begin{cases}
b(t), & t\leq T, \\
\xi(t), & t>T,
\end{cases}
$$
is  $C^1$ in $(T-\varepsilon, T+\varepsilon)$. Furthermore, there exists a constant $C>0$, independent of $t$, such that
\begin{equation} \label{asymptoticsDeltaNewCut}	
\delta^2(t) = -C \frac{t-T}{\log(t-T)}\,\left(1+\mathcal O\left(\frac{\log \log (t-T)}{\log (t-T)}\right) \right), \quad t\to T+.
\end{equation}
\end{theorem}
Observe that formula \eqref{asymptoticsDeltaNewCut} is consistent with the $\log(n)/n$ scaling used e.g.~in \cite{MR2427458}.
\begin{proof}
Let us denote $\bm Q(x)=\bm A (x)\bm B(x)$, omitting from the notation when possible the explicit dependence on $t$. From \eqref{odeB} it follows that
\begin{align}
\nonumber 
\dot{ b} &=  \frac{h (b)}{\bm Q(b)}=  \frac{\bm h (b)}{\bm Q(b)}, \quad t\in (T-\varepsilon, T],
\\ 
\label{localA}	
\dot{a_\pm} &= \pm \frac{2 h (a_\pm )}{(a_+-a_-)\bm Q(a_\pm)} =  \pm \frac{2 (a_\pm -\zeta) \bm h (a_\pm )}{(a_+-a_-)\bm Q(a_\pm)}, \quad t\in (T, T+\varepsilon).
\end{align}
Adding both equations in \eqref{localA} we get
\begin{equation*}
\begin{split}
\dot{\xi } = & \frac{ 1}{a_+-a_-} \, \left( \frac{ h (a_+ )}{ \bm Q(a_+)} - \frac{ h (a_- )}{ \bm Q(a_-)} \right) \\
= &  \frac{ 1}{a_+-a_-} \, \left( \frac{(a_+-\zeta)  \bm h (a_+ )}{ \bm Q(a_+)} - \frac{ (a_--\zeta)  \bm h (a_- )}{ \bm Q(a_-)} \right) , \quad t\in (T, T+\varepsilon), 
\end{split}
\end{equation*}
which shows  that
$$
\lim_{t\to T+} \dot{\xi }  =  \left( \frac{ h( x)}{ \bm Q(x)}  \right)' \bigg|_{x=b_0} =  \frac{\bm h(b_0)}{\bm Q(b_0)} = \lim_{t\to T-}\dot{b }  ,
$$
proving the first assertion of the theorem. 

Analogously, subtracting equations in \eqref{localA} yields that for $t\in (T, T+\varepsilon)$,
\begin{equation*} 
\frac{d}{dt}\, (\delta^2)    =  \frac{ h (a_+ )}{ \bm Q(a_+)} + \frac{ h (a_- )}{ \bm Q(a_-)}  = \frac{(a_+-\zeta) \bm h (a_+ )}{ \bm Q(a_+)} + \frac{ ( a_--\zeta) \bm   h (a_- )}{ \bm Q(a_-)} ,
\end{equation*}
or equivalently,
\begin{equation} \label{localDelta2}	
\begin{split}
\frac{d}{dt}\, (\delta^2)
  = ( a_- - \zeta) \left( \frac{ \bm h (a_+ )}{  \bm Q(a_+)} +\frac{ \bm h (a_- )}{  \bm Q(a_-)}  \right) +2\delta\, \frac{\bm h(a_+)}{ \bm Q(a_+)}  .\end{split}
\end{equation}
Let us denote by $a$ the largest zero of $\bm A$ satisfying $a<b_0$. Then condition \eqref{cond1_hk} reads as
\begin{equation} \label{asymptDelta0}	
\int_a^{a_-} \frac{x-\zeta}{\sqrt{A(x;t)}} dx =0 \quad \Rightarrow \quad (\zeta- a_-)^{-1 }= \int_a^{a_-} \frac{dx}{\sqrt{A(x;t)}}  \left( \int_a^{a_-} \frac{x-a_-}{\sqrt{A(x;t)}} dx \right)^{-1}.
\end{equation}
Clearly,
$$
\lim_{t\to T+} \int_a^{a_-} \frac{x-a_-}{\sqrt{A(x;t)}} dx = \int_a^{b_0} \frac{dx }{\sqrt{\bm A(x;T)}} dx \neq 0.
$$
On the other hand,
\begin{align*}
\int_a^{a_-} \frac{dx}{\sqrt{A(x;t)}}  = &\frac{1}{\sqrt{\bm A(a_-;t)}} \int_a^{a_-} \frac{dx}{\sqrt{(a_- -x)(a_+-x))}} \\
& + \int_a^{a_-} \frac{1}{\sqrt{(a_- -x)(a_+-x))}} \left( \frac{1}{\sqrt{\bm A(x;t)}} -\frac{1}{\sqrt{\bm A(a_-;t)}} \right)dx \\
& = \frac{1}{\sqrt{\bm A(a_-;t)}} \log\frac{1}{\delta} + \mathcal O(1), \quad \delta\to 0+.
\end{align*}
From \eqref{localDelta2} and \eqref{asymptDelta0} it follows now that  if we denote
$$
C =  4 \sqrt{\bm A(b_0;T)} \,  \frac{ \bm h (b_0 )}{  \bm Q(b_0)}  \int_a^{b_0} \frac{dx }{\sqrt{\bm A(x;T)}} dx > 0,
$$
then 
\begin{equation} \label{limitDelta}	
\frac{d}{dt}\, (\delta^2) =  \frac{-C +o(1)}{ \log (\delta^2) + \mathcal O(1)} + 2\delta\, \frac{\bm h(b_0)}{ \bm Q(b_0)}  , \quad t\to T+.
\end{equation}
Notice that the solution of the ODE $f'(x)=a/(\log(f(x)+b)$, with $f(0+)=0$, satisfies
$$
f(x) =\frac{a}{ \log(f(x)) +b-1 }=\frac{ax}{\log(x)}\left( 1+\mathcal O\left(\frac{\log \log x}{\log x}\right)\right), \quad x\to 0+,
$$
so that \eqref{limitDelta} implies  \eqref{asymptoticsDeltaNewCut}.
\end{proof}

We turn next to the asymptotic behavior of the Robin constant $\rho(S_t)$ of the support $S_t$, which according to Theorem \ref{lem:Buyarov/Rakhmanov:99} is the second derivative of the infinite volume free energy. Let us study first the following model situation, from which the general conclusion is readily derived.

Assume that $E\subset \R$ is a union of a finite number of disjoint real intervals, and $\Delta_\delta =[\xi-\delta, \xi + \delta]$ is disjoint with $E$ for $\delta <\varepsilon$. We denote $E_\delta=E\cup \Delta_\delta$, and slightly abuse notation, writing $\rho(\delta)$ instead of $\rho(E_\delta)$. 
Observe that with this notation, $\rho(0)=\rho(E)$. Our goal is to study the behavior of $\rho(\delta)$ as $\delta\to 0+$. 

As usual, $g_E(z,a)$ stands for the Green's function of $\overline{\C}\setminus E$ with a pole at $a\notin E$. The well-known relation holds:
\begin{equation} \label{Green}	
g_E(z,\xi) = \log\frac{1}{|z-\xi|} +\gamma(\xi) + o(1), \quad z\to \xi,
\end{equation}
which defines the constant $\gamma(\xi)$.
\begin{lemma}\label{lem:localRobin}
There exists a constant $C>0$ (uniform in $E$ and $\xi$) such that for $\delta <\varepsilon$, 
$$
\left| \rho(\delta) -  \rho(0) + \frac{g^2_E(\xi, \infty) }{\gamma(\xi) + \log(2/\delta)} \right| \leq C \delta.
$$
\end{lemma}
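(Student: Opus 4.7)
The plan is to build an explicit trial probability measure $\mu_\delta$ on $E_\delta$ whose logarithmic potential is constant on $E_\delta$ up to an $O(\delta)$ error, and then identify $\rho(\delta)$ with that constant via a Fubini comparison with the true Robin measure $\omega_{E_\delta}$.

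The key auxiliary object is the harmonic measure $\omega_\xi^E$ at $\xi$ for the exterior domain $\overline{\C}\setminus E$ (equivalently, the balayage of $\delta_\xi$ onto $E$). A short maximum-principle argument (the difference of the two sides is harmonic in $\C\setminus E$, vanishes at infinity and on $E$, and the $\log|z-\xi|$ singularities cancel at $\xi$) yields
\begin{equation*}
V^{\omega_\xi^E}(z) \;=\; -\log|z-\xi| - g_E(z,\xi) + g_E(\xi,\infty), \qquad z\in\C,
\end{equation*}
so that $V^{\omega_\xi^E}(z)=-\log|z-\xi|+g_E(\xi,\infty)$ on $E$ and $V^{\omega_\xi^E}(\xi)=g_E(\xi,\infty)-\gamma(\xi)$. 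With this identity at hand I set
\begin{equation*}
\alpha\;:=\;\frac{g_E(\xi,\infty)}{\gamma(\xi)+\log(2/\delta)}, \qquad \mu_\delta\;:=\;\omega_E+\alpha\bigl(\omega_{\Delta_\delta}-\omega_\xi^E\bigr),
\end{equation*}
a signed combination of total mass one that is absolutely continuous and positive for small $\delta$, since $\omega_\xi^E$ has a bounded Radon--Nikodym derivative with respect to $\omega_E$ and $\alpha=O(1/\log(1/\delta))\to 0$.

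Next I would compute $V^{\mu_\delta}$ on the two parts of $E_\delta$. On $E$, $V^{\omega_E}\equiv\rho(E)$ exactly, $V^{\omega_\xi^E}(z)=-\log|z-\xi|+g_E(\xi,\infty)$ exactly, and the explicit Robin potential of the interval $\Delta_\delta$ gives $V^{\omega_{\Delta_\delta}}(z)=-\log|z-\xi|+O(\delta^2)$; hence
\begin{equation*}
V^{\mu_\delta}(z)\big|_{E}\;=\;\rho(E)-\alpha\,g_E(\xi,\infty)+O(\delta^2).
\end{equation*}
On $\Delta_\delta$, $V^{\omega_{\Delta_\delta}}\equiv\log(2/\delta)$, while smoothness off $E$ gives the first-order expansions $V^{\omega_E}(z)=\rho(E)-g_E(\xi,\infty)+O(\delta)$ and $V^{\omega_\xi^E}(z)=g_E(\xi,\infty)-\gamma(\xi)+O(\delta)$, so
\begin{equation*}
V^{\mu_\delta}(z)\big|_{\Delta_\delta}\;=\;\rho(E)-g_E(\xi,\infty)+\alpha\bigl[\gamma(\xi)+\log(2/\delta)\bigr]+O(\delta)\;=\;\rho(E)-\alpha\,g_E(\xi,\infty)+O(\delta),
\end{equation*}
the specific value of $\alpha$ being precisely the one that equalises the two expressions. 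Therefore $V^{\mu_\delta}\equiv c+O(\delta)$ on $E_\delta$, where $c:=\rho(E)-g_E^2(\xi,\infty)/(\gamma(\xi)+\log(2/\delta))$.

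To finish, I apply the symmetry of the logarithmic bilinear form to $\mu_\delta$ and $\omega_{E_\delta}$: since $V^{\omega_{E_\delta}}\equiv\rho(\delta)$ quasi-everywhere on $E_\delta$ and $\mu_\delta$ is absolutely continuous (so does not charge polar sets),
\begin{equation*}
\rho(\delta)\;=\;\int V^{\omega_{E_\delta}}\,d\mu_\delta\;=\;\int V^{\mu_\delta}\,d\omega_{E_\delta}\;=\;c+O(\delta),
\end{equation*}
which is the claim of the lemma. The main obstacle is the bookkeeping required to certify that all the $O(\delta)$ constants depend only on $\mathrm{dist}(\xi,E)$ and $\mathrm{diam}(E\cup\{\xi\})$, so that the resulting $C$ is uniform in $E$ and $\xi$ as asserted; this reduces to uniform bounds on $V^{\omega_E}$, $V^{\omega_\xi^E}$ and $V^{\omega_{\Delta_\delta}}$ and their first derivatives on a fixed annulus around $\xi$ bounded away from $E$, which follow from the explicit representations above.
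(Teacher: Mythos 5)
Your proof is correct and follows essentially the same route as the paper: the same trial signed measure (balayage of $\delta_\xi$ replacing a point mass, plus the Robin measure of the small interval), the same choice of the balancing weight $\alpha$, and the same potential estimates on $E$ and $\Delta_\delta$. The only minor difference is in the last step, where the paper invokes the two-sided bound $\min_{E_\delta}V^\sigma\le\rho(\delta)\le\max_{E_\delta}V^\sigma$, whereas you use the symmetry of the mutual energy to write $\rho(\delta)=\int V^{\mu_\delta}\,d\omega_{E_\delta}$; both are standard and equivalent here.
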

\begin{proof}
Let $\mu$ be the balayage of the unit point mass at $\xi$ onto $E$, so that (see \cite{Landkof:72})
\begin{equation}\label{balayage}
V^\mu (z) = \log \frac{1}{|z-\xi|} + g_E(\xi, \infty).
\end{equation}
Comparing it with \eqref{Green} we conclude that
\begin{equation} \label{identityForGreen}	 
g_E(z,\xi) =g_E(\xi, \infty) -V^\mu(z) -\log |z-\xi|.
\end{equation}
Indeed, the function in the right hand side is harmonic in $\overline{\C}\setminus (E\cup \{\xi\})$, vanishing on $E$, and has the appropriate logarithmic behavior when $z\to \xi$.

Recall that we denote by $\omega_E$ and $\omega_{\Delta_\delta}$ be the Robin measures of $E$ and $\Delta_\delta$, respectively. For a  parameter $m>0$ define
\begin{equation}
\label{defSigma}
\sigma = \sigma_\delta = \omega_E + m\, \omega_{\Delta_\delta} - m \mu;
\end{equation}
observe that since $\mu'$ is bounded on $E$, and $\omega_E'$ is bounded away from zero on $E$, $\sigma$ is a positive measure for  sufficiently small values of $m>0$. Consider the potential $V^\sigma$ for such values of $m$; we want to find its bounds on $E_\delta$.

Let us start with $x\in E$. We have
\begin{equation}\label{firstEstimateRobin}
\begin{split}
V^\sigma(x)-\rho(0) = & \left( V^{\omega_E}(x)-\rho(0)\right) - m \left( V^{\mu}(x)- \log \frac{1}{|x-\xi|} \right) \\
& + m \left( V^{\omega_{\Delta_\delta}}(x)- \log \frac{1}{|x-\xi|} \right) .
\end{split}
\end{equation}
The first parenthesis in the right hand side is $=0$ by definition of $\omega_E$; the second one is equal to $g_E(\xi, \infty)$, where we use \eqref{balayage}. So, we need to estimate the third parenthesis.

We have
$$
V^{\omega_{\Delta_\delta}}(x)=\log \left| \frac{2}{x-\xi + \sqrt{(x-\xi)^2-\delta^2}}\right|, 
$$
so that
$$
V^{\omega_{\Delta_\delta}}(x)- \log \frac{1}{|x-\xi|} =\log  \frac{2}{1 + \sqrt{1-\left(\frac{\delta}{x-\xi}\right)^2}}, \quad |x-\xi|>\delta.
$$
Using the straightforward bounds 
$$
\log \frac{2}{1+\sqrt{1-s^2}} =\log \left( 1+ \frac{s^2}{(1+\sqrt{1-s^2})^2}\right)<s^2
$$
we conclude that
\begin{equation*}
0< V^{\omega_{\Delta_\delta}}(x)- \log \frac{1}{|x-\xi|} < \left(\frac{\delta}{x-\xi} \right)^2 , \quad |x-\xi|>\delta.
\end{equation*}
Gathering all estimates in \eqref{firstEstimateRobin} we see that 
\begin{equation}
\label{estimate2}
-m g_E(\xi, \infty) < V^\sigma(x)-\rho(0) < -m g_E(\xi, \infty) + m \left(\frac{\delta}{\dist (\xi, E)} \right)^2 ,  \quad x\in E.
\end{equation}
Now we look for similar bounds on $\Delta_\delta$. Observe that
\begin{equation*}
V^\sigma(\xi)-\rho(0) =   \left( V^{\omega_E}(\xi)-\rho(0)\right) - m   V^{\mu}(\xi)   + m   V^{\omega_{\Delta_\delta}}(\xi) .
\end{equation*}
By the well-known property of $\omega_E$,  the first parenthesis is  equal to $-g_E(\xi, \infty)$;  by identity \eqref{identityForGreen}, 
$$
V^{\mu}(\xi) =g_E(\xi, \infty)- \gamma(\xi), 
$$ while by the equilibrium condition \eqref{equilibriumRobin}, $
V^{\omega_{\Delta_\delta}}(\xi)=\log (2/\delta)$. Thus,
$$
V^\sigma(\xi)-\rho(0) = -g_E(\xi, \infty) - m \left( g_E(\xi, \infty)- \gamma(\xi)\right) + m \log \frac{2}{\delta}.
$$
Now we set the value of the parameter $m$ to guarantee that
\begin{equation}
\label{valueAtxi}
V^\sigma(\xi)-\rho(0)  = - m g_E(\xi, \infty) ;
\end{equation}
in other words, we take
\begin{equation*}
m =\frac{g_E(\xi, \infty) }{\gamma(\xi) + \log \frac{2}{\delta}}>0.
\end{equation*}
Observe that $m\to 0+$ for $\delta \to 0+$, so that we can choose $\varepsilon>0$ such that $\sigma_\delta$ defined in \eqref{defSigma} is a probability measure for $0<\delta<\varepsilon$.

We are ready to establish the sought bounds on $\Delta_\delta$. Since $V^{\omega_{\Delta_\delta}}$ is constant on $\Delta_\delta$, we have
$$
V^\sigma(x)- V^\sigma(\xi)= \left(V^{\omega_E}(x) - V^{\omega_E}(\xi)\right) - m \left(V^{\mu}(x) - V^{\mu}(\xi)\right).
$$
Clearly, both terms are harmonic on ${\Delta_\delta}$, so we conclude that
\begin{equation}
\label{valueAtDelta}
\left|  V^\sigma(x)- V^\sigma(\xi) \right| \leq C_1 \delta, \quad \delta \to 0+,
\end{equation}
where $C_1$ is uniform with respect to small variations of $E$ and $\xi$.

It remains to gather \eqref{estimate2},  \eqref{valueAtxi} and \eqref{valueAtDelta}, and use the well-known property 
\begin{equation*} 
\min_{x\in E_\delta} V^\sigma(x) \leq \rho(\delta) \leq \max_{x\in E_\delta} V^\sigma(x),
\end{equation*}
which concludes the proof of the lemma.
\end{proof}

In order to use this result in the situation of a birth of a cut it is enough to notice that $\bm Q$ is a differentiable function of $t$ at $t=T$, and so is $\xi(t)$. Taking into account \eqref{asymptoticsDeltaNewCut} we get
$$
  \rho(S_t) -  \rho(S_T) = -\frac{ g^2_{S_T}(b_0, \infty)}{2 }\, \frac{ 1}{  \log(t-T)}\left(1+\mathcal O\left(\frac{\log \log (t-T)}{\log (t-T)}\right) \right), \quad t\to T_+.
$$
In particular, $\rho(S_t)$ is continuous  but non-differentiable at $t=T$:
\begin{theorem}\label{thm:RhoPhase1}
With the notations above,
\begin{equation} \label{infiniteJumpNewCut}	
\frac{d}{dt}  \rho(S_t) = -\frac{ g^2_{S_T}(b_0, \infty)}{2 }\, \frac{ 1}{ (t-T) \log^2(t-T)}\left(1+\mathcal O\left(\frac{\log \log (t-T)}{\log (t-T)}\right) \right) , \quad t \to T+,
\end{equation}
\end{theorem}
In fact, $\frac{d}{dt}  \rho(S_t) $ has an infinite jump at $t=T$, since 
$$
\lim_{t\to T-} \frac{d}{dt}  \rho(S_t) 
$$
obviously exists and is finite. 

Expression \eqref{infiniteJumpNewCut} is consistent with the $\mathcal O\left((t-T)^{-1} \log^{-2}(t-T) \right) $ jump found in \cite{MR2629605} for the quartic potential (the simplest transition from one to two cuts, see Section \ref{sec:Quartic}).

\subsection{Singularity of type II: fusion of two cuts}\label{subs:merge}

We turn our attention  to the critical transition when two intervals from $S_t$ collide and merge into a single interval. More precisely, we assume that at a time $t=T$, a real and double zero $b$ of $B$ belongs to the interior of the support $S_T$; by \cite[Theorem 1.3(iv)]{MR1744002}, this implies that at  $t=T$, two simple zeros $a_{2s}$ and $a_{2s+1}$ of $A$ (simple zeros of $R_t$) coalesce. 

Again, we start by estimating the size of the gap between two merging cuts as a function of $t$.

Using the notation introduced above, from our assumptions it  follows that 
 for a small $\varepsilon >0$, there exist polynomials $\bm A$, $\bm B$ and $\bm h$, such that
\begin{align*}
A(x;t)&=\begin{cases}
  (x-a_{-})(x-a_{+})\bm A(x;t), & t\in (T-\varepsilon,T),\\
  \bm A(x;t), & t\in [T,T+\varepsilon, T),\\
\end{cases} \\
B(x;t)&=\begin{cases}
  (x-b)  \bm B(x;t), & t\in (T-\varepsilon, T),\\ 
(x-b)  (x-\overline{b}) \bm B(x;t), & t\in [T, T+\varepsilon, T),\\
\end{cases} \\ 
h(x;t)&=\begin{cases}
 (x-\zeta) \bm h(x;t), & t\in (T-\varepsilon, T],\\
 \bm h(x;t), & t\in (T, T+\varepsilon),\\
\end{cases}
\end{align*}
where $a_{\pm} $,  $b$ and $\zeta$ are continuous functions of $t$ satisfying  
$$
a_{-}(t=T-)=a_{+}(t=T-)=b(t=T\pm)=\zeta(t=T-);
$$ 
again, we denote this common value by $b_0$. Notice that $a_{\pm} $, $b$ and $\zeta$  are real-valued for $t\in (T-\varepsilon, T]$, while  $b\in \C\setminus \R$ for $t\in (T, T+\varepsilon)$.  Polynomials $\bm A$, $\bm B$ and $\bm h$ are continuous with respect to the parameter $t\in (T-\varepsilon, T+\varepsilon)$, but represent, generally speaking,  different real-analytic functions of $t$ for $t<T$ and $t>T$. 

\begin{theorem}\label{thm:Deltaphase2}
The function, defined piecewise as
$$
\begin{cases}
\xi(t), & t<T, \\
2 \Re b(t), & t\geq T,
\end{cases}
$$
is  $C^1$ in $(T-\varepsilon, T+\varepsilon)$. Furthermore,  
\begin{equation} \label{closingDelta}	
\delta^2 (t)=   - \frac{ 2 \bm h(b_0 )}{ \bm A (b_0)\bm B(b_0)}   (T-t) \left( 1 + o(1)\right), \quad t\to T-.
\end{equation}
\end{theorem}
\begin{proof}
We denote $\bm Q(x)=\bm A (x)\bm B(x)$, omitting from the notation when possible the explicit dependence on $t$.
From \eqref{odeB} it follows that
\begin{align}
\label{localAmerging}	
\dot{a_\pm} &=  \mp \frac{2 (\zeta-a_\pm )}{(a_+-a_-) (a_\pm-b)} \frac{\bm h(a_\pm)}{\bm Q(a_\pm)}, \quad \dot b =    \frac{\zeta-b}{(b-a_-) (b-a_+)} \frac{\bm h(b)}{\bm Q(b)} ,   \quad t\in ( T-\varepsilon, T),
\\
\label{localBmerging}
\dot b &= \frac{1}{b-\overline{b}}\, \frac{\bm h(b)}{\bm Q(b)} , \quad \dot{ \overline{b}} = \frac{-1}{b-\overline{b}}\, \frac{\bm h(\overline{b} )}{\bm Q(\overline{b} )} , \quad t\in (T,T+\varepsilon).
\end{align}
Adding the first two equations in \eqref{localAmerging} and using the notation introduced in \eqref{def:zianddelta} we get
\begin{equation*}
\begin{split}
\dot \xi = &   \frac{-1}{a_+-a_-} \, \left( \frac{(\zeta- a_+)  \bm h (a_+ )}{(a_+-b) \bm Q(a_+)} - \frac{ (\zeta-a_- )  \bm h (a_- )}{(a_--b) \bm Q(a_-)} \right) , \quad t\in (T-\varepsilon, T), 
\end{split}
\end{equation*}
so that
$$
\lim_{t\to T-}\dot \xi =  -\left( \frac{(\zeta- x)  \bm h (x )}{(x-b) \bm Q(x)}  \right)' \bigg|_{x=b_0} =  \left( \frac{  \bm h (x )}{  \bm Q(x)}  \right)' \bigg|_{x=b_0} .
$$
Analogously, from \eqref{localBmerging},
$$
\lim_{t\to T+}\frac{d}{dt}\, (2 \Re b) =  \left( \frac{  \bm h(x )}{  \bm Q(x)}  \right)' \bigg|_{x=b_0} ,
$$
which proves the first assertion of the theorem. 

Furthermore, by \eqref{localAmerging},
\begin{equation} \label{asymptDelta}	
\frac{d}{dt}\,  (\delta^2)   =    \frac{(a_+-\zeta) \bm h (a_+ )}{(a_+-b) \bm Q(a_+)} + \frac{ (\zeta - a_-) \bm   h (a_- )}{(b-a_-) \bm Q(a_-)} .
\end{equation}
Condition \eqref{cond1_hk} reads as
$$
\int_{a_-}^{a_+} \frac{x-\zeta}{\sqrt{A(x;t)}} dx =0.
$$
With the change of variables $x=(a_+-a_-) y+a_-$ in the integrand we obtain that
$$
\frac{\zeta-a_-}{a_+-a_-}= \int_0^1 \frac{y \bm g_1(y;t) }{\sqrt{y(1-y)}} \,  dy \left( \int_0^1 \frac{  \bm g_1(y;t) }{\sqrt{y(1-y)}} \, dy \right)^{-1},
$$
where
$$
\bm g_1(y;t) = - \frac{\bm h ((a_+-a_-) y+a_-)}{\sqrt{\bm A((a_+-a_-) y+a_-)}} ,
$$
so that 
$$
\lim_{t\to T-} \bm g_1(y;t)  =- \frac{\bm h(b_0)}{\sqrt{\bm A(b_0)}} 
$$
uniformly for $y\in [0,1]$. Thus,
\begin{equation} \label{limitZeta1}	
\lim_{t\to T-} \frac{\zeta-a_-}{a_+-a_-} = \lim_{t\to T-} \frac{a_+-\zeta}{a_+-a_-} =\frac{1}{2}.
\end{equation}

Analogously, by \eqref{periods},
$$
\int_{a_-}^{a_+}  (x-b) \sqrt{(a_+-x)(x-a_-)}\sqrt{\bm A(x;t)} \bm B(x;t) dx =0.
$$
With the same change of variables as above, we get
$$
\frac{b-a_-}{a_+-a_-} = \int_0^1  y \sqrt{y(1-y)} \bm g_2(y;t)  \,  dy \left( \int_0^1 \sqrt{y(1-y)} \bm g_2(y;t)  \,  dy \right)^{-1},
$$
where now
$$
\bm g_2(y;t) =  \bm B((a_+-a_-) y+a_-) \sqrt{\bm A((a_+-a_-) y+a_-)}  ,
$$
so that 
$$
\lim_{t\to T-} \bm g_2(y;t)  = \bm B(b_0) \sqrt{\bm A(b_0)} 
$$
uniformly for $y\in [0,1]$. Consequently,  
\begin{equation} \label{limitZeta2}	
\lim_{t\to T-} \frac{b-a_-}{a_+-a_-} =\lim_{t\to T-} \frac{a_+-b}{a_+-a_-} = \frac{1}{2}.
\end{equation}
Combining \eqref{limitZeta1} and \eqref{limitZeta2} we see that
$$
\lim_{t\to T-} \frac{\zeta-a_-}{b-a_-} =\lim_{t\to T-} \frac{a_+-\zeta}{a_+-b}= 1.
$$
Using it in \eqref{asymptDelta} we conclude that 
$$
\lim_{t\to T-} \frac{d}{dt}\,  (\delta^2)   =  2 \frac{ \bm h (b_0 )}{ \bm Q(b_0)}<0,
$$
which establishes \eqref{closingDelta}. Furthermore, this formula shows that two cuts can come together only at zeros $b_0$ of $B$ for which $( \bm A \bm B \bm h)(b_0)<0$.   
\end{proof}

Now we turn to the simplified model problem.

Assume that  $p\geq 2$,
$$
E_\delta=\bigcup_{k=1}^p \Delta_k,
$$
where $\Delta_k= [a_{2k-1},a_{2k}]$ are pairwise-disjoint intervals, and assume that for certain $s\in \{1, \dots, p-1\}$,
$$
a_{2s} =\xi -\delta, \quad a_{2s+1}=\xi + \delta,
$$
where $\xi$ is fixed and $\delta\to 0+$. No other endpoint $a_k$ depends on $\delta$. By \eqref{greenF}, for $G(z)=G_{E_\delta}(z;\infty)$, we have 
$$
G'(z)=  \frac{h (z;\delta)}{A^{1/2} (z;\delta)}, \quad z\in \C\setminus E_\delta,
$$
with
$$
A(z;\delta)= \prod_{k=1}^{2p}\,(z-a_k) =  (z-\xi - \delta)(z-\xi+\delta) \bm A(z),
$$
and 
$$
 h(z;\delta)= \prod_{k=1}^{p-1} (z-\zeta_k)= (z-\zeta_s) \bm h (z;\delta), 
$$
where all $ \zeta_k\in [a_{2k}, a_{2k+1}]$, for $k=1, \dots, p-1$; notice that  $\deg ( \bm h )= p-2$. Zeros $\zeta_k$ do depend on $\delta$, so we will write $\zeta_k(\delta)$ when we want to make it explicit. Observe also that $\zeta_s(0)=\xi$ and $\zeta_k(0)$, $k\in \{1, \dots, p-2\}\setminus \{s\}$, are the zeros of the derivative of the complex Green's function for $E_0$.
\begin{lemma}\label{lemma:merger}
With the notation above,
\begin{equation} \label{deltasquare}	
\zeta_k(\delta)-\zeta_k(0) = \mathcal O\left(\delta^2 \right), \quad \delta \to 0+, \qquad k= 1, \dots, p-1.
\end{equation}
Moreover,
\begin{equation} \label{deltasquare2}	
\zeta_s(\delta)-\xi =  K_\xi \, \delta^2  + \mathcal O\left(\delta^3 \right), \quad \delta \to 0+,
\end{equation}
with
\begin{equation} \label{defOfK}	
K _\xi =\frac{1}{2}\, \left( \sum_{j\neq s} \frac{1}{\xi-\zeta_j(0)} - \frac{1}{2} \sum_{k\neq 2s, 2s+1} \frac{1}{\xi-a_k}  \right).
\end{equation}
\end{lemma}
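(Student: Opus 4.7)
The plan is to treat the $p-1$ normalization conditions
\begin{equation*}
I_k(\zeta_1,\dots,\zeta_{p-1};\delta) := \int_{a_{2k}}^{a_{2k+1}} \frac{h(x;\delta)}{A^{1/2}(x;\delta)} \, dx = 0, \quad k=1,\dots,p-1,
\end{equation*}
as an implicit system for $\zeta_1(\delta),\dots,\zeta_{p-1}(\delta)$ and to apply the implicit function theorem at $\delta=0$. The only subtle point is that for $k=s$ the interval $G_s=(\xi-\delta,\xi+\delta)$ collapses to a point; I cure this by the substitution $x=\xi+\delta y$, which rewrites the $k=s$ condition as
\begin{equation*}
\tilde I_s(\zeta_1,\dots,\zeta_{p-1};\delta) = \int_{-1}^{1} \frac{h(\xi+\delta y;\delta)}{\sqrt{(1-y^{2})\,|\bm A(\xi+\delta y)|}} \, dy = 0,
\end{equation*}
an expression smooth in $(\zeta_j,\delta)$ near $\delta=0$.

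Next I establish the even-in-$\delta$ symmetry of the system. With $\zeta_1,\dots,\zeta_{p-1}$ frozen, $A(x;\delta)=((x-\xi)^{2}-\delta^{2})\,\bm A(x)$ depends on $\delta$ only through $\delta^{2}$, so each $I_k$ with $k\neq s$ is even in $\delta$; for $\tilde I_s$ the change of variable $y\mapsto -y$ yields the same conclusion. Setting $u=\delta^{2}$ makes the whole system real-analytic in $(\zeta_1,\dots,\zeta_{p-1},u)$ in a neighborhood of $u=0$. At $u=0$, $\tilde I_s=0$ reduces to $\pi\,h(\xi;0)/|\bm A(\xi)|^{1/2}=0$, forcing $\zeta_s(0)=\xi$; using $|A(x;0)|^{1/2}=|x-\xi|\,|\bm A(x)|^{1/2}$ and the constancy of $\sgn(x-\xi)$ on each $G_k$ with $k\neq s$, the remaining $p-2$ equations reduce to $\int_{G_k}\bm h(x;0)/|\bm A(x)|^{1/2}\,dx=0$, which uniquely determine $\zeta_k(0)$, $k\neq s$, as the zeros of $h_0$. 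The Jacobian of the system at $u=0$ is block-triangular: $\partial\tilde I_s/\partial\zeta_s=-\pi\,\bm h(\xi;0)/|\bm A(\xi)|^{1/2}\neq 0$, the top-right block $\partial\tilde I_s/\partial\zeta_j$ (for $j\neq s$) vanishes because each term in $\partial h(\xi;0)/\partial\zeta_j$ contains the factor $(\xi-\zeta_s(0))=0$, and the lower-right block coincides with the non-singular Jacobian of the period system for the Green function of $E_0$. The implicit function theorem therefore yields analytic functions $\zeta_k=\zeta_k(u)$, which proves \eqref{deltasquare}.

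To identify the coefficient in \eqref{deltasquare2} I expand $\tilde I_s$ to second order in $\delta$. Write $\eta=\zeta_s(\delta)-\xi=K\delta^{2}+\mathcal O(\delta^{4})$ and $F(z;\delta)=\bm h(z;\delta)/|\bm A(z)|^{1/2}=F_0(z)+\mathcal O(\delta^{2})$ with $F_0(z)=\bm h(z;0)/|\bm A(z)|^{1/2}$. Factoring $h(\xi+\delta y;\delta)=(\delta y-\eta)\,\bm h(\xi+\delta y;\delta)$ and Taylor-expanding $F$ in its first argument produces
\begin{equation*}
(\delta y-\eta)\,F(\xi+\delta y;\delta)=\delta y\,F_0(\xi)+\delta^{2}y^{2}F_0'(\xi)-K\delta^{2}F_0(\xi)+\mathcal O(\delta^{3}).
\end{equation*}
Odd-in-$y$ terms integrate to zero against $1/\sqrt{1-y^{2}}$ on $[-1,1]$; using $\int_{-1}^{1}dy/\sqrt{1-y^{2}}=\pi$ and $\int_{-1}^{1}y^{2}\,dy/\sqrt{1-y^{2}}=\pi/2$ I get
\begin{equation*}
\tilde I_s=\frac{\pi\delta^{2}}{2}F_0'(\xi)-\pi\,\eta\,F_0(\xi)+\mathcal O(\delta^{4})=0,
\end{equation*}
whence $K=\tfrac12\,F_0'(\xi)/F_0(\xi)$. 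Evaluating the logarithmic derivative of $F_0$ at $\xi$ gives
\begin{equation*}
\frac{F_0'(\xi)}{F_0(\xi)}=\sum_{j\neq s}\frac{1}{\xi-\zeta_j(0)}-\frac12\sum_{k\neq 2s,2s+1}\frac{1}{\xi-a_k},
\end{equation*}
which recovers \eqref{defOfK}.

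The main technical nuisance I expect to encounter is the book-keeping of the sign of $A^{1/2}$ on the various gaps $G_k$ and, relatedly, of the constant factor $\sgn(x-\xi)$ that surfaces when one sets $\delta=0$; once this is handled, the identification of the $\delta=0$ system with the period system for $E_0$, the invertibility of the Jacobian, and the expansion leading to $K_\xi$ are routine.
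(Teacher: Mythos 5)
Your proof is correct and follows the same basic strategy as the paper: treat the normalization conditions as an implicit system for the $\zeta_k$'s, check invertibility of the Jacobian at $\delta=0$ via the block structure of the $s$-th row and the nonsingularity of the $(p-2)\times(p-2)$ period system for $E_0$, then Taylor-expand the $s$-th condition to extract $K_\xi$. Where you improve on the paper's exposition is the parity argument: the paper establishes \eqref{deltasquare} by estimating $\partial G_j/\partial\delta=\mathcal{O}(\delta)$ for $j\neq s$ and showing $\partial G_s/\partial\delta\to 0$, which is a bit loose (the stated limits only give $d\zeta_k/d\delta=o(1)$, not $\mathcal{O}(\delta)$), whereas your observation that, after the substitution $x=\xi+\delta y$ in the collapsing integral, the entire system is an even analytic function of $\delta$ — hence analytic in $u=\delta^2$ — gives $\zeta_k(\delta)-\zeta_k(0)=\mathcal{O}(\delta^2)$ (indeed even $K\delta^2+\mathcal{O}(\delta^4)$ for $\zeta_s$) cleanly and in one stroke. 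The computation of $K_\xi=\tfrac12 F_0'(\xi)/F_0(\xi)$ and the resulting logarithmic derivative agree with the paper. The only thing to keep in mind, which you flag yourself, is the constant sign of $\sgn(x-\xi)$ and of $\bm A$ on each gap, which is why the $\delta=0$ subsystem for $k\neq s$ really does coincide with the $E_0$ period system.
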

\begin{proof}
Consider first the vector-valued function $\mathcal F:[0,\varepsilon)\to \R^{p-1}$, assigning each value of  $\delta$ to the corresponding vector $ (\zeta_1(\delta), \dots, \zeta_{p-1}(\delta))$, defined by equations  \eqref{cond1_hk}. It is clearly differentiable for any $\delta\in (0,\varepsilon)$. 
If we denote
$$
G_k(\delta; \zeta_1, \dots, \zeta_{p-1}) =  \int_{a_{2k}}^{a_{2k+1}} \frac{h(x;\delta)}{A^{1/2} (x;\delta) } \, dx   =0, \quad k=1, \dots, p-1,  
$$
then by the implicit function theorem,
\begin{equation} \label{linearsystem}
\begin{pmatrix}
\frac{\partial}{\partial\delta}  G_1  \\
\vdots \\
\frac{\partial }{\partial\delta}G_{p-1} 
\end{pmatrix} = -
\begin{pmatrix}
\frac{\partial G_1}{\partial \zeta_1 } & \dots & \frac{\partial G_1}{\partial \zeta_{p-1} } \\
\vdots & &\vdots\\
\frac{\partial G_{p-1}}{\partial \zeta_1 } & \dots & \frac{\partial G_{p-1}}{\partial \zeta_{p-1} } 
\end{pmatrix}
\begin{pmatrix}
\frac{d}{d\delta}  \zeta_1   \\
\vdots \\
\frac{d }{d\delta}\zeta_{p-1}  
\end{pmatrix} .
\end{equation}
Consider the $s$-th row of the matrix in the right-hand side of \eqref{linearsystem} for $\delta\to 0+$. We have
$$
G_s(\delta; \zeta_1, \dots, \zeta_{p-1}) =\int_{\xi-\delta}^{\xi+\delta}  \frac{(x-\zeta_s) \bm h(x;\delta)}{\sqrt{(x-\xi - \delta)(x-\xi+\delta) \bm A(x)}} \, dx   .
$$
Since only the numerator of the integrand depends on $\zeta_k$, it is easy to see that
$$
\lim_{\delta \to 0+} \frac{\partial G_s}{\partial \zeta_s } =- \pi\frac{\bm h(\xi;0)}{\sqrt{- \bm A(\xi)}} \neq 0 \quad \text{and} \quad \lim_{\delta \to 0+} \frac{\partial G_s}{\partial \zeta_k } =0 \quad \text{for } k\neq s.
$$
On the other hand, the minor of the matrix in the right-hand side of \eqref{linearsystem} obtained after eliminating the $s$-th row and column is clearly invertible for $\delta \to0+$: it corresponds to the system \eqref{linearsystem}  for $E_0$, and the endpoints of $E_0$ are free ends where no phase transition occurs.

Hence, expanding the matrix in \eqref{linearsystem} along its $s$-th row we conclude that it is invertible for small values of $0<\delta<\varepsilon$, and we can write
$$
\begin{pmatrix}
\frac{d}{d\delta}  \zeta_1   \\
\vdots \\
\frac{d }{d\delta}\zeta_{p-1}  
\end{pmatrix} 
 = -
\begin{pmatrix}
\frac{\partial G_1}{\partial \zeta_1 } & \dots & \frac{\partial G_1}{\partial \zeta_{p-1} } \\
\vdots & &\vdots\\
\frac{\partial G_{p-1}}{\partial \zeta_1 } & \dots & \frac{\partial G_{p-1}}{\partial \zeta_{p-1} } 
\end{pmatrix}^{-1}
\begin{pmatrix}
\frac{\partial}{\partial\delta}  G_1  \\
\vdots \\
\frac{\partial }{\partial\delta}G_{p-1} 
\end{pmatrix}.
$$ 
Observe that for $j\neq s$,
$$  
\frac{\partial G_j}{\partial \delta} = \delta \int_{a_{2j}}^{a_{2j+1}} \frac{(x-\zeta_s)\bm h(x;\delta)}{\sqrt{(x-\xi - \delta)^3(x-\xi+\delta)^3 \bm A(x)}} =\mathcal O\left( \delta \right), \quad \delta \to 0+,
$$ 
while for $j=s$, 
$$
\lim_{\delta \rightarrow 0+} \frac{\partial G_s}{\partial \delta} = \frac{\bm h(\xi;0)}{\sqrt{-\bm A (\xi)}} \int_{-1}^1 \frac{s}{\sqrt{1-s^2}} ds = 0\,.
$$
This proves \eqref{deltasquare}.

Let us be more precise about the asymptotics of $\zeta_s$. By  \eqref{cond1_hk},  
$$ 
\int_{\xi-\delta}^{\xi+\delta} \frac{(x-\zeta_s) \bm h(x;\delta)}{\sqrt{\delta^2-(x-\xi)^2} \sqrt{-\bm A(x)}} dx = 0.
$$
Thus, defining $\eta = \zeta_s - \xi$ and making in the integral above the appropriate change of variables we obtain an expression for $\eta$:
\begin{equation*}
\eta = \frac{1}{\pi} \int_{-\delta}^\delta \frac{\tau f(\tau,\delta)}{\sqrt{\delta^2-\tau^2}}\, d\tau \bigg/ \left( \frac{1}{\pi} \int_{-\delta}^\delta \frac{ f(\tau,\delta)}{\sqrt{\delta^2-\tau^2}}\, d\tau  \right),
\end{equation*}
where
\begin{equation} \label{defFaux}	
f(\tau, \delta)=\frac{\bm h(\tau + \xi;\delta)}{\sqrt{-\bm A(\tau + \xi)}}.
\end{equation}
Since $f(\tau, \delta)=f(0, \delta) + f_\tau'(0,\delta) \tau + f_{\tau \tau}''(0,\delta) \tau ^2/2 +
\varepsilon \tau ^3 $, and defining  $M_2=\max_{|\tau|\leq\delta} |f_{\tau \tau}''(\tau,\delta)|$, $ M_3=\max_{|\tau|\leq\delta} |f_{\tau \tau \tau}'''(\tau,\delta)|$, we get that
\begin{align*}
 \frac{1}{\pi} \int_{-\delta}^\delta \frac{ f(\tau,\delta)}{\sqrt{\delta^2-\tau^2}}\, d\tau & = f(0,\delta) +\varepsilon_1, \quad |\varepsilon _1|\leq \frac{M_2 \delta^2}{4}, \\
\frac{1}{\pi} \int_{-\delta}^\delta \frac{\tau f(\tau,\delta)}{\sqrt{\delta^2-\tau^2}}\, d\tau & = \frac{f_\tau'(0,\delta) \delta^2}{2} +\varepsilon_2, \quad | \varepsilon _2 |\leq \frac{M_3\delta^4}{16}, 
\end{align*}
so that
\begin{equation} \label{estimateEta}	
\eta= \frac{f_\tau'(0,\delta)}{2 f(0,\delta)}\, \delta^2 + \varepsilon_3, 
\end{equation}
where 
$$
\varepsilon_3 =  \frac{ \varepsilon_2-\varepsilon_1\delta^2  f_\tau'(0,\delta)/(2f(0,\delta))}{f(0,\delta) +\varepsilon_1}=\mathcal O\left( \delta^4\right) , \quad \delta\to 0+.
$$

We claim that in the asymptotic expression above we can replace $f_\tau'(0,\delta)/f(0,\delta)$ by $f_\tau'(0,0)/f(0,0) + \mathcal O\left( \delta^2\right)$. This is a direct consequence of \eqref{deltasquare}  and the fact that
 in $f(\tau, \delta)$, only the numerator depends on $\delta$.

This proves the lemma.
\end{proof}

As in the case of the birth of a cut, we study the asymptotics of the Robin constant $\rho(\delta)=\rho(E_\delta)$ as $\delta\to 0+$.

Observe that for $x\neq \xi$ and $ \zeta_s-\xi =K_\xi \, \delta^2  + \mathcal O\left(\delta^3 \right)$, $\delta\to 0+$,
\begin{equation} \label{expansion1}	
\frac{x-\zeta_s}{\sqrt{(x-\xi+\delta)(x-\xi -\delta)}}  =1+\left(  \frac{1}{2(x-\xi)^2}-\frac{K_\xi}{x-\xi}\right)\delta^2 + 
 \mathcal O(\delta^3),
\end{equation}
with the $\mathcal O(\delta^3)$ term uniform in $x$.

Recall that for all $\delta>0$, $\bm h(z;\delta)$ is a monic polynomial of degree $p-2$, and by \eqref{deltasquare},
\begin{equation} \label{existenceLimitH}	
\bm h(z;\delta) - \bm h(z;0) =  \mathcal O\left(\delta^2 \right), \quad \delta \to 0+
\end{equation}
uniformly in the endpoints of $E_0$. This motivates us to define 
\begin{equation} \label{limitH}	
\bm H(z) =\lim_{\delta\to 0+} \frac{\bm h(z;\delta) - \bm h(z;0)}{\delta^2};
\end{equation}
the existence of this limit will be established next. Meanwhile, it is clear that $\bm H$ is a polynomial of degree at most $p-3$ (for $p= 2$, function $f(\tau, \delta)$ in \eqref{defFaux} is a constant, so that by \eqref{estimateEta}, $\bm H\equiv 0$ in this case).

Assume that $p\geq 3$; equations  \eqref{cond1_hk} for $k\neq s$ yield:
$$
\int_{a_{2k}}^{a_{2k+1}} \left(\frac{x-\zeta_s}{\sqrt{(x-\xi+\delta)(x-\xi -\delta)}} \frac{\bm h(x;\delta)}{\sqrt{ \bm A(x)}} - \frac{\bm h(x; 0)}{\sqrt{ \bm A(x)}} \right)\, dx = 0.
$$
Dividing it through by $\delta^2$, using \eqref{deltasquare2}--\eqref{defOfK} and considering limit when $\delta\to 0+$ we obtain the following equations on $\bm H$:
\begin{equation} \label{equations_on_H}	
\int_{a_{2k}}^{a_{2k+1}}   \frac{\bm H(x )}{\sqrt{ \bm A(x)}}   \, dx =  \int_{a_{2k}}^{a_{2k+1}}  \left( \frac{K_\xi}{x-\xi}- \frac{1}{2(x-\xi)^2}\right) \frac{\bm h(x;0 )}{ \sqrt{\bm A(x)}}   \, dx, \quad k=1, \dots, p-1, \; k\neq s,
\end{equation}
where $K_\xi$ is defined in \eqref{defOfK}. This renders a  system of $p-2$  linear equations with $p-2$ unknowns (the coefficients of $\bm H$), which has a unique solution (as the consideration of the corresponding homogeneous system clearly shows), and in particular, establishes the existence of the limit in \eqref{limitH}.

From \eqref{greenF} we have for $y>a_{2p}$,
\begin{align*}
G(y, E_\delta) &=   \int_{a_{2p}}^{y} \frac{h (x;\delta)}{\sqrt{A(x; \delta)}}\, dx =\log (y) +\rho(\delta) + o(1), \quad y \to +\infty, \\
G(y, E_0) &=   \int_{a_{2p}}^{y} \frac{h (x; 0)}{\sqrt{A(x; 0)}}\, dx =\log (y) +\rho(0) + o(1), \quad y \to +\infty,
\end{align*}
so that
\begin{align*}
\rho(\delta)- \rho(0) & =  \int_{a_{2p}}^{+\infty} \left( \frac{h (x;\delta)}{\sqrt{A(x; \delta)}} - \frac{h (x; 0)}{\sqrt{A(x; 0)}} \right)\, dx \\
& =   \int_{a_{2p}}^{+\infty} \left(\frac{x-\zeta_s}{\sqrt{(x-\xi+\delta)(x-\xi -\delta)}} \frac{\bm h (x;\delta)}{\sqrt{ \bm A(x)}} - \frac{\bm h (x; 0)}{\sqrt{ \bm A(x)}} \right)\, dx,
\end{align*}
and the integral is convergent for every sufficiently small $\delta>0$. Taking into account \eqref{deltasquare}, \eqref{expansion1} and \eqref{existenceLimitH}, we conclude that $\rho(\delta)- \rho(0) =\mathcal O(\delta^2)$.  Hence, dividing the identity above through by $\delta^2$ and using the definition of $\bm H$ we get
\begin{equation}\label{asymptoticConstRobin}
\lim_{\delta\to 0+} \frac{\rho(\delta)- \rho(0) }{\delta^2}   =  \int_{a_{2p}}^{+\infty} \left(   \bm H(x) - \left( \frac{K_\xi}{x-\xi}- \frac{1}{2(x-\xi)^2}\right)\bm h (x; 0) \right)
\frac{ dx }{\sqrt{\bm A(x)}} .
\end{equation}
 
 We summarize this in the following theorem:
 \begin{theorem}\label{lem:RobinAsympt}
 Under the assumptions above, 
 $$
 \rho(\delta)- \rho(0) = K_\rho\,  \delta^2 \left( 1+\mathcal O(1)\right), \quad \delta\to 0+,
 $$
 where the constant $K_\rho$ is given by the right hand side of \eqref{asymptoticConstRobin}, and the polynomial $\bm H$ is uniquely defined by the equations \eqref{equations_on_H} for $p\geq 3$, or $\bm H\equiv 0$ for $p=2$.
 \end{theorem}

Now we go back to the phase transition when two cuts merge. Using formula \eqref{closingDelta} we see that 
$$
\rho(S_t)-\rho(S_T)= 2  K_\rho\, \frac{ \bm h (b_0 )}{ \bm A(b_0) \bm B(b_0)}    (t-T) (1+o(1)), \quad  t\to T-.
$$
Observe that this expression involves explicitly the point $b_0$ where two cuts merged; clearly, this is not the case if we consider the limit of $d\rho(S_t)/dt$ as $t\to T+$. We could conclude from here that $d\rho(S_t)/dt$ at $t=T$ has finite but, in general, different values from the left and from the right. In particular, $\rho(S_t)$ is not differentiable at $t=T$.

\begin{example} \label{example:merge2}
 
As an illustration, let us consider the case when $S_t$ has only two cuts  that merge into a single interval at $t=T$. Using the notation above, this means that $\bm A(x;t)=(x-a_1)(x-a_2)$, with $a_1< a_-<b_0<a_+ < a_2$ for $t\in (T-\varepsilon, T)$,  $\bm h(x;t)=1$, and $\bm H \equiv 0$. Thus, by \eqref{closingDelta},
$$
\delta^2(t)=  \frac{ 2}{ \bm A (b_0)\bm B(b_0)}   (t-T) \left( 1 + o(1)\right), \quad t\to T-.
$$
Since $S_t=[a_1, a_2]$ for $t\in (T,T+\varepsilon)$, we have $\rho(S_t)=\log(4/(a_2-a_1))$. Using \eqref{odeB},
 $$
\frac{d}{dt}\, \rho(S_t) =- \frac{ \dot{ a_2} - \dot{ a_1} }{a_2-a_1}=-\frac{ 2 }{(a_2-a_1)^2}  \left( \frac{1}{ B(a_2)} +   \frac{1}{B(a_1)} \right),
$$
so that 
\begin{equation} \label{onesidedExample1}
\lim_{t\to T+} \frac{d}{dt}\,  \rho(S_t)= -\frac{ 2 }{(a_2-a_1)^2}  \left( \frac{1}{ (a_2-b_0)^2 \bm B(a_2;T)} +   \frac{1}{(a_1-b_0)^2 \bm B(a_1;T)} \right)<0, 
\end{equation}
where we take the values $ a_j=a_j(t=T)$.

Consider now $t\in (T-\varepsilon, T)$;  with the notations above,  
\begin{align*}
K _\xi & =- \frac{1}{4}\, \left(     \frac{1}{\xi-a_1} +  \frac{1}{\xi-a_4}  \right), \quad   K_\xi\,  \int_{a_{2}}^{+\infty}  \frac{ dx}{(x-\xi) \sqrt{\bm A(x)}}= -\frac{1}{2}\,\frac{\theta \arccos ( \theta)}{ \bm A(\xi)\sqrt{1-\theta^2}},
\end{align*}
where
$$
\theta =  \frac{a_1 +a_2-2\xi}{a_2 - a_1 }\in (-1,1).
$$
Furthermore,
\begin{equation*} 
\frac{1}{2}\, \int_{a_2}^{+\infty} \frac{dx}{(x-\xi)^2 \sqrt{\bm A(x)}} =  \frac{2}{(\theta^2-1) (a_2-a_1 )^2}\,\left(\frac{\theta \arccos (\theta)}{\sqrt{1-\theta^2}} -1\right)\,.
\end{equation*}
Gathering these identities, setting $\xi = b_0$ and taking into account the expression of $\theta$, we get
\begin{equation} \label{onesidedExample2}
\lim_{t\rightarrow T-} \frac{d}{dt}\,  \rho(S_t) = - \frac{1}{\bm A(b_0;T)^2 \bm B(b_0;T)} <0.
\end{equation}
From \eqref{onesidedExample1} and \eqref{onesidedExample2} we can easily compute the finite jump of $\frac{d}{dt}\,  \rho(S_t)$ at $t=T$.

Further simplifications are obtained for the quartic potential, when  $\bm B\equiv 1$. Equations above boil down now to:
$$
\delta^2(t)=  \frac{ 2}{(b_0-a_1)(a_2-b_0)}   (T-t) \left( 1 + o(1)\right), \quad t\to T-
$$
(this expression matches, after due transformations, formula (2.41) in \cite{MR1986409}), and
$$
\lim_{t\to T-} \frac{d}{dt}\, \rho(S_t)= -  \frac{1}{ (a_1-b_0)^2(a_2-b_0)^2  } ,
\quad \lim_{t\to T+} \frac{d}{dt}\, \rho(S_t)=-\frac{ 2 }{(a_2-a_1)^2}  \left( \frac{1}{ (a_2-b_0)^2 } +   \frac{1}{(a_1-b_0)^2 } \right),
$$
so that
\begin{equation}\label{jump}
\lim_{t\rightarrow T+} \frac{d}{dt}\,  \rho(S_t)  - \lim_{t\rightarrow T-} \frac{d}{dt}\,  \rho(S_t) = - \left( \frac{a_1+a_2-2b_0 }{(a_2-a_1)  (a_1-b_0) (a_2-b_0) }\right)^2 \leq0.
\end{equation}

In order to  compare these formulas with those obtained in \cite{MR1986409}, we must set   $a_1  = -2=- a_2$ and $b_0 = 2c_1 = 2\cos \pi \varepsilon$, and denote $s_1 = \sin \pi \varepsilon$. It yields
\begin{equation} \label{jumpBl}
\lim_{t\rightarrow T+} \frac{d}{dt}\,  \rho(S_t) - \lim_{t\rightarrow T-} \frac{d}{dt}\,  \rho(S_t) = - \frac{c_1^2}{16 s_1^4} \leq 0.
\end{equation}
This value differs slightly from the one 
in \cite{MR1986409}, probably due to a minor error in formula (2.55) therein.

Observe finally that the jump in \eqref{jump} or \eqref{jumpBl} is strictly negative, unless $b_0$ (place of collision) coincides with the midpoint of the interval $[a_1, a_2]$. 
 For the quartic external field it takes place if and only if it is symmetric, i.e.~attains its global minimum at two distinct points. This conclusion is a straightforward consequence of the formulas in \eqref{polynomialpart}. Indeed, if $a_1=-a_2$ and $b_0=0$, then the first formula in \eqref{polynomialpart} gives us that
 $$
 \varphi'(x)=x^3-\frac{a_1^2}{2}x, \qquad  \varphi(x) = \frac{1}{4}x^2 (x^2-a_1^2).
 $$
 Since the situation is invariant by translation in $\R$, we can conclude that if $b_0$ is the midpoint of the interval $[a_1, a_2]$ then $\varphi$ has two global minima, situated at 
 $$
 \frac{a_1+a_2}{2} \pm \frac{a_2-a_1}{2\sqrt{2}},
 $$
 and a global maximum at $(a_1+a_2)/2$. The reciprocal is also immediate.

\end{example}

\subsection{Singularity of type III: higher order vanishing of $\lambda_t'$}\label{subs:III}

In this section we want to clarify  the character of the phase transition in the case of a type III singularity. To keep things simple, let us restrict our attention to the  quartic case ($m=2$ in \eqref{charactMeasure1}), but the conclusions readily follow for the general situation. Assume that for $t=T>0$  the type III singularity occurs, without loss of generality,  at the right endpoint of the support. This means that at a time $t=T$, a real and double zero $b$ of $B$ coincides with $a_2$, and by \cite[Theorem 1.3(iv)]{MR1744002}, it implies that
 for $t\in (T-\varepsilon, T+\varepsilon)$, $t\neq T$,
$$
A(x)   = (x-a_1)(x-a_2), \quad B(x)  = (x-b)(x-\overline{b}),  
$$
where $a_1<a_2$. From Remark~\ref{remark8} in Section~\ref{sec:Quartic} it follows also that $b\notin \R$, so that we take $\Im b>0$, in such a way that  both $\Im b$ and $(\Re b-a_2)$ are small with respect to $a_2-a_1$, as $t\to T$. By  \eqref{odeB}, for $0<|t-T|<\varepsilon$,
\begin{align}\label{derivativesA}
\dot{a_1} &= \frac{2}{(a_1-a_2) B(a_1) }, \quad \dot{a_2} = \frac{1}{(a_2-a_1)  B(a_2) }, \\
\label{derivativesB}
\dot{b} &= \frac{1}{(b-\overline b) A(b)}, \quad \dot{\overline b} = \overline{\dot{ b} } = \frac{2}{(\overline b-b) A(\overline b)}.
\end{align}
For the sake of brevity we denote
$$
a = a_1, \quad d= a_2-a_1, \quad \delta = \Re b -a_2, \quad v = (\Im b)^2\geq 0.
$$

\begin{theorem}\label{thm:Deltaphase3}
Let $d_0=d(t=T)$. With the notation above,
\begin{equation} \label{expansionfordeltaIII}	
\delta(t)= - \frac{ 3  }{ 2 d_0^{1/3}    } (t-T)^{1/3} +    \frac{ 21 }{ 32 d_0^{5/3}    } (t-T)^{2/3} + \mathcal O(t-T), \quad t\to T,
\end{equation}
and
\begin{equation} \label{expansionforImBIII}	
\Im b(t)  =  \frac{\sqrt{3}  }{ 2 d_0^{1/3}    } |t-T|^{1/3} +    \frac{ 21 }{ 32 d_0^{5/3}    } (t-T)^{2/3} + \mathcal O(t-T), \quad t\to T.
\end{equation}
\end{theorem}
\begin{proof}
With the notation introduced above,
\begin{equation} \label{Absquare}	
\begin{split}
A(b) & =   \delta (d+ \delta) - v + i(2 \delta  +d) (\Im b) , \quad
|A(b)|^2  = \left(\delta ^2+v\right)  \left((d+\delta)^2 +v\right),  \\
B(a_1)  & = (d+\delta)^2 + v, \quad  B(a_2)    = \delta^2 + v, \quad B(a_1) B(a_2) =|A(b)|^2.
\end{split}
\end{equation}
By \eqref{derivativesB},
\begin{align*}
\dot{(\Re b)}  & = - \frac{\Im A(b)}{2(\Im b)  |A(b)|^2} =  - \frac{d + 2 \delta     }{2   |A(b)|^2}, \quad \dot{(\Im b)}   = - \frac{\Re A(b)}{2 (\Im b)  |A(b)|^2} = \frac{v-\delta(d+\delta)}{2(\Im b)|A(b)|^2}.
\end{align*}
Combining it with \eqref{derivativesA}, we arrive at
\begin{align} \label{dotD}
\dot{d}  & = \dot{a_2}-\dot{a_1}= \frac{2}{d} \frac{  (\delta+d)^2+  \delta^2 + 2 v    }{  |A(b)|^2 }, \\
\label{dotDelta}
\dot{\delta}  &=\dot{(\Re b)} - \dot{a_2} = -  \frac{1}{d} \frac{  (d +\delta)^2+  \frac{1}{2} d (d+\delta)+v }{ |A(b)|^2 }   ,
\\
\label{dotV}
\dot{v}  &=  \frac{v-\delta(d+\delta)}{|A(b)|^2}.
\end{align}
Observe that by \eqref{dotDelta}, $\dot{\delta} <0$ for $t$ in a small neighborhood of $t=T$, so that we can use $\delta$ as a new variable. 

Dividing \eqref{dotD} and \eqref{dotV} by  \eqref{dotDelta}  we get
\begin{align} \label{derivD}
\frac{\partial d}{\partial \delta}  & = -2  \frac{  (\delta+d)^2+  \delta^2 + 2 v }{   (d +\delta)^2+  \frac{1}{2} d (d+\delta)+v}, 
\\
\label{derivV}
\frac{\partial v}{\partial \delta} &=   d \frac{\delta(d+\delta)-v}{ (d +\delta)^2+  \frac{1}{2} d (d+\delta)+v}.
\end{align}
Notice that the right hand sides are analytic at $\delta=0$ and $v=0$ as long as $d>0$. Thus, for any $d_0>0$ there exists an analytic solution $(d(\delta), v(\delta))$ of \eqref{derivD}--\eqref{derivV} satisfying the initial  conditions 
$$
d(0)=d_0, \quad v(0)=0.
$$
The series expansion of this solution at $\delta=0$ gives
\begin{align}
d  & =   d_0 -\frac{4}{3} \delta  -\frac{2}{9 d_0}   \delta ^2 -\frac{20}{27 d_0^2}\delta^3- \frac{43}{81 d_0^3}\delta^4 +\mathcal O(\delta^5) , \nonumber \\
v & =  \frac{1}{3}\delta^2 - \frac{2}{9 d_0} \delta^3 - \frac{1}{18 d_0^2}\delta^4 +\mathcal O(\delta^5). \label{asymptVbisIII}
\end{align}
Replacing it in \eqref{dotDelta} 
yields
\begin{align*} 
\frac{d}{dt}\, (\delta^3)=3 \delta^2 \dot{\delta}  & = - \frac{ 27  }{ 8 d_0    } \left(1 + \frac{ 7  }{ 6 d_0  } \delta +\mathcal O(\delta^2) \right).
\end{align*}
If we denote $q(t)=\delta^3(t)$, we will obtain from here that
$$
\left(1 - \frac{ 7  }{ 6 d_0  } q^{1/3}(t) +\mathcal O(q^{2/3}(t)) \right) dq = - \frac{ 27  }{ 8 d_0    } dt,
$$
or
$$
q(t) - \frac{  7  }{ 8 d_0  } q^{4/3}(t) +\mathcal O(q^{5/3}(t))   = - \frac{ 27  }{ 8 d_0    } (t-T).
$$
We can rewrite it as
$$
\left(\delta - \frac{ 7  }{ 24 d_0  } \delta^2 +\mathcal O(\delta^{3}) \right)^3= - \frac{ 27  }{ 8 d_0    } (t-T),
$$
and finally,
\begin{equation} \label{asymptDeltaTypeIII}	
\delta - \frac{ 7  }{ 24 d_0  } \delta^2 +\mathcal O(\delta^{3}) =  - \frac{ 3  }{ 2 d_0^{1/3}    } (t-T)^{1/3}.
\end{equation}
The analytic function of $\delta$ in the left hand side of \eqref{asymptDeltaTypeIII} is invertible, and straightforward computations yield \eqref{expansionfordeltaIII}.

On the other hand, by \eqref{asymptVbisIII},
$$
\Im b  =  \frac{|\delta|}{\sqrt{3}} \left( 1 - \frac{1}{3 d_0} \delta + \mathcal O(\delta^2) \right), \quad \delta \to 0,  
$$
so that, using \eqref{expansionfordeltaIII}, we arrive at \eqref{expansionforImBIII}.

\end{proof}

Formulas \eqref{expansionfordeltaIII} and \eqref{expansionforImBIII} show that both $\Re(b)-a_2$ and $\Im(b)$ have the asymptotic order $\mathcal O(|t-T|^{1/3})$ in a neighborhood of the critical value $t=T$ at which $b$ impacts $a_2$ and returns to the complex plane, i.e.~where the type III phase transition occurs. This is consistent with the result obtained previously in \cite[Lemma 8.1]{MR1744002}.

It is convenient to point out also a certain universal behavior this transition exhibits, expressed in the fact that the leading terms in \eqref{expansionfordeltaIII} and \eqref{expansionforImBIII} depend on the initial value $d_0$ only. In particular, we see that $b$ impacts $a_2$ from the complex plane with the asymptotic slope of $\pi/6$, both for the incidence and the reflexion angles, as it were for the actual reflection law in Optics. 

We switch now to the analysis of the Robin constant $\rho(S_t)$; in the case we are studying, $S_t=[a_1, a_2]$ in a small neighborhood of $t=T$, so that with $d=a_2-a_1$,
$$
\rho(S_t) = 2\log 2 - \log d .
$$
From \eqref{dotD} we obtain that 
$$
 \frac{d}{dt}\, \rho(S_t) = - \frac{\dot{d}}{d} = - \frac{2}{d^2} \frac{  (\delta+d)^2+  \delta^2 + 2 v    }{  |A(b)|^2 } ,\quad \text{for } 0<|t-T|<\varepsilon.
$$
Using that $\delta, v\to 0$ as $t\to T$ and expressions \eqref{Absquare} and \eqref{asymptVbisIII}, we get that
$$
\lim_{t\to T} \frac{  (\delta+d)^2+  \delta^2 + 2 v    }{  d^2 }=1, \quad \lim_{t\to T} \frac{   |A(b)|^2   }{  \delta^2 } = \frac{4}{3}d_0^2,
$$
and 
$$
\lim_{t\to T} \delta(t)^2 \frac{d}{dt}\,  \rho(S_t) = -\frac{3}{2 d_0^2 } .
$$
Taking into account \eqref{expansionfordeltaIII} we conclude that
\begin{equation} \label{asymptRhoCaseIII}	
\frac{d}{dt}\,  \rho(S_t) =- \frac{2   }{3\, d_0^{4/3}} |t-T|^{-2/3}\left( 1+\mathcal O(1)\right), \quad t\to T.
\end{equation}
In other words, in the case of a singularity of type III, we have again a third order phase transition with an infinite algebraic jump of the third derivative of the free energy at the critical time, with the exponent $-2/3$.

\subsection{Birth of new local extrema}\label{subs:IV}

We finally turn our attention  to the situation created when a pair of complex conjugate zeros $b$ and $\overline b$ of $B$ collide at $b_0\in \R\setminus S_t$ and become two new simple zeros $b_-$, $b_+$ of  $B$. It was mentioned that all $a_k$'s are analytic through $t=T$, and thus by \eqref{polynomialpart}, $B$ and $R_t$ are also analytic functions of the parameter $t$.

Using the notation introduced above, our assumptions can be written as follows:
 for a small $\varepsilon >0$, there exist polynomials $\bm A$, $\bm B$ and $\bm h$, 
  continuous with respect to the parameter $t\in (T-\varepsilon, T+\varepsilon)$, but representing, generally speaking,  different real-analytic functions of $t$ for $t<T$ and $t>T$,  such that
\begin{align*}
A(x;t)&= \bm A(x;t), \quad t\in (T-\varepsilon,T+\varepsilon), \\
B(x;t)&=\begin{cases}
 (x-b)  (x-\overline{b})  \bm B(x;t), & t\in (T-\varepsilon, T),\\ 
(x-b_-)  (x-b_+) \bm B(x;t), & t\in [T, T+\varepsilon, T),\\
\end{cases} \\ 
h(x;t)&=  \bm h(x;t), \quad  t\in (T-\varepsilon, T+\varepsilon),
\end{align*}
where $b$ and $b_{\pm} $  are continuous functions of $t$ such that  
$$
b(t=T-)=b_{-}(t=T+)= b_{+}(t=T+)=b_0\in \R\setminus S_t.
$$ 
Notice that  $b\in \C\setminus \R$ for $t\in (T-\varepsilon, T)$; without loss of generality, $\Im b>0$.  A priori, we do not assume that $b_{\pm} $  are real-valued for $t\in (T, T+\varepsilon)$, so the two possibilities are  either $b_-<b_+$ or $b_-=\overline{b_+}\in \C\setminus \R$.

We denote  $\bm g(x)=\bm h(x)/(\bm A (x)\bm B(x))$, omitting from the notation when possible the explicit dependence on $t$.

From \eqref{odeB} it follows that
\begin{align}
\label{localBmergingIV}
\dot b &= \frac{\bm g(b)}{b-\overline{b}}  , \quad \dot{ \overline{b}} = \overline{ \dot{b}} , \quad t\in ( T-\varepsilon, T), \\
\label{localAmergingIV}	
\frac{d}{dt} (b_\pm)&= \frac{\bm g(b_\pm)}{b_\pm-b_\mp},   \quad t\in (T, T+\varepsilon).
\end{align}
Subtracting/adding both equations in \eqref{localBmergingIV} we easily get that
$$
\frac{d}{dt} \left( (\Im b)^2\right) = - \Re \bm g(b), \quad \frac{d}{dt}   \Re b   = \frac{\Im \bm g(b)}{2 \Im b} .
$$
It follows in particular that  if $b_0$ is not a pole of $\bm g$, 
$$
\Im b(t)-b_0 = \sqrt{\bm g(b_0)} (t-T)^{1/2} \left(1 +o(1)\right), \quad t\to T-.
$$
Observe that these formulas show that the collision of $b$ and $\overline{b}$ on $\R\setminus S_t$ can occur only at a position $b_0$ where
$$
\bm g(b_0)\geq 0.
$$
Moreover, assuming that for $t>T$ the new zeros $b_\pm$ are complex conjugate, the same formulas apply. This yields the partial conclusion: \emph{the scenario when the complex zeros of $B$ collide at $\R\setminus S_t$ and bounce back to the complex plane can occur only when $b_0$ is either a zero or a pole of $\bm g$.} This is the situation, for instance, when $b_0$ coincides with one of the endpoints of $S_t$, and in this case we get a type III phase transition.

Thus, let us assume that
\begin{equation} \label{positivityQ}	
\bm g(b_0) > 0.
\end{equation}
This is always the case, for example, if $S_T$ is a single interval. Then, $b_-<b_+$ for $t\in (T, T+\varepsilon)$.  Denoting again $\delta=(b_+-b_-)/2$, we obtain in the same fashion that
$$
\delta(t)=\sqrt{\bm g(b_0)} (t-T)^{1/2} \left(1 +o(1)\right), \quad t\to T+.
$$

Finally, we have mentioned that a singularity of type III is a limit case of the situation analyzed here, when $b_0$ coincides with one of the $a_k$'s. Furthermore, under assumption \eqref{positivityQ}, a collision of a pair of complex conjugate zeros of $B$  at $b_0\in \R\setminus S_t$  is followed by a type I phase transition (birth of a new cut). However, these two phenomena cannot occur simultaneously: if $b_0\in (a_{2k}, a_{2k+1})$, conditions
$$
\int_{a_{2k}}^{b_0} \sqrt{R_t(y)} dy = \int_{a_{2k}}^{a_{2k+1}} \sqrt{R_t(y)} dy = 0
$$
(see \eqref{periods}) are incompatible.

\section{The quartic external field} \label{sec:Quartic}

In this section we consider in detail a particularly important case of a quartic potential, i.e.~when $m=2$ in the representation \eqref{phiPolyn}. Observe that this is the first non-trivial situation, since for $m=1$ (quadratic polynomial) all calculations are rather straightforward. According to   Section \ref{sec:polyn},  for $t>0$, $S_t$ has the form \eqref{assupport} with either $p=1$ (``one-cut regime" or ``one-cut case") or $p=2$ (``two-cut case").  Additionally to the description of all possible scenarios for the evolution of $S_t$ as $t$ travels the positive semi axis, we characterize here the quartic potentials $\varphi$ for which $S_t$ is connected \emph{for all values of} $t>0$ ($\varphi \in \frak F$ in the notation \eqref{frakF}), as well as those for which the singularity  of type III (higher order vanishing of the density of the equilibrium measure $\lambda_t$) or the birth of new local extrema occur.

Roughly speaking,   the evolution of $S_t$ can be described qualitatively as follows: for the quartic potential $\varphi$  there exists a two-sided infinite sector on the plane, centered at a global minimum of $\varphi$ and symmetric with respect to the horizontal line passing through this minimum and with the slope $0.27872057\dots$, such that  $\varphi \in \frak F$ if and only if the other critical points of $\varphi$ lie outside of this sector. Otherwise,  the positive $t$-semi axis splits into two finite subintervals and an infinite ray. The finite subinterval  containing $t=0$ (which may degenerate to a single point $t=0$) corresponds to the one-cut situation, and for the neighboring finite interval   the support $S_t$ has two connected components. Finally, the infinite ray corresponds again to the one-cut case.

Recall  (see Sections \ref{subsec:classification} and \ref{sec:phasetrans}) that  the transition from one to two cuts occurs always by saturation of the  inequality in \eqref{equilibrium1}, while the transition from two to one cut occurs by collision of some zeros of the right hand side of \eqref{charactMeasure1}.

Let us give the rigorous statements. For any quartic real polynomial $\varphi$ 
with positive leading coefficient we define the value $sl(\varphi)$ as follows: let $\zeta_0\in \R $ denote a point where $\varphi$ attains its global minimum on $\R$ (which can be unique or not), and $\zeta_1\in \C$,  any other critical point of $\varphi$ (zero of $\varphi'$). Then 
\begin{equation*} 
sl(\varphi) = \begin{cases}	\left( \dfrac{\Im \zeta_1}{\zeta_0 - \Re \zeta_1}\right)^2 , & \text{if } \Re (\zeta_1) \neq \zeta_0, \\
 +\infty, & \text{if } \Re (\zeta_1) = \zeta_0. \end{cases}
\end{equation*}
Geometrically, $sl(\varphi)$ is the square of the slope of the straight line joining $\zeta_0$ and $\zeta_1$. Notice that a real cubic polynomial has either 3 real zeros, or one real and two complex conjugate zeros, so that there is no ambiguity in the definition of $sl(\cdot)$.

Next, we define the critical slope: let $s=0.077685\dots$ denote the only positive root of the equation
\begin{equation} \label{identityfors}
32s^3-17s^2+14s-1=0.
\end{equation}	
Explicitly,
$$
s = \frac{1}{96} \left(\tau -\frac{1055}{\tau}+ 17 \right) , \quad \text{with} \quad  \tau = \sqrt[3]{5 \left(3072 \sqrt{6}-3107\right)}>0.
$$
Alternatively, 
\begin{equation} \label{eqfors1}
s=\frac{10-\gamma^2}{30},
\end{equation}	
where
\begin{equation} \label{eqfors2}
\gamma=  \frac{5}{4} \left( \left(\frac{59-24\sqrt{6}}{5}\right)^{1/3} +\left(\frac{59-24\sqrt{6}}{5}\right)^{-1/3} -1 \right)  = 2.76938\dots
\end{equation}	
is the only real solution of the equation
$$
4 \gamma^3 +15 \gamma^2 -200=0.
$$
\begin{theorem} \label{thm:characterization}
Let the  quartic potential $\varphi$ be given, and $s$ denote the critical value as described above. Then,
\begin{description}
\item[Case $sl(\varphi)> s$:] $\varphi \in \frak F$, that is, $S_t$ is a single interval for all values of $t>0$, no singularities occur. Moreover, non-real zeros of $R_t$ in \eqref{charactMeasure1}  move monotonically away from the real line if and only if $sl(\varphi)\geq 1$.

\item[Case $sl(\varphi)= s$:] $\varphi \in \frak F$, that is, $S_t$ is a single interval for all values of $t>0$, but there exists a (unique) value of $t$ for which a type III singularity occurs (and this is the unique phase transition). 

\item[Case $0<sl(\varphi)< s$:] $S_t$ evolves from one cut to two cuts, and then back to one cut, presenting once the birth of new local extrema, a singularity of type  I and a singularity of type II, in this order. No other singularities occur.

\item[Case $ sl(\varphi)= 0$:] if $\varphi$ attains its global minimum at a single point, then $S_t$ evolves from one cut to two cuts, and then back to one cut, presenting once a singularity of type I and a singularity of type II, in this order. No other singularities occur.

If $\varphi$ attains its global minimum at two different points, then $S_t$ evolves from two cuts to one cut, and only a singularity of type II is present.
\end{description}

\end{theorem}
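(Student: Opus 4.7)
My plan is to use the $(A,B)$-representation from Section~\ref{subsec:R-representation} and the dynamical system \eqref{odeB} to trace the evolution of $S_t$ for the quartic $\varphi$. Since $\deg R_t = 6$, the one-cut regime has $\deg A = \deg B = 2$ and the two-cut regime has $\deg A = 4$, $\deg B = 1$. The initial data at $t = 0^+$ is read off from $R_0 = (\varphi')^2$ via \eqref{limitT0}: the global minimum (or minima) of $\varphi$ seed the zeros of $A$, while the remaining critical points of $\varphi$ seed the zeros of $B$. The $t\to\infty$ configuration is always one-cut by homogeneity \eqref{homogeneity}. So the proof reduces to classifying the possible sequences of transitions in between, which is controlled by the behavior of the complex conjugate pair of zeros of $B$ in the one-cut regime.

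In the one-cut regime with $B(z)=(z-b)(z-\bar b)$ and $b=u+iv$ non-real, formula \eqref{odeB} (equivalently \eqref{localBmergingIV}) gives $\dot b=1/((b-\bar b)A(b))$, so
\[
\dot v = -\frac{\Re A(b)}{2v\,|A(b)|^2}, \qquad \Re A(b) = (u-a_1)(u-a_2) - v^2.
\]
Evaluating this at $t=0^+$, where $a_1,a_2\to\zeta_0$ and $b\to\zeta_1$, yields $\Re A(b)\big|_{t=0^+} = (\Re\zeta_1-\zeta_0)^2\bigl(1-sl(\varphi)\bigr)$, so the sign of the initial motion of $\Im b$ is determined by whether $sl(\varphi)$ exceeds $1$. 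A closed-form analysis of the full ODE system \eqref{odeB} in the quartic setting, where all right-hand sides are explicit rational functions of the four phase variables $a_1,a_2,u,v$, should establish that $\Re A(b)$ cannot return to positive values once it has become negative (by examining $\frac{d}{dt}\Re A(b)$ on the locus $\Re A(b)=0$). From this I deduce that $\Im b$ is monotonically increasing for all $t>0$ exactly when $sl(\varphi)\ge 1$, proving the second part of Case~1. For $s<sl(\varphi)<1$, $\Im b$ first decreases and then increases, never reaching zero, so no phase transition occurs, yielding the remainder of Case~1.

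The threshold $sl(\varphi)=s$ is defined by the algebraic condition that the complex pair of $B$ meets an endpoint of $S_t$, producing a type~III singularity, i.e.\ $R_t(z)=(z-\alpha)^5(z-\beta)$ at the critical time. After an affine change of variable bringing $\zeta_0=0$ and $\Re\zeta_1=1$, one has $\varphi'(x)=x^3-2x^2+(1+sl(\varphi))x$. Matching the coefficients of $z^5$, $z^4$, $z^3$ in $(z-\alpha)^5(z-\beta)$ with those of $(\varphi')^2$ (these coefficients of $R_t$ are $t$-independent by \eqref{RcoefGeneral}) yields
\[
5\alpha+\beta = 4, \qquad 15\alpha^2-20\alpha+6+2s=0, \qquad 10\alpha^2(1-\alpha) = 1+s.
\]
Eliminating $\alpha$ and $\beta$ produces, after routine algebra (solving the first two for $\alpha$ as a rational function of $s$ and substituting into the third), the cubic $32s^3-17s^2+14s-1=0$, matching \eqref{identityfors}. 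For $0<sl(\varphi)<s$, continuous dependence forces the complex pair of $B$ to meet $\R$ instead at an interior point $b_0\in\R\setminus S_t$; by Subsection~\ref{subs:IV}, this produces two real simple zeros of $B$ at $b_0$ (birth of new local extrema). One of these must then trigger a type~I transition by saturation of \eqref{equilibrium1}, since it cannot persist in $\R\setminus S_t$ indefinitely given that the $t\to\infty$ state is one-cut and covers the asymptotic range of $\lambda_t$; the newly opened two-cut configuration must finally close via a type~II merger before $t\to\infty$.

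The case $sl(\varphi)=0$ is handled analogously: with a unique global minimum $\zeta_0$, the other two critical points $\zeta_1<\zeta_2$ are real, $B$ starts with two real simple zeros near them, and the same dynamical argument yields precisely one type~I followed by one type~II. With two distinct global minima, $S_t$ starts two-cut and only a type~II transition arises. The main technical obstacle in all of this is the global control of the sign of $\Re A(b)$ throughout $t>0$ in Case~1 (to secure monotonicity everywhere, not just initially), together with the verification that in Cases~3 and~4 exactly one of each transition type occurs in the stated order without any additional singularities interspersed. Both rely on the algebraic rigidity coming from $(\varphi')^2-R_t$ being a polynomial of degree at most $2$ in $z$ (see \eqref{RcoefGeneral}), which sharply constrains how $R_t$ can evolve with $t$ in the quartic case.
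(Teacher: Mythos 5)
Your overall strategy coincides with the paper's: normalize by affine changes so that $\varphi'(x)=x(x-\alpha)(x-\beta)$, track the $(A,B)$-representation through the dynamical system \eqref{odeB}, read off initial data from $R_0=(\varphi')^2$ via \eqref{limitT0}, and classify by the trajectory of the non-real zeros of $B$. Your local computation at $t=0^+$, $\Re A(b)\big|_{t=0^+}=(\Re\zeta_1-\zeta_0)^2(1-sl(\varphi))$, is correct and pinpoints the threshold $sl(\varphi)=1$ for the \emph{initial} sign of $\frac{d}{dt}\Im b$. The algebra for the critical cubic is right: matching coefficients of $z^5,z^4,z^3$ in $(\varphi')^2$ and $(z-\alpha)^5(z-\beta)$, with $\varphi'(x)=x^3-2x^2+(1+s)x$, eliminates $\alpha,\beta$ and yields $288s^3-153s^2+126s-9=0$, i.e.\ $32s^3-17s^2+14s-1=0$ after dividing by $9$, as in \eqref{identityfors}.

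However, two significant gaps remain, both of which you flag but do not close. First, the assertion that ``$\Re A(b)$ cannot return to positive values once it has become negative'' does not follow from computing $\frac{d}{dt}\Re A(b)$ on the locus $\Re A(b)=0$ alone; the paper instead observes that $\{\Re A(z)<0\}$ determines a rectangular hyperbola $\Gamma_t$ with inner component $U_t$ containing $[a_1,a_4]$, combines \eqref{ode1bis} (which says $\frac{d}{dt}\Im b_1>0 \iff b_1\in U_t$) with the monotone growth $U_{\tau_1}\subset U_{\tau_2}$ for $\tau_1<\tau_2$ inherited from $S_{\tau_1}\subset S_{\tau_2}$, and deduces global monotonicity once $b_1$ enters $U_t$. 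Without this nesting argument, your ``if and only if $sl(\varphi)\ge1$'' statement is only proven at $t=0^+$. Second, the claim that in the range $0<sl(\varphi)<s$ exactly the sequence birth-of-local-extrema $\to$ type~I $\to$ type~II occurs, in that order and with no extras, is justified in the paper by a quantitative argument: the function $F(t)=\int_{a_2}^{b_2}\sqrt{A}\,B$ is shown to be strictly decreasing via \eqref{derivativesMeasure}, positive at $t=0^+$, and negative at the finite collision time of $a_2$ and $b_1$ (which the paper bounds explicitly by comparison ODE estimates), so that $T_1$ is the unique zero of $F$; your appeal to ``cannot persist in $\R\setminus S_t$ indefinitely'' is a heuristic that needs this machinery behind it. In short, your route is the paper's route, your key computations are correct, but the global monotonicity via $\Gamma_t/U_t$ nesting and the $F$-function argument pinning down $T_1$ are exactly the two ingredients you would need to supply to make the proof complete.
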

We obviously consider  $t>0$; the value $t=0$ is not regarded as a singularity.

\begin{remark}
Observe that for $\varphi$ with more than one local extrema on $\R$ there are no type III phase transitions. 

We can also easily characterize the quartic external fields $\varphi$ for which singularity of type III occurs (i.e. such that the zeros of $\varphi'$ lie on the critical line) directly in terms of their coefficients. Indeed, if $\varphi'(x)= x^3 + d_2\,  x^2 + d_1 \, x + d_0$, then 
\begin{equation} \label{subs1}	
\varphi'\left( x-\frac{d_2}{3}\right) = x^3 + \left(d_1-\frac{d_2^2}{3} \right)x + \frac{2 d_2^3}{27}-\frac{d_1
   d_2}{3}+d_0.
\end{equation}
So, without loss of generality, we may assume that
\begin{equation} \label{phispecial}	
\varphi'(x)= x^3 +  d_1 \, x + d_0.
\end{equation}
Let $x_0$ be a real zero of $\varphi'$; according to Theorem \ref{thm:characterization}, $\lambda_t$ will have a type III singularity for a certain value of $t$ if and only if the other two zeros of $\varphi'$ are of the form $u \pm i v$, $u, v\in \R$, and the value
$$
s =  \left( \frac{v}{u-x_0}\right)^2 
$$
is a root of the polynomial \eqref{identityfors}. 
In particular, 
\begin{equation}\label{deriv2}
\varphi' (x) = (x-x_0)\left((x-u)^2+v^2\right) = (x-x_0)\left((x-u)^2+s (u-x_0)^2\right).
\end{equation}
Comparing  \eqref{phispecial} and \eqref{deriv2} we conclude that
\begin{equation}\label{system}
 2u+x_0=0,\quad   u^2+ s (x_0-u)^2+2x_0 u = d_1,\quad  x_0(d_1 -2x_0 u) = d_0.
\end{equation}
Eliminating $x_0$ and $ u$ from \eqref{system} we find that
\begin{equation}\label{surface}
  729 d_0^2 \, s^3 -  81 (4d_1^3+ 9d_0^2)s^2+9 ( -8 d_1^3+27 d_0^2)s   -4d_1^3 -27d_0^2 = 0.
\end{equation}
The resultant of  the polynomials in the left hand side of \eqref{identityfors} and \eqref{surface} is an integer multiple of
$\left(128 d_1^3+135 d_0^2\right)^3$. Since both polynomials share a common root, $s$, the resultant must vanish, and we conclude the following: 
\begin{corollary}
For an external field $\varphi'$ with derivative of the form \eqref{phispecial} the equilibrium measure $\lambda_t$ develops a type III singularity if and only if
$$
128 d_1^3+135 d_0^2 =0.
$$
\end{corollary}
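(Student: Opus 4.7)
The plan is to invoke Theorem \ref{thm:characterization} and then reduce the question to a resultant computation. By that theorem, the equilibrium measure $\lambda_t$ exhibits a type III singularity for some $t>0$ precisely when $sl(\varphi)=s$, where $s$ is the unique positive root of the cubic \eqref{identityfors}. Since $s>0$, this forces $\varphi'$ to have one real root $x_0$ and a genuinely complex conjugate pair $u\pm iv$ with $v\ne 0$, related to $s$ by $s=(v/(u-x_0))^2$.

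First, I would reduce to the case $\varphi'(x)=x^3+d_1x+d_0$ via the translation \eqref{subs1}; this is a pure shift and does not affect the occurrence or the nature of any phase transition. Next, I would expand the factorization \eqref{deriv2} and identify coefficients with $x^3+d_1x+d_0$, producing the algebraic system \eqref{system} in the unknowns $x_0,u$ (with $s$ appearing as a parameter). Eliminating $x_0$ and $u$ from \eqref{system}, using the first equation to express $u=-x_0/2$ and then substituting, yields exactly the cubic \eqref{surface} in $s$, with coefficients polynomial in $d_0,d_1$.

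At this stage both \eqref{identityfors} and \eqref{surface} are cubics in $s$, and the existence of a common root is equivalent to the vanishing of their resultant with respect to $s$. A direct Sylvester-matrix computation shows this resultant equals an integer multiple of $(128d_1^3+135d_0^2)^3$. Hence a type III singularity can occur only if $128d_1^3+135d_0^2=0$.

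The main obstacle is the careful verification of the converse direction. Assuming $128d_1^3+135d_0^2=0$, one must confirm that the common root of the two cubics is precisely the specific value $s\in(0,1)$ of \eqref{identityfors} singled out in Theorem \ref{thm:characterization}, rather than some extraneous root of \eqref{surface}, and then that the system \eqref{system} can be inverted to give real $x_0,u$ together with a strictly positive $v^2=s(u-x_0)^2$. Parametrizing $d_1=-\tfrac{3}{2}\alpha^2$ (the sign of $d_1$ is forced by $v^2>0$ combined with the second equation of \eqref{system}) and solving \eqref{system} explicitly for $x_0$ and $u$ in terms of $\alpha$ makes this final check routine, closing the argument.
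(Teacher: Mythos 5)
Your proposal follows the paper's own argument step for step: reduce to the depressed cubic via \eqref{subs1}, invoke Theorem \ref{thm:characterization} to characterize a type~III singularity by $sl(\varphi)=s$ with $s$ the positive root of \eqref{identityfors}, expand \eqref{deriv2} to get the system \eqref{system}, eliminate $x_0,u$ to reach \eqref{surface}, and take the resultant of \eqref{identityfors} and \eqref{surface} in $s$, which is an integer multiple of $\bigl(128d_1^3+135d_0^2\bigr)^3$. You are right that the paper leaves the converse (``if'') direction implicit — that vanishing of the resultant forces the shared root to be the admissible $s$ and that \eqref{system} then inverts to real $x_0,u$ with $v^2=s(u-x_0)^2>0$ — and your concluding sketch of how to close that gap is a sensible addition rather than a different route.
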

Using the substitution \eqref{subs1} we can easily extend this result to the general case: \emph{for an external field $\varphi'$ such that 
$$
\varphi'(x)= x^3 + d_2\,  x^2 + d_1 \, x + d_0,
$$
the equilibrium measure $\lambda_t$ develops a type III singularity if and only if} 
$$
128 d_1^3+ 135 d_0^2- d_2\left( d_2 (2 d_2^2-9 d_1)^2 +2  (16 d_1^2 d_2 +45 d_0 d_1 -10 d_0 d_2^2 )\right)=0.
$$
For instance, direct substitution shows that $\varphi'(x) = x^3+4x^2+2x-8$ satisfies this condition. 

According to \eqref{(A,B)-representation}, in this case we will have a singularity of Type III for a finite value of $t=T$, where the density of the equilibrium measure vanishes with the exponent $5/2$. The value of $T$ can be found by the procedure described at the end of this section. 
\end{remark}

Theorem \ref{thm:characterization} is a consequence of Theorems \ref{thm:mainresult} and \ref{thm:mainresult2} below, where some additional finer results on the dynamics of the equilibrium measure as a function of $t$ are established.

Since the problem is basically invariant under homotopy, horizontal and vertical shifts in the potential $\varphi$, as well as mirror transformation $x\mapsto -x$ of the variable, in the rest of this section without loss of generality we assume that
\begin{equation}\label{newexternalfield1}
\varphi(0)=0=\min \{\varphi(x):\, x\in \R \}, \quad \text{and} \quad \varphi' (x) =   x(x-\alpha)(x-\beta) ,  
\end{equation}
with both $\alpha$ and $\beta$ in the closed right half plane. We have
\begin{equation}\label{expressionPhi}
\varphi(x)=\frac{1}{4} \, x^4 + t_3 \, x^3 + t_2\,  x^2, \quad t_3= -\frac{1}{3}    (\alpha+\beta), \quad t_2= \frac{1}{2} \alpha \beta,
\end{equation}
so that we may suppose that one of the following two generic situations takes place:
\begin{description}
\item[Case 1:] $\varphi'$ has three real roots, and $0<\alpha<\beta< 2\alpha$;
\item[Case 2:] $\varphi'$ has one real root, at $x=0$, $\alpha $ is in the first quadrant, and  $\overline{\alpha}=\beta$.
\end{description}
Case 1 is equivalent to saying that $\varphi$ has on $\R$ two local minima, 
at $x=0$ and $x=\beta$, and a maximum at $x=\alpha$, in such a way that $0=\varphi (0)<  \varphi (\beta)<\varphi(\alpha)$ (obviously, any general situation when $\varphi'$ has three real roots can be reduced to Case 1 by an affine change of variables and by adding a constant to $\varphi$). 
Case 2 means that $\varphi$ has only one local extremum on the real line.
The limit cases $\alpha=\beta$ and $\beta=2\alpha$ will be discussed in Remark~\ref{remark8} after Theorem~\ref{thm:mainresult}. 
In this way, the only case excluded from the analysis is  $\varphi(x)=x^4/4$, for which $sl(\varphi)=+\infty$ and the situation is trivial.

For $t>0$ the polynomial $R_t$ in the right hand side of \eqref{charactMeasure1} has degree 6, and the identity \eqref{RcoefGeneral} takes the form
\begin{equation}\label{Rcoef}
R_t(x) = (\varphi'(x))^2 - 2t\, x^2-d_t\, x-e_t =A(x)B^2(x),
\end{equation}
for certain constants $d_t, e_t\in \R$.
The following technical result will be useful in what follows:
\begin{lemma}\label{lemma:equations}
Assume that for $R_t$ in \eqref{Rcoef},  
\begin{equation}\label{2ndtransition}
A(z)=(z-a)(z-c), \quad B(z)=(z-b)^2,
\end{equation}
or equivalently, $R_{t}(z)\,=\,(z-a)(z-b)^4(z-c)$. Then if $\varphi$ is given by \eqref{expressionPhi}, the following identities hold: 
\begin{equation}\label{system2nd}
\begin{cases}
a+c+4b = 2(\alpha+\beta), \\
6b^2 +4b(a+c)+ac = (\alpha+\beta)^2+2\alpha \beta , \\
b(2b^2+3b(a+c)+2ac)\,=\,\alpha \beta(\alpha +\beta), \\
b^2(b^2+4b(a+c)+6ac)=\alpha^2\beta^2-2t.
\end{cases}
\end{equation}
\end{lemma}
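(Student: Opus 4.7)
The plan is essentially bookkeeping: the claim is a polynomial identity obtained by equating coefficients on both sides of
$$ (\varphi'(x))^2 - 2t\, x^2 - d_t\, x - e_t \;=\; (x-a)(x-b)^4(x-c), $$
which is exactly \eqref{Rcoef} under the assumption \eqref{2ndtransition}. Since $R_t$ is of degree $6$ and the left-hand side has no free parameters beyond $t, d_t, e_t$ (which appear only in the coefficients of $x^2, x^1, x^0$), comparing coefficients in degrees $5, 4, 3, 2$ must yield constraints that involve only $\alpha,\beta,a,b,c,t$, and these should be precisely the four equations in \eqref{system2nd}.

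First I would compute $(\varphi'(x))^2$. Using $\varphi'(x)=x^3-(\alpha+\beta)x^2+\alpha\beta\, x$ from \eqref{newexternalfield1}--\eqref{expressionPhi}, squaring gives
$$ (\varphi'(x))^2 = x^6 - 2(\alpha+\beta)x^5 + \bigl((\alpha+\beta)^2+2\alpha\beta\bigr)x^4 - 2\alpha\beta(\alpha+\beta)x^3 + \alpha^2\beta^2\, x^2, $$
with no $x^1$ or $x^0$ terms (since $\varphi'(0)=0$). Subtracting $2tx^2+d_tx+e_t$ only affects the $x^2, x^1, x^0$ coefficients.

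Next I would expand $(x-a)(x-b)^4(x-c)$. Writing $(x-b)^4=\sum_{j=0}^4 \binom{4}{j}(-b)^{4-j}x^j$ and multiplying by $(x-a)(x-c)=x^2-(a+c)x+ac$, I can collect the coefficients of $x^5,x^4,x^3,x^2$ as
\begin{align*}
[x^5] &= -\bigl(a+c+4b\bigr),\\
[x^4] &= 6b^2+4b(a+c)+ac,\\
[x^3] &= -\bigl(4b^3+6b^2(a+c)+4b\cdot ac\bigr),\\
[x^2] &= b^4+4b^3(a+c)+6b^2\, ac.
\end{align*}
Equating these with the corresponding coefficients on the left-hand side gives precisely the four relations in \eqref{system2nd}, after an obvious factor of $2$ cancellation in the third equation. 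The $x^1$ and $x^0$ coefficients of the product determine $d_t$ and $e_t$, which are not part of the claim and can be discarded.

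The only ``obstacle'' is arithmetic care in the expansion, but there is no substantive difficulty: the lemma is a direct consequence of matching coefficients in \eqref{Rcoef} under the factorization \eqref{2ndtransition}, and the equivalence between $R_t(z)=A(z)B^2(z)$ and $R_t(z)=(z-a)(z-b)^4(z-c)$ under \eqref{2ndtransition} is immediate.
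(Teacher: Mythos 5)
Your proof is correct and is exactly the approach the paper takes (the paper's proof is one sentence: replace \eqref{2ndtransition} in \eqref{Rcoef} and equate coefficients). Your expansion of $(\varphi')^2$ and of $(x-a)(x-b)^4(x-c)$, and the matching of the $x^5, x^4, x^3, x^2$ coefficients (with the factor-of-$2$ cancellation in the third equation), reproduce the four relations in \eqref{system2nd} verbatim.
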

\begin{proof}
This is a straightforward consequence of replacing \eqref{2ndtransition} in  \eqref{Rcoef} and equating the coefficients in both sides.
\end{proof}
\begin{remark}\label{remk1}
It is important to observe that the identities \eqref{system2nd} remain valid under a homothetic transformation
$$
a \mapsto q a, \quad b \mapsto q b, \quad c \mapsto q c, \quad \alpha \mapsto q \alpha, \quad \beta \mapsto q \beta, \quad t \mapsto q^4 t,
$$
for $q>0$. In other words, a linear scaling in space yields a quartic scaling in time (or temperature).
\end{remark}

Let us consider first Case 1 (with strict inequalities). The theorem below is the quantitative description of the following evolution: for small values of  temperature $t$ a single cut is born in a neighborhood of the origin. The other two (double) zeros of $R_t$ are real and close to $x=\alpha$ and $x=\beta$, moving in opposite directions. At the first critical temperature $t=T_1$ a bifurcation of type I occurs: the rightmost double zero splits into two simple real zeros, giving birth to a second cut in the spectrum. This configuration is preserved until both cuts merge at a quartic point at a temperature $t=T_2$ (phase transition of type II). After that, two complex conjugate double roots of $R_t$ drift away to infinity in the complex plane, so that for the remaining situation we are back in the one-cut case.

For the sake of convenience, here we use all introduced notations interchangeably, 
$$
\dot{a} =\frac{d}{dt}a =\partial_0 a.
$$
\begin{theorem}\label{thm:mainresult}
Let $\varphi$ be an external field given by \eqref{newexternalfield1} with $0<\alpha<\beta<2\alpha$. Then there exist two critical values $0<T_1<T_2$ such that:  \begin{itemize}
\item \textbf{(Phase 1):}
for $0<t<T_1$, $A$ and $B$ in \eqref{Rcoef} have the form
\begin{equation} \label{phase1}
A(x)=(x-a_1)(x-a_2), \quad B(x)=(x-b_1)(x-b_2),
\end{equation}	
with  $a_1<0<a_2<b_1 <b_2$, and $S_t=[a_1, a_2]$ (one-cut case).

Parameters $a_k$, $b_k$ are functions of $t$ and satisfy the system of differential equations
\begin{equation}
\label{ode1}
\dot{ a_k }=  \frac{2 }{A'(a_k) B(a_k)}, \quad \dot{b_k}    =  \frac{1 }{A(b_k) B'(b_k)},  \quad k=1,   2 ,
\end{equation}
with the initial values
$$
a_1(t=0)=a_2(t=0)=0, \quad b_1(t=0)=\alpha, \quad b_2(t=0)=\beta.
$$
For $0<t<T_1$, 
\begin{equation} \label{signs1}
\dot{a_1} <0, \quad \dot{ a_2} >0, \quad \dot{ b_1} <0, \quad \dot{ b_2} >0,
\end{equation}	
and function
\begin{equation} \label{def:F}
F(t)=\int_{a_2(t)}^{b_2(t)} \sqrt{A(s)} B(s)\,ds 
\end{equation}	
is monotonically decreasing and positive in $(0,T_1)$, with
$$
F(0)=\varphi(\beta)=\frac{(2\alpha-\beta)\beta^3}{12}>0 \quad \text{and} \quad F(T_1)=0.
$$

\item \textbf{(1st Transition, phase transition of type I):} for $t=T_1$ such that $F(T_1)=0$ we have
$S^{T_1}\setminus S_{T_1}=\{b^{(1)}\}$, with $b^{(1)}:=b_2(t=T_1)$, which is a singular point of type I (``birth of a cut'').

\item  \textbf{(Phase 2):}
for $T_1<t<T_2$, $A$ and $B$ in \eqref{Rcoef} have the form
\begin{equation} \label{formulaPhase2}
A(x)=(x-a_1)(x-a_2)(x-a_3)(x-a_4), \quad B(x)=x-b_1 ,
\end{equation}	
with  $a_1<0<a_2<b_1 <a_3<b^{(1)}<a_4$, and $S_t=[a_1, a_2]\cup [a_3,a_4]$ (two-cut case).

Moreover, these values satisfy the system of differential equations
\begin{equation}
\label{ode2}
\dot{a_k}   =  \frac{2 (a_k-\zeta) }{A'(a_k) B(a_k)},   \quad k=1,   \dots, 4 , \quad \text{and} \quad \dot{b_1}   =  \frac{b_1-\zeta }{A(b_1) }.
\end{equation}
Parameters $a_1(t)$ and $a_2(t)$ are continuous at $t=T_1$, while
\begin{equation} \label{initial1}
a_3(t=T_1)=a_4(t=T_1)= \zeta(t=T_1)=b^{(1)}.
\end{equation}	
The value of $\zeta\in (a_2,a_3)$ is determined by
 $$
 \int_{a_{2}}^{a_{3}} \frac{x-\zeta}{\sqrt{A (x)}} \, dx =0.
 $$
In particular,
$$
\dot{a_1}, \dot{a_3}<0, \quad \text{and} \quad \dot{a_2}, \dot{a_4}>0.
$$
The critical temperature $T_2$ is determined by the collision condition $a_2(t=T_2)=a_3(t=T_2)$.

\item \textbf{(2nd Transition, phase transition of type II):} for $t=T_2$,  $A$ and $B$ in \eqref{Rcoef} have the form
$$
A(x)=(x-a_1)(x-a_4), \quad B(x)=(x-b^{(2)})^2,
$$
where
$$
b^{(2)}=\lim_{t\to T_2-}a_2=\lim_{t\to T_2-}a_3=\lim_{t\to T_2-}\zeta=\lim_{t\to T_2-}b_1,
$$
so that  $a_1<0< b^{(2)}< b^{(1)}<a_4$,  and $S_t=[a_1, a_4]$. Points $a_1$, $b^{(2)} $ and $a_4$ can be found from the first three equations in \eqref{system2nd} (with $a=a_1$, $b=b^{(2)}$ and $c=a_4$), while the value of $T_2$ is obtained from the fourth equation \eqref{system2nd}.

Since $b^{(2)} \in S_{T_2}$,  $x=b^{(2)}$ is a singular point of type II, and $R_{T_2}$ has a root of order 4 at $z=b^{(2)}$.

\item  \textbf{(Phase 3):}
for $t>T_2$, $A$ and $B$ in \eqref{Rcoef} have the form
\begin{equation} \label{phase3}
A(x)=(x-a_1)(x-a_4), \quad B(x)=(x-b_1)(x-\overline{b_1}),
\end{equation}	
with $a_1<0< b^{(2)} < b^{(1)}<a_4$, $\Im b_1>0$, and $S_t=[a_1, a_4]$ (one-cut case).

Moreover, these values satisfy the system of differential equations of the form \eqref{ode1} (setting now $b_2=\overline{b_1}$); in particular, $\Im b_1$ grows monotonically with $t$, and 
\begin{equation} \label{center_mass}	
\lim_{t\to +\infty} \Im b_1=+\infty, \quad  \lim_{t\to +\infty} \Re b_1=\lim_{t\to +\infty} \frac{a_1+a_4}{2}=\frac{\alpha+\beta}{3} .
\end{equation}

\end{itemize}
\end{theorem}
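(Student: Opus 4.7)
The plan is to trace $t\mapsto(S_t,\lambda_t)$ phase by phase using the $(A,B)$-representation together with the differentiation formulas of Theorem \ref{thm:Dynamical System-t}. The initial configuration for small $t>0$ follows from \eqref{limitT0}: since $R_0=(\varphi')^2=x^2(x-\alpha)^2(x-\beta)^2$ has three real double zeros and the one at the global minimum $x=0$ lies in $\supp(\lambda_t)$, continuity yields the structure \eqref{phase1} with $a_1<0<a_2<b_1<b_2$ bifurcated from $0,\alpha,\beta$. In this regime $p=1$ and $h\equiv 1$, so Theorem \ref{thm:Dynamical System-t} delivers \eqref{ode1}, and the sign pattern \eqref{signs1} is read off directly from the ordering of the zeros using \eqref{ode1}.

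The central technical step is the analysis of $F(t)$. Using \eqref{totalpotential} together with the sign analysis of $C^{\lambda_t}+\varphi'=\pm\sqrt{R_t}$ on $(a_2,b_2)$, I will identify $F(t)=W_\varphi^{\lambda_t}(b_2)-c_t$, so the equilibrium inequality in \eqref{equilibrium1} at $b_2$ is precisely $F\geq 0$ and a type I singularity occurs exactly when $F(T_1)=0$. The initial value $F(0)=(2\alpha-\beta)\beta^3/12>0$ is a direct computation using $\beta<2\alpha$. For monotonicity I will differentiate under the integral; the boundary contributions vanish because $A(a_2)=B(b_2)=0$, and from \eqref{dynamics} with $h=1$ the integrand satisfies $\partial_t(\sqrt{A}\,B)=(B\dot A+2A\dot B)/(2\sqrt{A})=-1/\sqrt{A}$, giving $\dot F(t)=-\int_{a_2}^{b_2}dx/\sqrt{A(x)}<0$. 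Finiteness of $T_1$ then follows from an a priori bound ruling out that the one-cut trajectory escapes to infinity while $F$ stays positive; this global control is the principal obstacle in this part of the proof.

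With Phase 1 controlled, the remaining arguments are systematic. The bifurcation at $T_1$ is exactly the type I scenario of Subsection \ref{subs:birth}, which supplies the two new endpoints $a_3,a_4$ emerging from $b^{(1)}$ together with the initial conditions \eqref{initial1}. In Phase 2, equations \eqref{ode2} are Theorem \ref{thm:Dynamical System-t} specialized to $p=2$, $h(x)=x-\zeta$; a sign analysis using \eqref{ode2} shows $\dot a_1<0<\dot a_2$ and $\dot a_3<0<\dot a_4$, so only the inner gap $(a_2,a_3)$ can close, forcing a type II merger at the unique $T_2$ at which $a_2$ meets $a_3$. The explicit determination of $a_1,b^{(2)},a_4$ and $T_2$ follows from Lemma \ref{lemma:equations} applied to the merger configuration. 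For $t>T_2$, the type II analysis of Subsection \ref{subs:merge} yields the complex-conjugate splitting \eqref{phase3}, and \eqref{ode1} applies again with $b_2=\overline{b_1}$. Finally, the limits \eqref{center_mass} come from comparing the $x^5$ coefficient in $R_t=AB^2=(\varphi')^2-2tx^2-d_t x-e_t$, which gives $a_1+a_4+4\Re b_1=2(\alpha+\beta)$; combined with the rescaling \eqref{homogeneity} (under which the even leading term $x^4/(4t)$ of $\varphi/t$ dominates as $t\to\infty$), the support becomes asymptotically symmetric about $\Re b_1$, so $6\Re b_1\to 2(\alpha+\beta)$, while the $-2tx^2$ term forces $|b_1|\to\infty$, hence $\Im b_1\to\infty$.
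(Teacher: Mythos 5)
Your overall route is the same as the paper's: start from \eqref{limitT0}, use the $(A,B)$-representation and the dynamical system of Theorem~\ref{thm:Dynamical System-t} phase by phase, track $F(t)$, invoke the type~I and type~II local analyses for the transitions, and use Lemma~\ref{lemma:equations} to pin down the type~II configuration. The Phase~1 computations (signs, $\dot F=-\int_{a_2}^{b_2}dx/\sqrt{A(x)}<0$, $F(0)=\varphi(\beta)$) and the identification of $F$ with the slack in the equilibrium inequality at $b_2$ are correct and match what the paper does. However there are two genuine gaps.

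First, you explicitly punt on the existence of $T_1$: ``finiteness of $T_1$ then follows from an a priori bound\dots this global control is the principal obstacle.'' This is precisely what the paper has to prove, and it is not a formality. The paper establishes that for \emph{any} initial data the pair $(a_2,b_1)$ collides in finite time $T$, by bounding the outer endpoints (from $\dot b_2\le (b_2-b_1^0)^{-3}$ and the analogous estimate for $a_1$ one gets $b_2\le b_1^0+(4t+(b_2^0-b_1^0)^4)^{1/4}$, $a_1\ge a_2^0-(8t+(a_2^0-a_1^0)^4)^{1/4}$), plugging these into $\dot a_2$ and $\dot b_1$ to obtain $\frac{d}{dt}(b_1-a_2)^2<-\,6\,[(b_1^0-G_2)(G_1-a_2^0)]^{-1}$, and observing that the time integral of the right side diverges. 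Then $F(T)<0$ while $F(0)>0$ and $\dot F<0$ forces $F(T_1)=0$ at a unique $T_1<T$. Without some version of this, Phase~1 could in principle last forever and the theorem's conclusion about $T_1$ would be unproved.

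Second, for Phase~3 you state that \eqref{ode1} applies with $b_2=\overline{b_1}$ and then pass directly to the limits \eqref{center_mass}, but you never show that the one-cut regime persists for all $t>T_2$, i.e.\ that $b_1$ never returns to the real axis (which would produce yet another transition). The paper settles this by computing $\frac{d}{dt}(\Im b_1)^2=-\Re A(b_1)/|A(b_1)|^2$, interpreting $\{\Re A=0\}$ as a rectangular hyperbola $\Gamma_t$ with vertices at $a_1,a_4$, noting that $\Im b_1$ increases exactly when $b_1$ lies inside the component $U_t$ of its complement, and using the monotonicity $\tau_1<\tau_2\Rightarrow U_{\tau_1}\subset U_{\tau_2}$ coming from $\dot a_1<0<\dot a_4$; once $b_1\in U_{T_2+}$ it stays there. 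This step is needed and is not mentioned in your plan. Related to this, your derivation of \eqref{center_mass} by appealing to ``asymptotic symmetry under the rescaling \eqref{homogeneity}'' is only heuristic; the paper instead derives $\frac{d}{dt}\frac{a_1+a_4}{2}=-2\frac{d}{dt}\Re b_1=\frac{2\Re b_1-(a_1+a_4)}{|A(b_1)|^2}$ (so the two centers of mass are in collision course), and then uses the coefficient system \eqref{systemph3} to show that the collision $a_1+a_4=2\Re b_1$ can only occur in the excluded symmetric case $\beta=2\alpha$, hence the common limit exists and equals $(\alpha+\beta)/3$ by the $x^5$-coefficient identity you also wrote down. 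I'd replace your rescaling heuristic with this argument.
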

\begin{remark} \label{remark8}
Let us consider the limiting situations in the Case 1. 

If $\alpha=\beta>0$, $\varphi'$ has a double zero at $\alpha$. 
Setting $\beta = \alpha$ in the system of nonlinear equations \eqref{system2nd} and eliminating the variables $a$ and $c$ from the first two  equations yields
$$5b^3-10\alpha b^2+6\alpha^2 b-\alpha^3 = 0\,,$$
which has only  $3$ real solutions for $b$: obviously, $b=\alpha$ (for which  $a=c=0$, and by the fourth equation in \eqref{system2nd}, $t=0$) and $b = \frac{\alpha}{2}\,\left(1\pm\,\frac{\sqrt{5}}{5}\right)$. Thus, by Lemma~\ref{lemma:equations} there are at most $3$  values of $t$ for which $B$ can have a double zero. In particular, this implies that there exists $\varepsilon>0$ such that for $0<t<\varepsilon$ equation \eqref{phase1} holds, so that Theorem~\ref{thm:mainresult} is valid in this case also.

On the other hand, if $\beta=2\alpha$, we have $ \varphi (\beta)=0$, which is essentially equivalent to an even external field. In this case $F(0)=0$, and the evolution is as described in Theorem~\ref{thm:mainresult}, except for $T_1=0$ (Phase 1 is missing).
\end{remark}

\begin{proof}
Using \eqref{limitT0} and representation \eqref{phirepre} we conclude that there exists $\varepsilon>0$ such that for $0<t<\varepsilon$ we are in the one-cut case, and formulas \eqref{phase1} hold, with both $b_1$ and $b_2$ close to $\alpha$ and $\beta$, respectively. In this situation, $h=1$, so that
  \eqref{ode1} is a particularization of \eqref{odeB}, and the inequalities \eqref{signs1} are just straightforward consequences of \eqref{ode1}.
Observe also that  \eqref{periods}  implies that
$$ \int_{a_2}^{b_2}  \sqrt{A(s)}B(s)\,ds > 0.$$

Furthermore, for any finite initial positions $a_k(t=0):=a_k^0$, $b_k(t=0):=b_k^0$, with  $a_1^0< a_2^0<b_1^0 <b_2^0$, the solution of the system of differential equations \eqref{ode1}
 exhibits collision (of $a_2$ and $b_1$) in finite time. Indeed, by \eqref{signs1}, for $t>0$ (and before the collision),
 $$
a_1<a_1^0<a_2^0<a_2<b_1<b_1^0<b_2^0<b_2.
 $$
Since $ \dot{b_2}\leq (b_2-b_1^0)^{-3}$, solving the corresponding ODE we conclude that
$$
b_2\leq G_1(t):= b_1^0 + \left( 4 t + (b_2^0-b_1^0)^4\right)^{1/4}.
$$
Analogously,
$$
a_1\geq G_2(t):=a_2^0- \left( 8 t + (a_2^0-a_1^0)^4\right)^{1/4}.
$$
Replacing these bounds in \eqref{ode1} we get
\begin{align*}
\dot{a_2}> & \frac{2}{(b_1^0-G_2(t))(G_1(t)-a_2^0)(b_1-a_2)}, \\
\dot{b_1}< & \frac{1}{(b_1^0-G_2(t))(G_1(t)-a_2^0)(a_2-b_1)},
\end{align*}
or
$$
(b_1-a_2) (\dot{b_1}-\dot{a_2})=\frac{1}{2}\, \frac{d}{dt}\, (b_1-a_2)^2<-\frac{3}{(b_1^0-G_2(t))(G_1(t)-a_2^0)}.
$$
Thus, a collision will occur by time $t=T$ if
$$
\int_0^T \frac{3}{(b_1^0-G_2(t))(G_1(t)-a_2^0)}\, dt > \frac{(b_1^0-a_2^0)^2}{6}.
$$
Since $G_1(t)\sim t^{1/4}$,  $G_2(t)\sim t^{1/4}$ as  $ t\to \infty$, the integral in the left-hand side diverges as $T\to + \infty$, which proves that there will always be a collision in a finite time $T$. Observe that function $F$ in \eqref{def:F} is well-defined in the whole interval $(0, T)$, and that the integrand in \eqref{def:F} is, up to a constant, the analytic continuation of the density of $\lambda_t$, see \eqref{(A,B)-representation}. Using \eqref{derivativesMeasure} we conclude that
\begin{equation*}
F'(t) =-  \int_{a_2(t)}^{b_2(t)} \frac{ds}{\sqrt{A(s)} }< 0\,.
\end{equation*}
However, at the collision time $T$,
$$
F(T)=\int_{a_2}^{b_2} \sqrt{(x-a_1)(x-a_2)} (x-b_1)(x-b_2)\,dx<0,
$$
which shows that there is a unique time $T_1<T$ for which $F(T_1)=0$.

From the positivity of the measure $\lambda_t$ and expression \eqref{R-representation}  it is easy to conclude that all roots of $R_t$, which are not endpoints of the support, need to be double. Hence, for $t> T_1$,  the rightmost double root $b_2$ splits into a pair of simple real roots $a_3$ and $a_4$, giving rise to formula  \eqref{formulaPhase2}. We apply again Theorem \ref{thm:Dynamical System-t} with $h(x)=x-\zeta$, which yields \eqref{ode2}.

Observe that it is not straightforward to deduce the sign of $\dot{b_1}$ from
$$
\dot{b_1}   =  \frac{b_1-\zeta }{A(b_1) }.
$$
Taking into account the initial values \eqref{initial1} we see that  for a small $\varepsilon>0$ and $T_1<t<T_1+\varepsilon$, $\zeta$ is close to $a_3$, so that $b_1<\zeta$. This implies that immediately after the birth of a cut ($t=T_1$), $\dot{b_1}<0$, so that point $b_1$ still moves to the left ``by inertia''. In that range of time, $a_2$ and $a_3$ (and in consequence, also $b_1$ and $\zeta$) are in the collision course, and collision occurs in finite time, when all these four points merge \emph{simultaneously}. This critical time $T_2>T_1$ can be characterized by the appearance of a quadruple root $b^{(2)}$ of $R_{T_2}$ inside $S_{T_2}$, so that the system \eqref{system2nd} is valid.

Taking into account the monotonicity of $S_t$, we see that for $t>T_2$ the quadruple root $b^{(2)}$ of $R_{t}$ splits into two complex-conjugate roots
 $b_1$ and $\overline{b_1}$, and formulas \eqref{phase3} hold. 
 
Adding the equations \eqref{odeB} for $b_k$'s ($b_2=\overline{b_1}$) we obtain
\begin{equation}
\label{ode1bis}
\frac{d}{dt} \, (\Im b_1)^2   = -\Re \frac{1 }{A(b_1)}=-\frac{\Re A(b_1)}{|A(b_1)|^2}.
\end{equation}
Observe that $\Re A(z)=0 $ is an equation of an ``East-West opening'' rectangular (or equilateral) hyperbola $\Gamma=\Gamma_t$ with its vertices  at  $a_1$ and $a_4$, and $\{z\in \C:\, \Re A(z)<0 \}$ corresponds to the connected component $U_t$ of its complement in $\C$ containing the segment joining $a_1$ and $a_4$.

\begin{figure}[htb]
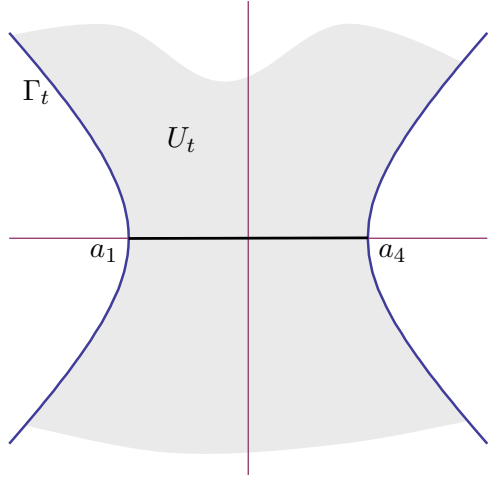

\centering \begin{overpic}[scale=0.43]%
{Figure1}%
 \put(17,46){$a_1 $}
 \put(77,46){$a_4 $}
 \put(3,79){$\Gamma_t $} 
  \put(33,69){$U_t $}
\end{overpic}
\caption{Hyperbola $\Gamma_t$ and the domain $U_t$ (shaded).}
\label{fig:hyperbola}
\end{figure}

Hence, we conclude from \eqref{ode1bis} that
\begin{equation} \label{signb1}
\frac{d}{dt} \, \Im (b_1)>0 \quad \Leftrightarrow \quad b_1\in U_t.
\end{equation}	
Moreover,  the monotonicity of the support $S_t$ (or equivalently, the fact that $\dot{a_1}<0<\dot{a_4}$) implies that
$$
\tau_1<\tau_2 \quad \Rightarrow \quad U_{\tau_1}\subset U_{\tau_2}.
$$

Clearly, for $t>T_2$,  $b_1\in U_t$, so that in this range of $t$, $\Im (b_1)$ grows  monotonically, and we have a one-cut case for all $t>T_2$.

Finally, since by assumptions \eqref{newexternalfield1}, $\varphi(x)>0$ for all $x\in \R\setminus \{0\}$, using the representation \eqref{phirepre} we conclude that
$$
\lim_{t\to + \infty } (-a_1)=\lim_{t\to + \infty } a_4=+\infty, \quad \bigcup_{t>0} S_t =\R.
$$

Regarding the second limit in \eqref{center_mass}, observe that from equations \eqref{odeB}, taking $b_2=\overline{b_1}$, we get
\begin{equation*}
\label{odere}
\frac{d}{dt}  \left(\frac{a_1+a_4}{2}\right) = -2\,\frac{d}{dt} \, (\Re b_1)   =\frac{2\Re b_1-(a_1+a_4)}{|A(b_1)|^2}.
\end{equation*}
An immediate consequence of these identities is that the centers of masses of the zeros of $A$ and of the zeros of $B$ are always in a collision course.

A comparison of the coefficients at $x^3, x^4$ and $x^5$ in both sides of \eqref{Rcoef} yields the system
\begin{equation}\label{systemph3}
\begin{split}
4\Re b_1 +a_1+a_4 & = 2(\alpha+\beta), \\
6(\Re b_1)^2+2(\Im b_1)^2+4(a_1+a_4)\Re b_1+a_1a_4 & = (\alpha+\beta)^2+2\alpha\beta, \\
4|b_1|^2\Re b_1+6\left((\Re b_1)^2+2(\Im b_1)^2\right)(a_1+a_4)-4a_1a_4\Re b_1& = - 2\alpha\beta(\alpha+\beta)  .
\end{split}
\end{equation}
An assumption that $a_1+a_4=2 \Re b_1$ (collision) in the last two equations in \eqref{systemph3} implies that $2\alpha^2+2\beta^2-5\alpha\beta=0$, which is possible only if $\beta=2\alpha$. This is the  symmetric case  not considered here.

Hence, we conclude that in our situation there exist the limits
$$
\lim_{t\to + \infty } \frac{a_1+a_4}{2}= \lim_{t\to + \infty } \Re b_1.
$$
By using this in the first identity in \eqref{systemph3} we conclude the proof of \eqref{center_mass}.
\end{proof}

Next, we turn to Case 2. 

\begin{theorem}\label{thm:mainresult2}
Consider the external field given by \eqref{expressionPhi} with $\beta=\overline \alpha$, and
with $ \alpha $ in the first quadrant. Then:  
\begin{enumerate}	
\item[(a)]  if 
$$
\arg (\alpha)\geq \arg (1+ i \sqrt{s}) ,
$$
where $\sqrt{s}=0.27872057\dots$, and $s$ is the only positive root of the equation \eqref{identityfors}, then for all $t>0$, polynomials $A$ and $B$ in \eqref{Rcoef} have the form 
\begin{equation} \label{phase1case2}
A(x)=(x-a_1)(x-a_2), \quad B(x)=(x-b_1)(x-\overline{b_1}), \quad \Im b_1>0,
\end{equation}	
with    $a_1<0<a_2$, and $S_t$ consists of a single interval $[a_1, a_2]$ ($\varphi \in \frak F$, one-cut case). 

If $\arg (\alpha)\geq \pi/4$, then functions $-a_1$, $a_2$ and $\Im  b_1 $ grow monotonically with $t$ from their initial positions $0$, $0$ and $\Im \alpha$, respectively, to $+\infty$. 
Otherwise (i.e., if $\arg (1+ i \sqrt{s}) \leq  \arg (\alpha)<  \pi/4$), there exists a critical value $T_0>0$ such that $\Im b_1 $ is monotonically decreasing for $0<t<T_0$,  monotonically increasing for $t>T_0$, and $\Im(b_1(t=T_0))\geq 0$. 
 
\item[(b)] if 
$$
0<\arg (\alpha)< \arg (1+ i \sqrt{s}) ,
$$
 then there exist three critical values $0<T_0<T_1<T_2$ such that:
\begin{itemize}
\item for $0<t<T_0$, polynomials $A$ and $B$ in \eqref{Rcoef} have the form \eqref{phase1case2} with $a_1<0<a_2$; $\Im b_1 $ is  monotonically decreasing for $0<t<T_0$. At the critical value $T_0$,  $  b^{(0)}:=b_1(t=T_0) \in \R\setminus (a_1, a_2) $, which is a birth of new local extrema; without loss of generality, we assume $ b^{(0)}>a_2(t=T_0)$. During this phase,  $S_t$ consists of a single interval $[a_1, a_2]$.
\item for $T_0<t<T_1$, polynomials $A$ and $B$ in \eqref{Rcoef} have the form \eqref{phase1}, with $a_1<0<a_2<b_1 <b_2$, and $S_t=[a_1, a_2]$ (still one-cut case).

At $t=T_1$, function $F$ defined in \eqref{def:F} satisfies $F(T_1)=0$, and $b_2(t=T_1)=b^{(1)}$ is a singular point of type I (``birth of a cut'').

\item for $T_1<t<T_2$, Phase 2 described in Theorem \ref{thm:mainresult}, takes place. Second transition, also described in Theorem  \ref{thm:mainresult}, occurs at $t=T_2$ by the same mechanism (collision of four zeros of $R_t$). This situation corresponds to the two-cut case.

\item finally, for $t>T_2$ we have Phase 3 as described in Theorem \ref{thm:mainresult}. We are back in the one-cut case.
\end{itemize}
\end{enumerate}
\end{theorem}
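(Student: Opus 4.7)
The plan is to track the parameters $a_1, a_2, b_1, \overline{b_1}$ from the initial configuration $a_1(0+) = a_2(0+) = 0$, $b_1(0+) = \alpha$ provided by \eqref{limitT0}. In the one-cut regime \eqref{phase1case2}, Theorem \ref{thm:Dynamical System-t} yields
$$
\dot a_k = \frac{2}{A'(a_k)\, B(a_k)}, \qquad \dot b_1 = \frac{1}{(b_1 - \overline{b_1})\, A(b_1)},
$$
and, exactly as in the proof of Theorem \ref{thm:mainresult} (see \eqref{ode1bis}),
$$
\frac{d}{dt}\,(\Im b_1)^2 = -\frac{\Re A(b_1)}{|A(b_1)|^2}.
$$
Thus $\Im b_1$ is non-decreasing precisely when $b_1 \in \overline{U_t}$, the closure of $U_t := \{z : \Re A(z) < 0\}$, bounded by the equilateral hyperbola $\Gamma_t$ with vertices at $a_1, a_2$. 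Since $\dot a_1 < 0 < \dot a_2$ throughout the one-cut regime, the family $\{U_t\}$ is monotonically expanding.

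For case (a) with $\arg \alpha \geq \pi/4$, the degenerate hyperbola $\Gamma_{0+}$ consists of the two lines $\Im z = \pm \Re z$, so $\alpha \in \overline{U_{0+}}$. A contact-point analysis at a hypothetical first exit shows that the outward motion of $\Gamma_t$ (driven by $\dot a_1 < 0 < \dot a_2$) dominates any tangential drift of $b_1$, so $b_1$ cannot leave $\overline{U_t}$; hence $\Im b_1$ is monotone non-decreasing, $a_1 \to -\infty$, $a_2 \to +\infty$, and the one-cut regime persists with $\Im b_1 \to \infty$. For the subcase $\arg(1+i\sqrt s) \leq \arg \alpha < \pi/4$ we start outside $\overline{U_{0+}}$, so $\Im b_1$ decreases initially; the evolution then remains one-cut if and only if $b_1$ meets $\Gamma_t$ (producing a unique minimum of $\Im b_1$ at some $T_0 > 0$) before reaching $\R$. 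The borderline event is precisely a type III singularity as analysed in Subsection \ref{subs:III}: $b_1$ and $\overline{b_1}$ coalesce with the endpoint $a_2$ on $\R$ at a time $T$ for which $R_T(x) = (x-a_1)(x-a_2)^5$. Matching this factorisation against $R_T = (\varphi')^2 - 2T x^2 - d_T x - e_T$ yields an algebraic system which, after elimination, reduces to the single equation $32 s^3 - 17 s^2 + 14 s - 1 = 0$ in the variable $s = (\Im\alpha/\Re\alpha)^2 = sl(\varphi)$, confirming the threshold $\arg \alpha = \arg(1 + i\sqrt s)$ with $s$ as in \eqref{identityfors}.

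For case (b), $\arg \alpha < \arg(1 + i\sqrt s)$ implies that $b_1$ reaches $\R$ strictly before $\Gamma_t$ at a finite $T_0$, at a position $b^{(0)} \in \R \setminus [a_1(T_0), a_2(T_0)]$; by the convention $\Im \alpha > 0$ we may take $b^{(0)} > a_2(T_0)$. By Subsection \ref{subs:IV}, the colliding pair $b_1, \overline{b_1}$ splits into two distinct real simple zeros $b_- < b_+$ of $B$. After relabelling $b_1 \leftarrow b_-$, $b_2 \leftarrow b_+$, we arrive at the configuration $a_1 < 0 < a_2 < b_1 < b_2$ of Phase 1 of Theorem \ref{thm:mainresult} (with nonzero initial values of $a_1, a_2$), still satisfying the ODEs \eqref{ode1} and the inequalities \eqref{signs1}. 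The remainder of the argument of Theorem \ref{thm:mainresult} then applies verbatim: the function $F$ of \eqref{def:F} is positive and strictly decreasing, so $F(T_1) = 0$ at a finite $T_1 > T_0$ (type I bifurcation), the two-cut dynamics \eqref{ode2} govern the interval $T_1 < t < T_2$, and the four zeros $a_2, b_1, a_3, b^{(1)}$ coalesce at $T_2$ into a quadruple zero $b^{(2)}$ described by the algebraic system \eqref{system2nd} (type II), after which Phase 3 ensues.

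The main obstacle will be the rigorous verification of the contact-point monotonicity argument for case (a), that is, showing that $b_1$ cannot escape $\overline{U_t}$ once inside, together with the continuous-dependence step that propagates the type III coincidence $\arg\alpha = \arg(1+i\sqrt s)$ to the strict borderline separating the three regimes in the theorem.
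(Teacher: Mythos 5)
Your proposal follows essentially the same route as the paper: the dynamical system from Theorem \ref{thm:Dynamical System-t}, the hyperbola criterion \eqref{signb1} combined with the monotone expansion of $U_t$, the identification of the critical slope $s$ via the quintic-factor (type III) configuration $R_{T_0}(x)=(x-a_1)(x-b_1)^5$ leading to \eqref{identityfors}, and the reduction of case (b) to Phases 1--3 of Theorem \ref{thm:mainresult}. The contact-point obstacle you flag at the end is also left implicit in the paper (the persistence of $b_1\in U_t$ once it enters is asserted rather than verified there), and it closes cleanly: at a hypothetical first-exit time $\Re A(b_1)=0$, the velocity $\dot b_1$ is real and directed toward the strip $a_1<\Re z<a_2$, while the expansion of $\Gamma_t$ (driven by $\dot a_1<0<\dot a_2$) contributes a second nonpositive term, so $\tfrac{d}{dt}\Re A(b_1)<0$ and $b_1$ cannot cross $\Gamma_t$ --- note it is actually the motion of $b_1$ itself, not only the spreading of $\Gamma_t$, that provides the strictly negative contribution.
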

\begin{proof}
As in the proof of  Theorem \ref{thm:mainresult}, using \eqref{limitT0} and representation \eqref{phirepre} we conclude that there exists $\varepsilon>0$ such that for $0<t<\varepsilon$ we are in the one-cut case, and formulas \eqref{phase3} hold. 
Using the equivalence \eqref{signb1} we conclude, in particular, that if $\alpha\in U_0$, that is, if $|\Re \alpha|\leq \Im \alpha$, then $\Im (b_1)$ grows monotonically with $t$, varying from $\Im (\alpha)$ to $+\infty$ as $t$ travels $(0,+\infty)$. Clearly, in this situation we have a one-cut case for all $t>0$, which establishes the first part of (a).

As it was discussed before, for a fixed $A$, the solutions $b_1$ and $\overline{b_1}$ of \eqref{ode1bis} exhibit collision in a finite time. The second part of Case (a) corresponds to the situation when for a $T_0>0$ before the collision, $b_1\in \Gamma_{T_0}$ (see Figure~\ref{fig:hyperbola}).

Finally, consider the case when $b_1$ hits the real line in a time $T_0$. Clearly, for $t<T_0$, $b_1\notin U_t$, so that $b^{(0)}=b_1(t=T_0)\in \R\setminus (a_1^0, a_2^0)$, where $a_k^0:=a_k(t=T_0)$. Assume first that $b^{(0)} \neq a_k^0$, $k=1,2$. At $t=T_0$ we have that $A$ and $B$ in \eqref{Rcoef} have the form
\begin{equation*} 
A(x)=(x-a_1)(x-a_2), \quad B(x)=(x-b_1)^2,
\end{equation*}	
with (without loss of generality)  $a_1<0<a_2<b_1$. As $t$ becomes greater than $T_0$, the only possibility is that $b_1$ splits into two real zeros, so we are left in the situation of Phase 1 of Case 1 (see Theorem \ref{thm:mainresult}). From this point the evolution of $A$, $B$ and $S_t$ follows exactly the Case 1.

The boundary between  (a) and  (b) is precisely when
$$
b_1(t=T_0)=a_2(t=T_0),
$$
that is, when $R_{T_0}$ has a zero at $b_1$ of order 5; using the expression in \eqref{Rcoef} we get that for $t=T_0$,
\begin{equation*} 
R_t(x) = x^2(x-\alpha)^2(x-\overline{\alpha})^2 - 2t\, x^2-d_t\, x-e_t =(x-a_1) (x-b_1)^5.
\end{equation*}
We can use the identities \eqref{system2nd} from Lemma \ref{lemma:equations} with $a=a_1$, $b=c=b_1$ and $\beta=\overline{\alpha}$. Moreover, by the homogeneity of these identities (see Remark \ref{remk1}) we may assume that  $\alpha=1+i \sqrt{s}$, with $ s>0$: the boundary between case (a) and case (b) will be given by the ray $u \alpha$, with $u>0$.

Thus, from  \eqref{system2nd} we obtain
\begin{equation*}
\begin{split}
a_1+5b_1  & = 4, \\
5b_1(a_1+2b_1)  & = 2 (3+   s ) , \\
5b_1^2(a_1+b_1) & = 2(1+s), \\
2t=  (1+s )^2 & -5b_1^3(2a_1+b_1).
\end{split}
\end{equation*}
These equations must have a solution with $a_1<0<b_1$ for $t>0$. Solving the first two for $a_1$ and $b_1$ and replacing the solutions in the third equation (preferably, using a symbolic algebra software) we obtain
$$
4 r^3 +15r^2 -200=0, \quad r=\sqrt{10-30s^2},
$$
which is equivalent to \eqref{identityfors}. Direct calculations show that the discriminant of the cubic polynomial above is negative, so it  has a unique (positive) root $s= 0.077685\dots$, which satisfies identities \eqref{eqfors1}--\eqref{eqfors2}, and two complex conjugate roots. 
Finally, a substitution of the obtained values for $s, a_1, b_1$ in the last equation renders $t = 0.0339206\dots>0$.
\end{proof}

We conclude this section with a number of remarks that we consider relevant.

The results from Theorem \ref{thm:odeBis} allow us to explore the dependence of the support $S_t$ (and of the corresponding equilibrium measure $\lambda_t$) from the rest of the parameters of the problem, in our setting, from the position of the critical points $\alpha$ and $\beta$. For instance, when $S_t$ consists of a single interval (``one-cut case'') differential relations \eqref{newDiffRelat2_1}--\eqref{newDiffRelat2_3} apply. Observe e.g.~that in the case when the center of masses of $\alpha$ and $\beta$ increases (hence, $t_3$ in \eqref{expressionPhi} decreases), both endpoints of $S_t$  move to the right.

In a certain sense, the considerations above show that the most general situation corresponds to Case 2 with $\alpha$ lying in the complex plane but below the critical ray emanating from the origin and passing through $1+i \sqrt{s}$ (or, in the terminology introduced at the beginning of this section, when $0<sl(\varphi)< s $). This and only this situation exhibits all possible transitions occurring in the quartic case. 

Observe that the external field \eqref{expressionPhi} with $\beta=\overline \alpha$, and
with $ \alpha $ in the first quadrant is convex if and only if $\arg (\alpha) \geq \pi/6> \arg (1+ i \sqrt{s})$,  where $s$ is the critical value given by \eqref{eqfors1}--\eqref{eqfors2}. Hence, the persistence of the one-cut case for all $t>0$ under assumptions of convexity (see e.g.~\cite{Saff:97}) is a consequence of the statement above.

\begin{example}\label{rem:bleher}
Let us return to the analysis of  Bleher and Eynard in  \cite{MR1986409}, where they considered the external field of the form
\begin{equation} \label{Eynard}	
\varphi(x;c_1) =  \frac{x^4}{4}-\frac{4c_1x^3}{3}+(2c_1^2-1)x^2+8c_1x , \quad \text{with } c_1\in (-1,1).
\end{equation}
Notice that $c_1=0$ gives a particular case of the so-called planar diagram model \cite{MR0471676}.

This family always exhibits a second transition (see Theorem \ref{thm:mainresult}) or a singular point of type II for $t=T_2=1+4c_1^2$, in such a way that $-2=a_1(t=T_2)<b^{(2)}<a_2(t=T_2)=2$. According to our analysis, for $c_1\neq 0$ there is also a first transition (birth of a cut) at $T_1<T_2$, not described in \cite{MR1986409}, but predicted in \cite{MR2453314} and found numerically in \cite{MR2629605}.

Expression \eqref{Eynard} is an alternative parametrization of the quartic external field; the range  $c_1\in (-1,1)$ covers all the cases when a two-cut $S_t$ arises, described in Theorem \ref{thm:characterization}. However, with the purpose of characterizing the existence of phase transitions we find the description given in our   Theorem \ref{thm:characterization} more transparent. 
 
The analysis in \cite{MR1986409} was extended in \cite{Marchal:2011fk} to the case when $\varphi$ is a polynomial of even degree (greater than $4$) such that in the critical time $t=T_2$   the density of the equilibrium measure has a zero of higher order $2m$, $m\geq 1$.
\end{example}

We finish this section observing that using the system of equations \eqref{system2nd}  we can find algebraic equations for the quadruple critical points $b^{(0)}$ and $b^{(2)}$ and for the corresponding endpoints of the support $S^{T_0}$ and $S^{T_2}$, respectively (so that these values  are algebraic functions of $\alpha$ and $\beta$, and hence, of the coefficients of the external field). Indeed, solving the first two equations in \eqref{system2nd} for $a$ and $c$, we get
\begin{equation} \label{a&c}	
\begin{split}
a &= \alpha+\beta-2b  -\sqrt{4 b (\alpha
   +\beta )-2
   \alpha  \beta -6 b^2} , \\
c &= \alpha+\beta-2b  +\sqrt{4 b (\alpha
   +\beta )-2
   \alpha  \beta -6 b^2} ,
\end{split}
\end{equation}
 and replacing it in the third one yields
\begin{equation}\label{pb}
p(b)\,=\,10 b^3-10 (\alpha+\beta)b^2+2 ((\alpha+\beta)^2+2\alpha\beta) b - \alpha\beta(\alpha+\beta)=0.
\end{equation}
If $0<\alpha<\beta\leq 2\alpha$, this is equivalent to finding real roots of the polynomial 
\begin{equation}\label{positiveroot}
p^*(x)=10 x^3-10 (1+u)x^2+2 ((1+u)^2+2  u) x - u(1+u)
\end{equation}
for the values of the parameter $u\in (1,2]$. Its discriminant (easily found with the help of a computer algebra system),
$$
20 \left(4 u^6-12 u^5+29 u^4-38 u^3+29
   u^2-12 u+4\right),
$$
is positive for $u\in (1,2]$, and the roots of $p^*$ for $u=1$ are  $(5\pm\sqrt{5})/10$ and $1$. Hence, this polynomial has three positive simple roots in the indicated range of $u$. However, only the middle one guarantees that 
$$
 2 x (1    +u )- u -3 x^2\geq 0,
$$
which, according to \eqref{a&c}, is a necessary condition for having real solutions  $a$ and $c$ in \eqref{system2nd}. As observed, for $u=1$ this root is $(5+\sqrt{5})/10\approx 0.7236$, while for $u=2$ it is $1$.

Hence, in the case when $0<\alpha<\beta\leq 2\alpha$, the recipe for finding the (unique) critical point is as follows: with $u=\beta/\alpha\in (1,2]$, find the  root $x^{*}$ of \eqref{positiveroot} lying in 
$$
\left( \frac{5+\sqrt{5}}{10}, 1\right],
$$
and take  $b^{(2)}= \alpha \, x^{*}$. Then the endpoints of the support are obtained by replacing  $b=b^{(2)}$ in \eqref{a&c}, and taking $a_1(t=T_2):=a \leq b^{(2)} \leq a_2(t=T_2):=c$; the value of $T_2$ is finally computed from the fourth equation in \eqref{system2nd}. In this case,  as we have seen, $x=b^{(2)}$ is a singular point of type II (zero of the density of the equilibrium measure).

Let us consider the case when $\alpha $ is in the first quadrant, and  $\overline{\alpha}=\beta$. Denoting $\zeta:=\Im(\alpha)/\Re(\alpha)> 0$ and dividing   \eqref{pb} by $(\Re(\alpha))^3$, we arrive at the polynomial equation
\begin{equation}\label{pb2}
p^*(x)=5 x^3-10 x^2+2 (3  +\zeta^2) x - (1+ \zeta^2)=0,
\end{equation}
whose discriminant, as natural, is given by the left hand side of \eqref{identityfors} with $s$ replaced by $s^2$. Thus, for $\zeta>\sqrt{s}$, with $s$ defined in \eqref{eqfors1}--\eqref{eqfors2}, polynomial $p^*$ has only one real root, which yields non-real values of $a$ and $c$ in \eqref{a&c}. In this case, no quadruple critical point appears.

If on the contrary, $0<\zeta<\sqrt{s}$, then $p^*$ has three positive roots; only two of them give real values of $a$ and $c$ in \eqref{a&c}.

Summarizing, in the case when $0<\zeta:=\Im(\alpha)/\Re(\alpha) < \sqrt{s}$, the recipe for finding two critical points is as follows:  find the real roots $x^{*}$ of \eqref{pb2}, take  $b= \Re(\alpha) \, x^{*}$ and replace it in \eqref{a&c}. If $a<c<b$, then $a=a(t=T_0)$, $b=b(t=T_0)=b^{(0)}$ and $c=c(t=T_0)$; the value of $T_0$ is  computed from the fourth equation in \eqref{system2nd}.

If on the contrary, $a<b<c$, then $a=a(t=T_2)$, $b=b(t=T_2)=b^{(2)}$ and $c=c(t=T_2)$; the value of $T_2$ is  computed again from the fourth equation in \eqref{system2nd}.

\section*{Acknowledgements}

The first  and the second authors have been supported in part by the research projects MTM2011-28952-C02-01 (A.M.-F.) and MTM2011-28781 (R.O.) from the Ministry of Science and Innovation of Spain and the European Regional Development Fund (ERDF). Additionally, the first author was supported by the Excellence Grant P09-FQM-4643, the research group FQM-229 from Junta de Andaluc\'{\i}a, and by Campus de Excelencia Internacional del Mar (CEIMAR) of the University of Almer\'{\i}a. 
 
A.M.-F.\ and E.A.R.\ also thank AIM workshop ``Vector equilibrium problems and their applications to random matrix models'' and useful discussions in that research-stimulating environment. We gratefully acknowledge also several constructive remarks from Razvan Teodorescu (University of South Florida, USA),  Arno Kuijlaars (University of Leuven, Belgium) and Tamara Grava (SISSA, Italy), as well as from the anonymous referee, whose detailed critical reports were a valuable contribution.

\def\cprime{$'$} \def\cprime{$'$}

\obeylines
\texttt{
A. Mart\'{\i}nez-Finkelshtein (andrei@ual.es)
Department of Mathematics
University of Almer\'{\i}a, SPAIN, and
Instituto Carlos I de F\'{\i}sica Te\'{o}rica y Computacional
Granada University, SPAIN
\medskip
E.~A.~Rakhmanov (rakhmano@mail.usf.edu)
Department of Mathematics,
University of South Florida, USA
\medskip
R.~Orive (rorive@ull.es)
Department of Mathematical Analysis,
University of La Laguna
Tenerife, Canary Islands, SPAIN
}

\end{document}